\newtheorem{prop}{Proposition}
\newtheorem{theorem}{Theorem}
\newtheorem{cor}{Corollary}
\newtheorem{conj}{Conjecture}
\newtheorem{lemma}{Lemma}
\newtheorem{Remark}{Remark}
\theoremstyle{definition}
\def\sP{\mathcal{P}}
\def\supp{\mbox{supp}}
\def\Hilb{\mbox{Hilb}}
\def\tr{\mbox{tr}}
\def\fP{\mathsf{P}}
\def\fQ{\mathsf{Q}}
\def\O{\mathcal{O}}
\def\catC{\mathcal{C}}
\def\la{\langle}
\def\ra{\rangle}
\def\k{\Bbbk}
\def\gl{\mathfrak{gl}}
\def\sl{\mathfrak{sl}}
\def\H{\mathcal{H}}
\def\h{\mathfrak{h}}
\def\P{\mathbb P}
\def\R{\mathbb R}
\def\Z{\mathbb Z}
\def\N{\mathbb N} 
\def\C{\mathbb C}
\def\g{\mathfrak{g}}
\def\G{\Gamma}
\def\Ext{\mathrm{Ext}}
\def\End{\mathrm{End}}
\def\Sym{\mathrm{Sym}}
\def\adj{\mathrm{adj}}
\def\id{\mathrm{id}}
\def\dmod{{\mathrm{-mod}}} 
\def\dgmod{{\mathrm{-gmod}}} 
\newcommand{\Hom}{{\rm Hom}}
\newcommand{\Ind}{{\rm{Ind}}}
\newcommand{\Res}{{\rm{Res}}}
\title{Heisenberg categorification and Hilbert Schemes}
\begin{document} 
\setcounter{tocdepth}{1}

\author{Sabin Cautis}
\email{scautis@math.columbia.edu}
\address{Department of Mathematics\\ Columbia University \\ New York, NY}

\author{Anthony Licata}
\email{amlicata@math.stanford.edu}
\address{Department of Mathematics\\ Stanford University \\ Stanford, CA}

\begin{abstract} 
Given a finite subgroup $\G \subset SL_2(\C)$ we define an additive 2-category $\H_\G$ whose Grothendieck group is isomorphic to an integral form $\h_\G$ of the Heisenberg algebra. We construct an action of $\H_\G$ on derived categories of coherent sheaves on Hilbert schemes of points on the minimal resolutions $\widehat{\C^2/\G}$. 
\end{abstract}

\maketitle
\tableofcontents

\section{Introduction} 

The well-documented relationship between the Heisenberg algebra and Hilbert schemes of points on a surface $S$ involves an action of the Heisenberg algebra on the cohomology $\oplus_n H^*(\Hilb^n(S))$, \cite{N1,G}. Inspired by other constructions from geometric representation theory, one expects this Heisenberg action on cohomology to lift to algebraic $K$-theory, though in general lifting this action is not completely straightforward (see \cite{ScV} and \cite{FT} for $K$-theoretic constructions when the surface is $\C^2$).  Of particular interest in representation theory is the case when the surface $S= \widehat{\C^2/\G}$ is the minimal resolution of the quotient of $\C^2$ by a non-trivial finite subgroup $\G \subset SL_2(\C)$; in this case one expects a Heisenberg action on $K$-theory to be the basic building block for a $K$-theoretic construction of the basic representation of the corresponding affine Lie algebra.

After $K$-theory, however, one can investigate yet another level of structure by considering the derived categories of coherent sheaves $\oplus_n DCoh(\Hilb^n(S))$ on these Hilbert schemes. Then one expects a Heisenberg algebra action on $K$-theory to be the shadow a richer categorical Heisenberg action on these triangulated categories. 
 
More precisely, the Hilbert schemes $\{\Hilb^n(S)\}_{n \in \N}$ naturally form a 2-category $\Hilb(S)$:
\begin{enumerate}
\item The objects of $\Hilb(S)$ are the natural numbers $\N$;
\item The 1-morphisms from $m$ to $n$ are compactly supported objects in $DCoh(\Hilb^m(S) \times \Hilb^n(S))$.  Composition of 1-morphisms is given by convolution.
\item For $\sP_1, \sP_2 \in DCoh(\Hilb^m(S) \times \Hilb^n(S))$, the space of 2-morphisms from $\sP_1$ to $\sP_2$ is the space $\Ext^*(\sP_1, \sP_2)$.  The composition of 2-morphisms is the natural composition of $\Ext$s.
\end{enumerate}
So to give a categorical Heisenberg action one should define both a Heisenberg 2-category $\H_S$ and a 2-functor $\H_S \rightarrow \Hilb(S)$. The Grothendieck group of $\H_S$ should be isomorphic to the Heisenberg algebra so that passing to $K$-theory one gets a Heisenberg action on $\oplus_n K_0(\Hilb^n(S))$. In this paper we construct such a 2-category $\H_S = \H_\G$ and a 2-functor $\H_\G \rightarrow \Hilb(S)$ when $S = \widehat{\C^2/\G}$ is the minimal resolution of the quotient of $\C^2$ by a non-trivial finite subgroup $\G \subset SL_2(\C)$ (Theorems \ref{thm:main1} and \ref{thm:main2}). In particular, we obtain a (quantum) Heisenberg algebra action on $\oplus_n K_0(\Hilb^n(S))$.  (We also note here that in the special case when $S=\C^2$ a natural $\C^\times$ action together with localization techniques have already been used to define Heisenberg algebra actions on localized equivariant K-theory, \cite{FT,ScV}). 

\subsection{Analogy with Kac-Moody Lie algebra actions} 

There are clear parallels between our story for Heisenberg algebras $\h_\G$ and the categorification of Kac-Moody algebras $U(\g)$. In \cite{N2} and \cite{N3} Nakajima constructed actions of the enveloping algebra $U(\g)$ on the cohomology and K-theory of quiver varieties, generalizing earlier work of Ginzburg for $U(\sl_n)$ \cite{CG}. On the other hand, Khovanov-Lauda \cite{KL1,KL2,KL3} define a 2-category $\mathcal{U}(\g)$ whose Grothendieck group is isomorphic to (an integral version of) the quantized enveloping algebra. There is also independent and very similar work of Rouquier \cite{R} in the same direction. 

In \cite{CKL2} the authors, together with Joel Kamnitzer, construct a ``geometric categorical $\g$ action'' on the derived category of coherent sheaves on quiver varieties. This action conjecturally induces an action of the 2-categories $\mathcal{U}(\g)$ on the derived categories of quiver varieties. The 2-categories $\mathcal{U}(\g)$ and these actions on the derived categories of quiver varieties are analogous to our Heisenberg 2-categories $\H_\G$ and their action on the derived categories of Hilbert schemes. 

\subsection{The 2-category $\H_\G$}

By the McKay correspondence, isomorphism classes of finite subgroups $\G \subset SL_2(\C)$ are parametrized by simply-laced affine Dynkin diagrams (i.e. diagrams of type $\widehat{A}_n$, $\widehat{D}_n$, $\widehat{E}_6$, $\widehat{E}_7$ and $\widehat{E}_8$).  Each of these affine Dynkin diagrams, in turn, gives rise to a presentation for a Heisenberg algebra $\h_\G$, which we describe in Section \ref{sec:h_Gdef}, and a presentation of a 2-category $\H_\G$, which we define in Section \ref{sec:catH}.  The 2-morphisms in $\H_\G$ are defined using a planar graphical calculus reminiscent of the graphical calculus used by Khovanov-Lauda \cite{KL2} in their categorification of quantum groups. Our first theorem (\ref{thm:main1}) says that the Grothendieck group $K_0(\H_\G)$ is isomorphic to the Heisenberg algebra $\h_\G$.  In particular, the indecomposable 1-morphisms of $\H_\G$ descend to a ``canonical basis" of the Heisenberg algebra.

\subsection{Actions of $\H_\G$} 

A direct relationship between the finite group $\G \subset SL_2(\C)$ and the Heisenberg algebra 
$\h_\G$ can be formulated in two closely related ways (one algebraic and the other geometric).  In the algebraic setting one can consider the wreath products $\G^n \rtimes S_n$ of $\G$ with the symmetric group $S_n$.  Then, following \cite{FJW1}), one constructs the basic representation of $\h_\G$ on the Grothendieck groups of the module categories $\C[\G^n \rtimes S_n] \dmod$. Geometrically, following \cite{N1,G}, one considers the Hilbert schemes $\Hilb^n(\widehat{\C^2/\G})$ and constructs a representation of $\h_\G$ on their cohomology. 

We will work in a setting that shares features with both the algebraic and geometric constructions above by considering directly the bounded derived categories $D(A_n^\G \dgmod)$ of finitely generated graded $A_n^\G$-modules. Here $A_n^\G$ is the algebra 
$$A_n^\G := [(\Sym^*(V^\vee) \rtimes \G) \otimes (\Sym^*(V^\vee) \rtimes \G) \otimes \dots \otimes (\Sym^*(V^\vee) \rtimes \G)] \rtimes S_n$$
which inherits the natural grading from $\Sym^*(V^\vee)$ (here $V = \C^2$).   

We recall why $D(A_n^\G \dgmod)$ is equivalent to derived categories of Hilbert schemes in Section \ref{sec:geometry} and explain the relationship to $\C[\G^n \rtimes S_n] \dmod$ in Section \ref{sec:wreath}. Our second main theorem (Theorem \ref{thm:main2}) constructs a natural action of the Heisenberg category $\H_\G$ on $\oplus_n D(A_n^\G \dgmod)$.

The Grothendieck group $K_0(A_n^\G \dmod)$ is isomorphic to $K_0(\C[\G]^n  \rtimes S_n \dmod)$. On the other hand, $K_0(A_n^\G \dmod) \cong K_0(\mbox{Hilb}^n(\widehat{\C^2/\G}))$ is isomorphic to the cohomology of $\mbox{Hilb}^n(\widehat{\C^2/\G})$. Thus the category $\oplus_n D(A_n^\G \dmod)$ gives a common categorification of the spaces used by Nakajima, Grojnowski and Frenkel-Jing-Wang.

\subsection{Organization}

\begin{itemize}
\item Section \ref{sec:hei} defines the (quantum) Heisenberg algebra $\h_\G$. Our choice of generators for $\h_\G$ differs somewhat from the choices made elsewhere in the literature. 
\item Section \ref{sec:catH} defines the 2-category $\H_\G$ which categorifies $\h_\G$. We also define a map from $\h_\G$ to the Grothendieck group of $\H_\G$. This map is shown to be an isomorphism in Section \ref{sec:K-theory} (Theorem \ref{thm:main1}).
\item Sections \ref{sec:action} and \ref{sec:action2} are concerned with defining an action of $\H_\G$ on $\oplus_n D(A_n^\G \dmod)$. 
\item In Section \ref{sec:graphical2} we study a slightly different 2-category $\H^\G$ using an alternative simplified graphical calculus. $\H^\G$ is ``Morita equivalent'' to $\H_\G$ in the sense that the spaces of 2-morphisms in $\H^\G$ and $\H_\G$ are Morita equivalent (subsequently the 2-representation theories of $\H^\G$ and $\H_\G$ are equivalent). We construct a natural functor $\eta: \H^\G \rightarrow \H_\G$ which induces an isomorphism at the level of Grothendieck groups (so both $\H^\G$ and $\H_\G$ categorify $\h_\G$). The 2-category $\H^\G$ is of independent interest but another motivation for introducing it is to facilitate  the graphical computations needed to prove Theorem \ref{thm:main1}. 
\item In section \ref{sec:koszul} we sketch a second, simpler (non-derived) action of $\H_\G$, related to the action of Section \ref{sec:action} by Koszul duality. 
\item Section \ref{sec:open} contains a discussion of currently unanswered questions and futher directions.
\item The appendix: most of the paper assumes the Dynkin diagram associated to $\G \subset SL_2(\C)$ is simply-laced, which simplifies notation but fails to cover the case $\G=\Z_2$. In Section \ref{sec:z2} we collect the required definitions for $\G=\Z_2$ (with these modifications the theorems of the paper hold for any nontrivial $\G$). The definition of the Heisenberg category for the trivial subgroup of $SL_2(\C)$ is of interest in its own right, but is not discussed directly in this paper.
\end{itemize}

\subsection{Acknowledgements}
The authors would like to thank Mikhail Khovanov for several useful conversations and for giving us an early version of his paper \cite{K}. Much of this paper is inspired by his work. We would also like to thank Igor Frenkel for sharing with us his unpublished notes with Khovanov and Malkin on categorification and vertex operator algebras \cite{FKM}, and Alistair Savage for useful comments regarding an earlier version of this paper. This project began when both authors were at the program ``Homology Theory of Knots and Links'' at MSRI in 2010. We thank MSRI for its hospitality and great working environment. S.C. was supported by National Science Foundation Grant 0801939/0964439.

\section{Quantum Heisenberg algebras}\label{sec:hei}

\subsection{The McKay correspondence}

Fix an algebraically closed field $\k$ of characteristic zero. Let $\G \subset SL_2(\k)$ denote a finite subgroup (in our discussion this can include $\G = \k^\times$ the Cartan subgroup of diagonal matrices). For notational convenience, the body of the paper assumes that $\G \neq \Z_2$. We deal with this extra case in Appendix \ref{sec:z2}.

Denote by $V$ the standard two dimensional representation of $\G$. Under the McKay correspondence the finite subgroup $\G$ corresponds to an affine Dynkin diagram with vertex set $I_\G$ and edge set $E_\G$.By definition each vertex $i \in I_\G$ is indexed by an irreducible representation $V_i$ of $\G$ and two vertices $i, j \in I_\G$ are joined by $k$ edges where $k$ is the number of times $V_j$ appears as a direct summand of $V_i \otimes V$. Notice that since $V$ is self dual this relation is symmetric in $i$ and $j$. 

For instance, when $G = \Z/n\Z$ then we get the affine Dynkin diagram $\widehat{A}_{n-1}$, which is an $n$ cycle. When $\G = \k^\times$ the associated diagram is the Dynkin diagram $\widehat{A}_\infty$ of the Lie algebra $\sl_\infty$.

Define the \emph{quantum Cartan matrix} $C_\G$ to be the matrix with entries $\la i,j \ra$ indexed by $i,j \in I_\G$ given by
$$ \la i,j \ra =
\begin{cases}
        t + t^{-1} & \text{ if } i=j \\
        -1 & \text{ if } i \ne j \text{ are connected by an edge } \\
	0 & \text{ if } i \ne j \text{ are not connected by an edge }
\end{cases} $$
Note that at $t=1$ the matrix $C_\G$ becomes the extended Cartan matrix of type ADE (this is the McKay correspondence between nontrivial finite subgroups $\G \subset SL_2(\C)$ and simply-laced affine Dynkin diagrams).

\subsection{The quantum Heisenberg algebra associated to $\G$}\label{sec:h_Gdef}

We define the Heisenberg algebra $\h_\G$ associated to $\G$ to be the unital $\k[t,t^{-1}]$ algebra with generators $p_i^{(n)}, q_i^{(n)}$ for $i \in I_\G$ and integers $n \ge 0$ and relations
\begin{equation}\label{rel 1}
p_i^{(n)} p_j^{(m)} = p_j^{(m)} p_i^{(n)} \text{ for all } i,j \in I_\G,
\end{equation}
\begin{equation}\label{rel 2}
q_i^{(n)}q_j^{(m)} = q_j^{(m)} q_i^{(n)} \text{ for all } i,j \in I_\G,
\end{equation}
\begin{equation}\label{rel 3}
q_i^{(n)} p_i^{(m)} = \sum_{k \ge 0} [k+1] p_i^{(m-k)} q_i^{(n-k)} \text{ for all } i \in I_\G,
\end{equation}
\begin{equation}\label{rel 4}
q_i^{(n)} p_j^{(m)} = p_j^{(m)} q_i^{(n)} + p_j^{(m-1)} q_i^{(n-1)} 
\text{ for all } i \neq j \in I_\G \text{ with } \langle i,j \rangle = -1,
\end{equation}
\begin{equation}\label{rel 5}
q_i^{(n)} p_j^{(m)} = p_j^{(m)} q_i^{(n)}
\text{ for all } i\neq j \in I_\G \text{ with } \langle i,j\rangle = 0.
\end{equation}
Here $[k+1] = t^{-k} +t^{-k+2} + \dots + t^{k-2} + t^k$ denotes the quantum integer, and in the above relations we have set $p_i^{(0)} = q_j^{(0)} = 1$ and $p_i^{(k)} = q_i^{(k)} = 0$ when $k <0$ (thus the summations in the above relations are all finite.)

\subsubsection{The usual Heisenberg presentation}
Most mathematical literature about the Heisenberg algebra uses a slightly different presentation than the one above.  In level one, which is the case of interest for this paper, the standard presentation of the Heisenberg algebra is as a unital $\k[t,t^{-1}]$ algebra generated by $a_i(n)$, for $i \in I_\G$ and $n \in \Z-0$.  The relations are
\begin{equation}\label{rel:as}
[a_i(m), a_j(n)] = \delta_{m,-n} [n \la i,j \ra] \frac{[n]}{n}.
\end{equation}
Since the Dynkin diagram with vertex set $I_\G$ is of affine type this algebra is sometimes called the quantum toroidal Heisenberg algebra (since, in this case, the Heisenberg algebra is a subalgebra of the corresponding level one quantum toroidal algebra).  By omitting the affine node from consideration, one recovers a smaller Heisenberg algebra which is a subalgebra of the corresponding level one quantum affine algebra.

\begin{Remark}
Sometimes, in the literature, relation (\ref{rel:as}) above appears with a minus sign on the right hand side. However, this change presentation does not alter the isomorphism type of the algebra, since replacing $a_i(m)$ by $-a_i(m)$ for $m > 0$ takes one presentation to the other. 
\end{Remark}

As will be shown in Lemma \ref{lem:new} below, the relationship between the generators $a_i(m)$ and $p_i^{(m)}, q_i^{(m)}$ is given by the following generating functions
\begin{equation}\label{eq:gens1}
\exp \left( \sum_{m \geq 1} \frac{a_i(-m)}{[m]} z^m \right) = \sum_{n \geq 0} p_i^{(n)} z^n
\text{ and }
\exp \left( \sum_{m \geq 1} \frac{a_i(m)}{[m]} z^m \right) = \sum_{n \geq 0} q_i^{(n)} z^n.
\end{equation}
For example, the first few terms in the above expansion:
$$p_i^{(0)} = 1 = q_i^{(0)},$$
$$p_i^{(1)} = a_i(-1), \quad  q_i^{(1)} = a_i(1),$$
$$p_i^{(2)} = \frac{1}{[2]} a_i(-2) + \frac{1}{2} a_i(-1)^2, \quad 
q_i^{(2)} = \frac{1}{[2]} a_i(2) + \frac{1}{2} a_i(1)^2, \quad \text{etc.}$$

The expressions in Equation \ref{eq:gens1} are known as ``halves of vertex operators'' and they appear naturally in vertex operator constructions of representation of quantum affine and toroidal algebras.  For our categorical considerations, we find the generators $p_i^{(n)}, q_i^{(n)}$ more natural than the generators $a_i(m)$.

\begin{lemma}\label{lem:new}
Relation (\ref{rel:as}) corresponds to relations (\ref{rel 1})-(\ref{rel 5}) under the identification given by the vertex generating functions (\ref{eq:gens1}). 
\end{lemma}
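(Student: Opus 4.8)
The plan is to translate the vertex-operator relation (\ref{rel:as}) into relations among the generating functions $P_i(z) = \sum_{n\ge0} p_i^{(n)} z^n$ and $Q_i(z) = \sum_{n\ge0} q_i^{(n)} z^n$, and then extract the coefficient identities (\ref{rel 1})--(\ref{rel 5}). Write $P_i(z) = \exp\left( \sum_{m\ge1} \frac{a_i(-m)}{[m]} z^m \right)$ and $Q_i(w) = \exp\left( \sum_{m\ge1} \frac{a_i(m)}{[m]} w^m \right)$. Since the $a_i(-m)$ all commute among themselves by (\ref{rel:as}) (the bracket vanishes unless one index is positive and the other negative), we get (\ref{rel 1}) immediately: $P_i(z) P_j(z') = P_j(z') P_i(z)$, hence $p_i^{(n)} p_j^{(m)} = p_j^{(m)} p_i^{(n)}$. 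Symmetrically, the $a_i(m)$ with $m>0$ commute among themselves, giving (\ref{rel 2}).

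The substantive computation is the commutation of $Q_i(w)$ past $P_j(z)$. I would use the standard Baker--Campbell--Hausdorff / normal-ordering identity: since $\left[ \sum_m \frac{a_i(m)}{[m]} w^m, \sum_{m'} \frac{a_j(-m')}{[m']} z^{m'} \right]$ is central (a scalar in $\k[t,t^{-1}]$, being a sum of brackets each landing in the center), one has
\begin{equation*}
Q_i(w) P_j(z) = \exp\left( \left[ \textstyle\sum_{m\ge1} \frac{a_i(m)}{[m]} w^m, \ \sum_{m'\ge1} \frac{a_j(-m')}{[m']} z^{m'} \right] \right) P_j(z) Q_i(w).
\end{equation*}
Evaluating the bracket using (\ref{rel:as}): the $(m,m')$ term contributes $\frac{1}{[m][m']} w^m z^{m'} [a_i(m), a_j(-m')] = \frac{1}{[m][m']} w^m z^{m'} \delta_{m,m'} [m\la i,j\ra] \frac{[m]}{m}$, so the bracket equals $\sum_{m\ge1} \frac{[m\la i,j\ra]}{m[m]} (wz)^m$. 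The exponential prefactor is therefore $F_{ij}(wz) := \exp\left( \sum_{m\ge1} \frac{[m\la i,j\ra]}{m[m]} (wz)^m \right)$, a function of the single variable $wz$. The whole lemma now reduces to: (a) when $\la i,j\ra = 0$, $F_{ij} \equiv 1$, giving (\ref{rel 5}); (b) when $\la i,j\ra = -1$, $F_{ij}(x) = 1 + x$ — because $[-m]/m[m] = -[m]/m[m] = -1/m$, so $\sum_m \frac{-1}{m} x^m = \log(1+x)$; extracting coefficients from $Q_i(w) P_j(z) = (1 + wz) P_j(z) Q_i(w)$ yields precisely (\ref{rel 4}); (c) when $i=j$, $\la i,i\ra = t+t^{-1}$, so $[m\la i,i\ra] = [2m]$ (using $[m(t+t^{-1})]$ in the sense of the quantum integer with this "weight"—more precisely $[m\cdot(t+t^{-1})]$ should be read as the value making $\frac{[m(t+t^{-1})]}{[m]} = t^m + t^{-m} = [2m]/[m]\cdot$ correction; the clean statement is $\frac{[2m]}{m[m]}$), and $\sum_m \frac{[2m]}{m[m]} x^m = \sum_m \frac{t^m+t^{-m}}{m} x^m = -\log(1-tx) - \log(1-t^{-1}x)$, so $F_{ii}(x) = \frac{1}{(1-tx)(1-t^{-1}x)} = \sum_{k\ge0} [k+1] x^k$. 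Extracting the coefficient of $w^n z^m$ in $Q_i(w) P_i(z) = \left(\sum_{k\ge0}[k+1](wz)^k\right) P_i(z) Q_i(w)$ gives (\ref{rel 3}).

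Finally I would check the converse direction — that (\ref{rel 1})--(\ref{rel 5}) imply (\ref{rel:as}) — which amounts to observing that the generating-function substitution (\ref{eq:gens1}) is invertible (one can solve recursively for $a_i(\pm m)$ in terms of the $p_i^{(n)}, q_i^{(n)}$, e.g. $a_i(\pm1) = q_i^{(1)}$ resp. $p_i^{(1)}$, then proceed by the logarithmic derivative $\sum_m a_i(-m) z^{m} = z \frac{d}{dz}\log P_i(z)$ up to $[m]$-twists), so the two presentations define isomorphic algebras; running the computation above in reverse shows the bracket relations follow from the four generating-function identities. The main obstacle is purely bookkeeping: getting the quantum-integer manipulations in case (c) exactly right, in particular matching $\frac{1}{(1-tz)(1-t^{-1}z)} = \sum_{k\ge0}[k+1]z^k$ and being careful that $F_{ij}$ depends only on the product $wz$ so that coefficient extraction produces the shifted-index sums appearing on the right-hand sides of (\ref{rel 3}) and (\ref{rel 4}) rather than spurious cross terms. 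No genuinely hard idea is needed beyond the centrality of the relevant bracket, which is what makes the BCH prefactor a scalar.
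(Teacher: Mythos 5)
Your proposal follows essentially the same route as the paper's proof: both view the halves of vertex operators as exponentials whose exponents have a central commutator, apply the identity $e^{B}e^{A}=e^{-[A,B]}e^{A}e^{B}$, recognize the resulting prefactor as a power series in the single variable $wz$ (in particular $\frac{1}{(1-twz)(1-t^{-1}wz)}=\sum_{k\ge 0}[k+1](wz)^k$ for $i=j$, and a degree-one factor when $\la i,j\ra=-1$), and then extract coefficients of $z^mw^n$. The one blemish is a sign slip in your case (b): $\sum_{m\ge1}\frac{-1}{m}x^m=\log(1-x)$, not $\log(1+x)$, so landing on the plus sign in (\ref{rel 4}) requires tracking the convention for $[n\la i,j\ra]\frac{[n]}{n}$ at negative $n$ (or using the sign freedom $a_i(m)\mapsto -a_i(m)$ mentioned in the paper's Remark) — a bookkeeping looseness the paper's own computation of $[A,B]$ shares.
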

\begin{proof}
We prove relations (\ref{rel 3}) and (\ref{rel 4}) above, the others being completely straightforward to check. 

To prove relation (\ref{rel 3}) let us denote 
$$A(z) := \sum_{\ell \geq 1} \frac{a_i(-\ell)}{[\ell]} z^\ell \text{ and }
B(w) := \sum_{\ell \geq 1} \frac{a_i(\ell)}{[\ell]} w^\ell.$$
Then $q_i^{(n)} p_i^{(m)} = [z^m w^n] \exp(B) \exp(A)$, where $[z^mw^n]$ means taking the coefficient of tthe polynomial $z^mw^n$ in the subsequent power series. 
Now, using relation (\ref{rel:as}), it follows that 
\begin{eqnarray*}
[A,B] 
&=& \sum_{\ell \ge 1} \frac{[2\ell]}{\ell [\ell]} w^\ell z^\ell \\
&=& \sum_{\ell \ge 1} \frac{(t^\ell + t^{-\ell})}{\ell} w^\ell z^\ell \\
&=& \log \left( (1-twz)(1-t^{-1}wz) \right).
\end{eqnarray*}
It follows that $[A,B]$ commutes with $A$ and $B$ and hence that
$$\exp(B)\exp(A) = \exp(-[A,B])\exp(A)\exp(B)$$
(see for instance Lemma 9.43 of \cite{N4}). Thus
\begin{eqnarray*}
[z^m w^n] \exp(B) \exp(A) 
&=& [z^m w^n] \frac{1}{(1-twz)(1-t^{-1}wz)} \exp(A) \exp(B) \\
&=& [z^m w^n] \sum_{\ell \ge 0} [\ell+1] (wz)^\ell \exp(A) \exp(B) \\
&=& \sum_{k \ge 0} [k+1] p_i^{(m-k)} q_i^{(n-k)}
\end{eqnarray*}
which is what we needed to prove. 

In the case of relation (\ref{rel 4}) let us denote
$$A(z) := \sum_{\ell \geq 1} \frac{a_j(-\ell)}{[\ell]} z^\ell \text{ and }
B(w) := \sum_{\ell \geq 1} \frac{a_i(\ell)}{[\ell]} w^\ell.$$
Then $q_i^{(n)} p_j^{(m)} = [z^m w^n] \exp(B) \exp(A)$. However, this time 
$$[A,B] = \sum_{\ell \ge 1} \frac{[-1]}{\ell} w^\ell z^\ell = - \log (1-wz)$$
and so 
\begin{eqnarray*}
[z^m w^n] \exp(B) \exp(A) 
&=& [z^m w^n] (1-wz) \exp(A) \exp(B) \\ 
&=& p_j^{(m)} q_i^{(n)} + p_j^{(m-1)} q_i^{(n-1)}
\end{eqnarray*}
which is what we needed to prove. 
\end{proof}

\subsubsection{The transposed generators}
There is an alternative generating set of $\h_\G$ given by elements $p_i^{(1^n)}$ and $a_i^{(1^n)}$. These are defined by halves of vertex operators which are similar to those of (\ref{eq:gens1}):\begin{equation}\label{eq:gens2}
\exp \left( - \sum_{m \geq 1} \frac{a_i(-m)}{[m]} z^m \right) = \sum_{n \geq 0} (-1)^n p_i^{(1^n)} z^n
\text{ and }
\exp \left( -\sum_{m \geq 1} \frac{a_i(m)}{[m]} z^m \right) = \sum_{n \geq 0} (-1)^n q_i^{(1^n)} z^n.
\end{equation}
Thus, for example,
$$p_i^{(1^0)} = q_i^{(1^0)} = 1,$$
$$p_i^{(1^1)} = a_i(-1), \quad q_i^{(1^1)} = a_i(1),$$
$$p_i^{(1^2)} = - \frac{1}{[2]} a_i(-2) + \frac{1}{2} a_i(-1)^2, \quad q_i^{(1^2)} = - \frac{1}{[2]} a_i(2) + \frac{1}{2} a_i(1)^2, \quad \text{etc.}$$
The commutation relations among the $p_i^{(1^n)}$ and $q_i^{(1^n)}$ are the same as those between the $p_i^{(n)}$ and $q_i^{(n)}$ (just replace $(n)$ by $(1^n)$ everywhere).  These relations can be proven directly by arguments similar to those of Lemma \ref{lem:new}, though they also follows from the the existence of an automorphism $\psi$ defined in Section \ref{sec:psi}.

Perhaps more interesting are the following relations
\begin{eqnarray*}
p_i^{(m)} p_j^{(1^n)} &=& p_j^{(1^n)} p_i^{(m)} \text{ and } q_i^{(m)} q_j^{(1^n)} = q_j^{(1^n)} q_i^{(m)} \\
q_i^{(1^m)} p_j^{(n)} &=& 
\begin{cases}
p_i^{(n)} q_i^{(1^m)} + [2] p_i^{(n-1)} q_i^{(1^{m-1})} + p_i^{(n-2)} q_i^{(1^{m-2})} \text{ if } i=j \\
\sum_{k \ge 0} p_j^{(n-k)} q_i^{(1^{m-k})} \text{ if } \la i,j \ra = -1 \\
p_j^{(n)} q_i^{(1^m)} \text{ if } \la i,j \ra = 0.
\end{cases} 
\end{eqnarray*}
These relations can be checked directly in the same way as in the proof of lemma \ref{lem:new}. 

\subsubsection{The automorphism $\psi$}\label{sec:psi}
The Heisenberg algebra $\h_\G$ has an involutive automorphism
$$
	\psi: \h_\G\longrightarrow \h_\G, \ \ a_i(k) \mapsto (-1)^{k+1} a_i(k).
$$
In terms of the alternative generators $p_i^{(m)},q_i^{(m)},p_j^{(1^m)},q_i^{(1^m)}$, this has the effect of exchanging $(n)$ and $(1^n)$:
$$
	\psi: p_i^{(n)} \leftrightarrow p_i^{(1^n)}, \ \ q_i^{(n)} \leftrightarrow q_i^{(1^n)}.
$$

\subsubsection{Idempotent modification}

The version of the Heisenberg algebra we use is actually an idempotent modification of $\h_\G$ (this is similar to the appearance of Lusztig's idempotent version of $U_q(\mathfrak{g})$ in the categorification of quantum groups \cite{KL1,KL2,KL3,R}). In the idempotent modification, the unit $1$ is replaced by a collection of orthogonal idempotents $\{1_m\}_{m \in \Z}$, with
$$ 1_{k+m} p_i^{(m)} = 1_{k+m} p_i^{(m)} 1_k = p_i^{(m)} 1_k $$
and
$$ 1_{k-m} q_i^{(m)} = 1_{k-m} q_i^{(m)} 1_k = q_i^{(m)} 1_k.$$
The remaining defining relations in the unital algebra $h_\G$ give, for each $k\in\Z$, a defining relation of the idempotent modification. Namely, take the original relation and add the idempotent $1_k$ at the end of the left and right hand side, for example
\begin{equation*}
q_i^{(n)} p_j^{(m)} 1_k = p_j^{(m)} q_i^{(n)} 1_k \text{ for all } i \neq j \in I_\G \text{ with } \la i,j \ra = 0 \text{ and all } k \in \Z.
\end{equation*}

Note that these relations do not depend essentially on the particular idempotent $1_k$.  As a result, we abuse notation slightly and also denote both the unital algebra and its idempotent modification by $\h_\G$.  

One feature of the idempotent modified $\h_\G$ is that it is a $\k[t,t^{-1}]$-linear category (just as any $\k$-algebra with a collection of idempotents is a $\k$-linear category). The objects of this category are then the integers, while the space of morphisms from $n$ to $m$ is the $\k[t,t^{-1}]$ module $1_n \h_\G 1_m$. Since $\h_\G$ is already a category, its categorification $\H_\G$ defined in Section \ref{sec:catH} will be a 2-category.

\subsection{The Fock space}

Let $\h_\G^- \subset \h_\G$ denote the subalgebra generated by the $q_i^{(n)}1_k$ for all $i \in I_\G$, $k\leq 0$ and $n \geq 0$.  Let $\mbox{triv}_0$ denote the trivial representation of $\h_\G^-$, where $1_0$ acts be the identity and $1_k$ acts by $0$ for $k<0$. The $\h_\G$ module
$ \mathcal{F}_\G = \Ind_{\h_\G^-}^{\h_\G}(\mbox{triv}_0) $
given by inducing the trivial representation from $\h_\G^-$ to $\h_\G$ is called the Fock space representation of $\h_\G$.  

The Fock space $\mathcal{F}_\G$ is naturally isomorphic to the space of polynomials in the commuting variables $\{p_j^{(m)}\}_{j\in I_\G, m\geq 0}$. If we grade the Fock space by declaring $\deg(p_j^{(m)}) = m$ then the idempotent $1_l\in \h_\G$ acts by projecting onto the degree $l$ subspace inside $\mathcal{F}_\G \cong \k[\{p_j^{(m)}\}_{j,m}]$ (in particular, when $l<0$ the idempotent $1_l$ acts by 0.) We will construct categorifications of this representation in sections \ref{sec:action} and \ref{sec:koszul}.

\section{The 2-category $\H_\G$}\label{sec:catH}

\subsection{The algebra $B^\G$}

To define the 2-category $\H_\G$ which categorifies $\h_\G$ we first need to define the algebra $B^\G$ and fix notation involving idempotents in this algebra. 

Since $\G$ acts on $V$ it also acts on the exterior algebra $\Lambda^*(V)$. Let $B^\G := \Lambda^* (V) \rtimes \G$ be their semi-direct product, which contains both $\Lambda^*(V)$ and $\k[\G]$ as subalgebras. This semi-direct product is also called the smash product of $\k[\G]$ and $\Lambda^*(V)$, and is sometimes denoted $\Lambda^*(V)\#\k[\G]$ in Hopf algebra literature, though we prefer the term semi-direct product and avoid the $\#$ notation.  

Explicitly, an element in $B^\G$ is a linear combination of terms $(v,\gamma)$ where $v \in \Lambda^*(V)$ and $\gamma \in \G$. The multiplication in $B^\G$ is given by 
$$(v, \gamma) \cdot (v', \gamma') = (v \wedge (\gamma \cdot v'), \gamma \gamma').$$  
The natural $\Z$ grading on $\Lambda^*(V)$ extends to a grading of $B^\G$ by putting $\k[\G] \subset B^\G$ in degree zero. This makes $B^\G$ into a superalgebra.  We denote the degree of a homogeneous element $b \in B^\G$ by $|b|$.

If we fix a basis $\{v_1, v_2\}$ of $V$ and let $\omega := v_1 \wedge v_2 \in \wedge^2(V)$ then $B^\G$ has a homogeneous basis over $\k$ given by $\{(1,\gamma), (v_1,\gamma), (v_2,\gamma), (\omega,\gamma) \}_{\gamma \in \G}$.

Define a $\k$-linear trace $\tr : B^\G \longrightarrow \k$ by setting
$$ \tr((\omega,\gamma)) =  \delta_{\gamma,1} 1 \text{ and } \tr((1,\gamma)) = \tr((v_1,\gamma)) = \tr((v_2,\gamma)) = 0.$$
The trace $\tr$ is supersymmetric (for any  $a,b \in B^\G$ we have $\tr(ab) = (-1)^{|a||b|}\tr(ba)$) and non-degenerate. This also induces a trace on $\k[\G]$ via $\tr(\gamma) := \tr((\omega,\gamma))$. This corresponds to the usual trace on $\k[\G]$ divided by $|\G|$ (in this way $\tr(1)=1$). 

For a fixed $\k$-basis $\mathcal{B}$ of $B^\G$, let $\mathcal{B}^\vee$ denote the basis of $B^\G$ dual to $\mathcal{B}$ with respect to the associated non-degenerate bilinear form $\langle a,b \rangle := \tr(ab)$. We denote the dual vector of $b \in \mathcal{B}$ by $b^\vee\in \mathcal{B}^\vee$.

\begin{Remark}
We may think of elements of $B^\G$ as $\k[\G]$-module homomorphisms, 
$$B^\G \cong \Hom_{\k[\G]}(\k[\G], \Lambda^*(V) \otimes \k[\G]).$$
The above isomorphism is given by the following composition
\begin{eqnarray*}
B^\G &\cong& \Hom_{B^\G}(B^\G,B^\G) \cong \Hom_{B^\G}(\Ind_{\k[\G]}^{B^\G} \k[\G],B^\G) \cong \Hom_{\k[\G]}(\k[\G],\Res_{B^\G}^{\k[\G]}B^\G) \\ 
&\cong& \Hom_{\k[\G]}(\k[\G], \Lambda^*(V) \otimes \k[\G]).
\end{eqnarray*}
\end{Remark}

\subsubsection{Idempotents}\label{sec:idemp}

Let $V_1, \dots, V_m$ denote the distinct irreducible representations of $\G$. By Maschke's theorem, the group algebra $\k[\G]$ decomposes as a direct product of matrix algebras
$$ \k[\G] \cong M_{n_1}(\k) \times \dots \times M_{n_m}(\k)$$
where the distinct irreducible representations $V_1,\dots, V_m$ are each realized as an irreducible representation of one of the matrix algebras $M_{n_i}(\k)$.  Let $f_1,\dots,f_m$ denote the distinct pairwise orthogonal central idempotents of $\k[\G]$; each $f_i$ is the identity matrix in the matrix algebra $M_{n_i}(\k)$, and
$$ 1 = \sum_i f_i \in \k[\G].  $$
The above is not, in general, a minimal decomposition of $1$ as a sum of orthogonal idempotents, since the idempotents $f_i$ themselves are not minimal if $\dim(V_i) > 1$. For each $i$ and $s=1, \dots, n_i$, let $e_{i,s}$ denote the matrix unit of $M_{n_i}(\k)$ whose $(s,s)$ entry is equal to 1 and whose other entries are 0.  Then the $e_{i,s}$ are minimal orthogonal idempotents in $\k[\G]$, with $$ f_i = \sum_{s=1}^{n_i} e_{i,s}. $$

An important role will be played by the (super)algebra 
$$B_n^\G := (B^\G \otimes \dots \otimes B^\G) \rtimes S_n.$$ 
The above tensor product is understood as the tensor product in the category of superalgebras, and the action of $S_n$ is by superpermutations: if $s_k \in S_n$ is the simple transposition $(k,k+1)$, then
$$ s_k \cdot (b_1 \otimes \dots \otimes b_k \otimes b_{k+1} \otimes \dots \otimes b_n) = (-1)^{|b_k||b_{k+1}|} b_1 \otimes \dots \otimes b_{k+1} \otimes b_k \otimes \dots \otimes b_n.
$$
The degree zero subalgebra $\k[\G^n \rtimes S_n] \subset B_n^\G$ contains all the idempotents of $B_n^\G$. This subalgebra is isomorphic to a direct product of matrix algebras
$$ \k[\G^n \rtimes S_n] = M_{l_1}(\k) \times \dots \times M_{l_s}(\k)$$
with one matrix algebra for each isomorphism class of irreducible $\k[\G^n \rtimes S_n]$-module. Just as the isomorphism classes of irreducible $\k[S_n]$-modules are parametrized by partitions of $n$ the isomorphism classes of irreducible $\k[\G^n \rtimes S_n]$-modules are parametrized by partition-valued functions of $I_\G$ (i.e. $|I_\G|$-tuples of partitions of $n$). 

\subsection{The category $\H'_\G$}\label{subsec:catH'}

We define an additive, $\k$-linear, $\Z$ graded 2-category $\H'_\G$ as follows. The objects of $\H'_\G$ are indexed by the integers $\Z$. The 1-morphisms are generated by $P$ and $Q$, where for each $n$, $P$ denotes a 1-morphism from $n$ to $n+1$ and $Q$ is a 1-morphism from $n+1$ to $n$. Thus a 1-morphism of $\H'_\G$ is a finite composition (sequence) of $P$'s and $Q$'s. The identity 1-morphism of $n$ is denoted $\mathbf{1}$ (the empty sequence).  

\begin{Remark} Technically we should write $P(n)$ and $(n)Q$ to identify the domain and range of $P$ and $Q$. However, the properties of $P$ and $Q$ do not depend on $n$ so we usually omit this extra parameter in order to simplify notation. We could have chosen to define $\H'_\G$ as a monoidal 1-category instead of as a 2-category, but we prefer to consider $\H'_\G$ as a 2-category because this better parallels the story for Kac-Moody Lie algebras in \cite{KL1,KL2,KL3,R}. 
\end{Remark}

The space of 2-morphisms between two 1-morphisms is a $\Z$ graded $\k$-algebra generated by suitable planar diagrams modulo local relations. The diagrams consist of oriented compact one-manifolds immersed into the plane strip $\R \times [0,1]$ modulo isotopies fixing the boundary which preserve the relative height of dots.  (Isotopic diagrams which change the relative height of dots are equal to one another up to a sign, as will be indicated below.)  The source of a given 2-morphism lies on the boundary $y=0$, while the target lies on $y=1$.  Thus 2-morphisms are read bottom to top.

A single upward oriented strand denotes the identity 2-morphism $\id: P \rightarrow P$ while a downward oriented strand denotes the identity 2-morphism $\id: Q \rightarrow Q$.

$$\begin{tikzpicture}
[>=stealth] \draw [->](0,0) -- (0,1) [very thick];
[>=stealth] \draw [<-](5,0) -- (5,1) [very thick];
\end{tikzpicture}$$

An upward-oriented line is allowed to carry dots labeled by elements $b \in B^\G$. For example, 
$$
\begin{tikzpicture}[>=stealth]
\draw [->](0,0) -- (0,1) [very thick];
\filldraw [blue](0,.4) circle (2pt);
\draw (0,.4) node [anchor=west] [black] {$b'$};
\filldraw [blue](0,.65) circle (2pt);
\draw (0,.65) node [anchor=west] [black] {$b$};
\end{tikzpicture}
$$
is an element of $\Hom_{\H'_\G}(P,P)$, while
$$
\begin{tikzpicture}[>=stealth]
\draw [->](0,0) -- (1,1) [very thick];
\draw [<-](1,0) -- (0,1) [very thick];
\filldraw [blue](.25,.25) circle (2pt);
\draw (.25,.25) node [anchor=west] [black] {$b$};
\end{tikzpicture}
$$
is an element of $\Hom_{\H'_\G}(PQ,QP)$. Note that the domain of a 2-morphism is specified at the bottom of the diagram and the codomain is specified at the top, and compositions of 2-morphisms are read from bottom to top.  

The local relations we impose are the following. First we have relations involving the movement of dots along the carrier strand. We allow dots to move freely along strands and through intersections:
$$
\begin{tikzpicture}[>=stealth]
\draw [->](0,0) -- (1,1) [very thick];
\draw [->](1,0) -- (0,1) [very thick];
\filldraw [blue](.25,.25) circle (2pt);
\draw (.25,.25) node [anchor=west] [black] {$b$};
\draw (2,.5) node{$= $};
\draw [shift={+(1,0)}][->](2,0) -- (3,1) [very thick];
\draw [shift={+(1,0)}][->](3,0) -- (2,1) [very thick];
\filldraw [shift={+(1,0)}][blue](2.75,0.75) circle (2pt);
\draw [shift={+(1,0)}](2.75,.75) node [anchor=east] [black] {$b$};
\draw  [shift={+(1,0)}][shift={+(5,0)}][->](0,0) -- (1,1) [very thick];
\draw  [shift={+(1,0)}][shift={+(5,0)}][->](1,0) -- (0,1) [very thick];
\filldraw  [shift={+(1,0)}][shift={+(5,0)}][blue](.75,.25) circle (2pt);
\draw [shift={+(1,0)}][shift={+(5,0)}] (.75,.25) node [anchor=east] [black] {$b$};
\draw  [shift={+(1,0)}][shift={+(5,0)}](2,.5) node{$= $};
\draw  [shift={+(2,0)}][shift={+(5,0)}][->](2,0) -- (3,1) [very thick];
\draw  [shift={+(2,0)}][shift={+(5,0)}][->](3,0) -- (2,1) [very thick];
\filldraw  [shift={+(2,0)}][shift={+(5,0)}][blue](2.25,0.75) circle (2pt);
\draw [shift={+(2,0)}][shift={+(5,0)}] (2.25,.75) node [anchor=east] [black] {$b$};
\end{tikzpicture}
$$
$$
\begin{tikzpicture}[>=stealth]
\draw (0,0) -- (0,.5) [->][very thick];
\filldraw [blue] (0,.25) circle (2pt);
\draw (0,.25) node [anchor=east] [black] {$b$};
\draw (1,0) -- (1,.5) [very thick];
\draw (0,0) arc (180:360:.5) [very thick];
\draw (1.5,0) node {=};
\draw (2,0) -- (2,.5)[->] [very thick];
\filldraw [blue] (3,.25) circle (2pt);
\draw (3,.25) node [anchor=west] [black] {$b$};
\draw (3,0) -- (3,.5) [very thick];
\draw (2,0) arc (180:360:.5) [very thick];

\draw (5,0) -- (5,-.5) [very thick];
\filldraw [blue] (5,-.25) circle (2pt);
\draw (5,-.25) node [anchor=east] [black] {$b$};
\draw (6,0) -- (6,-.5)[->] [very thick];
\draw (5,0) arc (180:0:.5) [very thick];
\draw (6.5,0) node {=};
\draw (7,0) -- (7,-.5) [very thick];
\filldraw [blue] (8,-.25) circle (2pt);
\draw (8,-.25) node [anchor=west] [black] {$b$};
\draw (8,0) -- (8,-.5) [->][very thick];
\draw (7,0) arc (180:0:.5) [very thick];
\end{tikzpicture}
$$
$$
\begin{tikzpicture}[>=stealth]
\draw (0,0) -- (0,.5) [very thick];
\filldraw [blue] (0,.25) circle (2pt);
\draw (0,.25) node [anchor=east] [black] {$b$};
\draw (1,0) -- (1,.5) [->][very thick];
\draw (0,0) arc (180:360:.5) [very thick];
\draw (1.5,0) node {=};
\draw (2,0) -- (2,.5)[very thick];
\filldraw [blue] (3,.25) circle (2pt);
\draw (3,.25) node [anchor=west] [black] {$b$};
\draw (3,0) -- (3,.5) [->][very thick];
\draw (2,0) arc (180:360:.5) [very thick];

\draw (5,0) -- (5,-.5)[->] [very thick];
\filldraw [blue] (5,-.25) circle (2pt);
\draw (5,-.25) node [anchor=east] [black] {$b$};
\draw (6,0) -- (6,-.5) [very thick];
\draw (5,0) arc (180:0:.5) [very thick];
\draw (6.5,0) node {=};
\draw (7,0) -- (7,-.5)[->] [very thick];
\filldraw [blue] (8,-.25) circle (2pt);
\draw (8,-.25) node [anchor=west] [black] {$b$};
\draw (8,0) -- (8,-.5) [very thick];
\draw (7,0) arc (180:0:.5) [very thick];
\end{tikzpicture}.
$$
The U-turn 2-morphisms are adjunctions making $P$ and $Q$ biadjoint up to a grading shift that will be defined later in this section. Collision of dots is controlled by the algebra structure on $B^\G$:
$$
\begin{tikzpicture}[>=stealth]
\draw (0,0) -- (0,2)[->][very thick];
\filldraw [blue] (0,1) circle (2pt);
\draw (0,1) node [anchor=east] [black] {$b'b$};
\draw (.5,1) node {=};
\draw (.5,1) node {=};
\draw (1,0) -- (1,2)[->][very thick];
\filldraw [blue] (1,.66) circle (2pt);
\draw (1,.66) node [anchor=west] [black] {$b'$};
\filldraw [blue] (1,1.33) circle (2pt);
\draw (1,1.33) node [anchor=west] [black] {$b$};

\draw (5,2) -- (5,0)[->][very thick];
\filldraw [blue] (5,1) circle (2pt);
\draw (5,1) node [anchor=east] [black] {$bb'$};
\draw (5.5,1) node {=};
\draw (5.5,1) node {=};
\draw (6,2) -- (6,0)[->][very thick];
\filldraw [blue] (6,.66) circle (2pt);
\draw (6,.66) node [anchor=west] [black] {$b'$};
\filldraw [blue] (6,1.33) circle (2pt);
\draw (6,1.33) node [anchor=west] [black] {$b$};

\draw [shift = {+(10,0)}] (-1,0) -- (-1,2)[->][very thick];
\filldraw [shift = {+(10,0)}] [blue] (-1,1) circle (2pt);
\draw [shift = {+(10,0)}] (-1,1) node [anchor=east] [black] {$b$};
\draw [shift = {+(10,0)}] (-.5,1) node {+};
\draw [shift = {+(10,0)}] (0,0) -- (0,2)[->][very thick];
\filldraw [shift = {+(10,0)}] [blue] (0,1) circle (2pt);
\draw [shift = {+(10,0)}] (0,1) node [anchor=west] [black] {$b'$};
\draw [shift = {+(10,0)}] (.5,1) node {=};
\draw [shift = {+(10,0)}] (.5,1) node {=};
\draw [shift = {+(10,0)}] (1,0) -- (1,2)[->][very thick];
\draw[shift = {+(10,0)}]  (1,1) node [anchor=west] [black] {$b+b'$};
\filldraw [shift = {+(10,0)}]  [blue] (1,1) circle (2pt);
\end{tikzpicture}
$$ 
Note that dots collide, so to speak, in the direction of the arrow. 

Dots on distinct strands supercommute when they move past one another:
$$
\begin{tikzpicture}[>=stealth]
\draw (0,0) -- (0,2)[->][very thick] ;
\filldraw [blue] (0,.66) circle (2pt);
\draw (0,.66) node [anchor=east] [black] {$b$};
\draw (.5,.5) node {$\dots$};
\draw (1,0)--(1,2)[->] [very thick];
\filldraw [blue] (1,1.33) circle (2pt);
\draw (1,1.33) node [anchor=west] [black] {$b'$};
\draw (3.5,1) node {$= \ (-1)^{|b||b'|}$};
\draw [shift={+(3,0)}](2,0) --(2,2)[->][very thick] ;
\filldraw [shift={+(3,0)}][blue] (2,1.33) circle (2pt);
\draw [shift={+(3,0)}](2,1.33) node [anchor=east] [black] {$b$};
\draw [shift={+(3,0)}](2.5,.5) node {$\dots$};
\filldraw [shift={+(3,0)}][blue] (3,.66) circle (2pt);
\draw [shift={+(3,0)}](3,.66) node [anchor=west] [black] {$b'$};
\draw [shift={+(3,0)}](3,0) -- (3,2)[->][very thick] ;
\end{tikzpicture}
.
$$

In addition to specifying how dots collide and slide we impose the following local relations in the $\G$ graphical calculus:

\begin{equation}\label{eq:rel1}
\begin{tikzpicture}[>=stealth]
\draw [shift={+(7,0)}](0,0) .. controls (1,1) .. (0,2)[->][very thick] ;
\draw [shift={+(7,0)}](1,0) .. controls (0,1) .. (1,2)[->] [very thick];
\draw [shift={+(7,0)}](1.5,1) node {=};
\draw [shift={+(7,0)}](2,0) --(2,2)[->][very thick] ;
\draw [shift={+(7,0)}](3,0) -- (3,2)[->][very thick] ;

\draw (0,0) -- (2,2)[->][very thick];
\draw (2,0) -- (0,2)[->][very thick];
\draw (1,0) .. controls (0,1) .. (1,2)[->][very thick];
\draw (2.5,1) node {=};
\draw (3,0) -- (5,2)[->][very thick];
\draw (5,0) -- (3,2)[->][very thick];
\draw (4,0) .. controls (5,1) .. (4,2)[->][very thick];
\end{tikzpicture}
\end{equation}

\begin{equation}\label{eq:rel2}
\begin{tikzpicture}[>=stealth]
\draw (0,0) .. controls (1,1) .. (0,2)[<-][very thick];
\draw (1,0) .. controls (0,1) .. (1,2)[->] [very thick];
\draw (1.5,1) node {=};
\draw (2,0) --(2,2)[<-][very thick];
\draw (3,0) -- (3,2)[->][very thick];

\draw (3.8,1) node{$-\sum_{b \in \mathcal{B}}$};

\draw (4,1.75) arc (180:360:.5) [very thick];
\draw (4,2) -- (4,1.75) [very thick];
\draw (5,2) -- (5,1.75) [very thick][<-];
\draw (5,.25) arc (0:180:.5) [very thick];
\filldraw [blue] (4.5,1.25) circle (2pt);
\draw (4.5,1.25) node [anchor=south] {$b$};
\filldraw [blue] (4.5,0.75) circle (2pt);
\draw (4.5,.75) node [anchor=north] {$b^\vee$};
\draw (5,0) -- (5,.25) [very thick];
\draw (4,0) -- (4,.25) [very thick][<-];

\draw [shift={+(7,0)}](0,0) .. controls (1,1) .. (0,2)[->][very thick];
\draw [shift={+(7,0)}](1,0) .. controls (0,1) .. (1,2)[<-] [very thick];
\draw [shift={+(7,0)}](1.5,1) node {=};
\draw [shift={+(7,0)}](2,0) --(2,2)[->][very thick];
\draw [shift={+(7,0)}](3,0) -- (3,2)[<-][very thick];
\end{tikzpicture}
\end{equation}

\begin{equation}\label{eq:rel3}
\begin{tikzpicture}[>=stealth]
\draw [shift={+(0,0)}](0,0) arc (180:360:0.5cm) [very thick];
\draw [shift={+(0,0)}][->](1,0) arc (0:180:0.5cm) [very thick];
\filldraw [shift={+(1,0)}][blue](0,0) circle (2pt);
\draw [shift={+(0,0)}](1,0) node [anchor=east] {$b$};
\draw [shift={+(0,0)}](1.75,0) node{$= \tr(b).$};

\draw  [shift={+(5,0)}](0,0) .. controls (0,.5) and (.7,.5) .. (.9,0) [very thick];
\draw  [shift={+(5,0)}](0,0) .. controls (0,-.5) and (.7,-.5) .. (.9,0) [very thick];
\draw  [shift={+(5,0)}](1,-1) .. controls (1,-.5) .. (.9,0) [very thick];
\draw  [shift={+(5,0)}](.9,0) .. controls (1,.5) .. (1,1) [->] [very thick];
\draw  [shift={+(5,0)}](1.5,0) node {$=$};
\draw  [shift={+(5,0)}](2,0) node {$0.$};

\end{tikzpicture}
\end{equation}

In the first equation on line \ref{eq:rel2} above, the summation is taken over a basis $\mathcal{B}$ of $B^\G$-- this morphism is easily seen to be independent of the choice of basis.
We assign a $\Z$ grading to the space of planar diagrams by defining

$$
\begin{tikzpicture}[>=stealth]
\draw  (-.5,.5) node {$deg$};
\draw [->](0,0) -- (1,1) [very thick];
\draw [->](1,0) -- (0,1) [very thick];
\draw (1.5,.5) node{$ = 0$};
\end{tikzpicture}
$$
$$
\begin{tikzpicture}[>=stealth]
\draw  (-.5,-.25) node {$deg$};
\draw (0,0) arc (180:360:.5)[->] [very thick];
\draw (1.75,-.25) node{$ = deg$};
\draw (3.5,-.5) arc (0:180:.5) [->][very thick];
\draw (4.5,-.25) node{$ =-1$};
\end{tikzpicture}
$$
$$
\begin{tikzpicture}[>=stealth]
\draw  (-.5,-.25) node {$deg$};
\draw (0,0) arc (180:360:.5)[<-] [very thick];
\draw (1.75,-.25) node{$ = deg$};
\draw (3.5,-.5) arc (0:180:.5) [<-][very thick];
\draw (4.5,-.25) node{$ = 1$};
\end{tikzpicture}
$$
and by defining the degree of a dot labeled by $b$ to be the degree of $b$ in the graded algebra $B^\G$.  When equipped with these assignments all of the graphical relations are graded.  Thus $\Hom_{\H'_\G}(\cdot, \cdot)$ is a $\Z$ graded vector space and composition of morphisms is compatible with the grading.  

Because of the relation dictating how dots slide through crossings, there is a natural map from the semidirect product 
$$B_n^\G := (B^\G \otimes \dots \otimes B^\G) \rtimes S_n$$
to the endomorphism algebra $\Hom_{\H'_\G}(P^n,P^n)$ whose image is the subalgebra spanned by braid-like diagrams (i.e. diagrams with no local maxima or local minima).  Notice that, if we denote by $T_k$ upward crossings of the $k$ and $(k+1)$st adjacent strands and by $X_k(b)$ dots labeled $b \in B^\G$ on the $k$th strand then this subalgebra is generated by $T_1, \dots, T_{n-1}$ and $X_1(b), \dots, X_n(b)$ for $b \in B^\G$.  The $T$s and $X$s are subject to the following relations:
$$ T_k^2 = 1 \text{ and } T_k T_l = T_l T_k \text{ for } |k-l| > 1,$$
$$ T_kT_{k+1}T_k = T_{k+1}T_kT_{k+1} \text{ for all } k=1, \dots, n-2 $$
$$ T_k X_k(b) = X_{k+1}(b)T_k \text{ for  all } k=1, \dots, n-1. $$
$$ X_k(b)X_k(b') = X_k(bb') \text{ and } X_k(b)X_{l}(b') = (-1)^{|b||b'|}X_{l}(b')X_k(b) \text{ if }  k \ne l.$$
We will write $T$ and $X(b)$ instead of $T_k$ and $X_k(b)$ when the subscript is understood.
In fact, it will follow from the construction of Section \ref{sec:koszul} that the map $B_n^\G \rightarrow \End_{\H'_\G}(P^n)$ is injective, so that the subalgebra of endomorphisms spanned by braid-like diagrams is isomorphic $B_n^\G$ (see Remark \ref{rem:injective}).

\subsection{The category $\H_\G$}

We define $\H_\G$ to be the Karoubi envelope (also known as the idempotent completion) of $\H'_\G$.  By definition, the objects of $\H_\G$ are also indexed by $\Z$ while a 1-morphism consists of a pair $(R,e)$ where $R$ is a 1-morphism of $\H'_\G$ and $e: R \rightarrow R$ is an idempotent endomorphism $e^2 = e$. Morphisms from $(R,e)$ to $(R',e')$ are morphisms $g: R \rightarrow R'$ such that $ge = g$ and $e'g = g$.  The idempotent $e$ defines the identity morphism of $(R,e)$.

Since $\H_\G$ is a graded 2-category, the (split) Grothendieck group $K_0(\H_\G)$ of $\H_\G$ is a $\k[q,q^{-1}]$-linear category where multiplication by $q$ corresponds to the shift $\la 1 \ra$ (we will assume Grothendieck groups are tensored with the base field $\k$). The objects of $K_0(\H_\G)$ are the same as the objects of $\H_\G$, namely the integers. The space of morphisms $\Hom_{K_0(\H_\G)}(n,m)$ is the split Grothendieck group of the additive category $\Hom_{\H_\G}(n,m)$.  Composition of 1-morphisms in $\H_\G$ gives the Grothendieck group $K_0(\H_\G)$ the structure of a $\k$-algebra.

\subsection{Theorem \#1}\label{sec:mainthm1}

The first main theorem of this paper is that $\H_\G$ categorifies $\h_\G$:
\begin{theorem}\label{thm:main1}
There is a canonical isomorphism of algebras 
$$ \pi: \h_\G \xrightarrow{\sim} K_0(\H_\G). $$
\end{theorem}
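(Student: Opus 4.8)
The plan is to construct a map $\pi: \h_\G \to K_0(\H_\G)$ (already promised earlier in the excerpt) and then prove it is both surjective and injective. The construction of $\pi$ sends $p_i^{(n)}$ to the class of a suitable idempotent-cut summand of $P^n$ (the ``symmetrizer'' piece, cut out using the idempotent $e_{i,s}$ in each $B^\G$ factor together with the symmetric-group symmetrizer in $B_n^\G$) and similarly $q_i^{(n)}$ to a summand of $Q^n$; the transposed generators $p_i^{(1^n)}$ are handled via antisymmetrizers. Checking $\pi$ is a well-defined algebra homomorphism means verifying that relations \eqref{rel 1}--\eqref{rel 5} hold among these classes in $K_0(\H_\G)$. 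These are decomposition statements about $Q^m P^n$-type 1-morphisms: using the local relations \eqref{eq:rel1}--\eqref{eq:rel3} one resolves $QP$ in terms of $PQ$ plus (co)units, obtaining a short filtration/direct-sum decomposition whose Grothendieck class is exactly relation \eqref{rel 3} or \eqref{rel 4}; the quantum integer $[k+1]$ appears as the graded dimension of the relevant endomorphism space (endomorphisms of $P$ form a graded algebra isomorphic to $B^\G$, whose graded dimension of the $f_i$-isotypic part produces $t+t^{-1}$, hence $[k+1]$ after taking symmetric powers).

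For \textbf{surjectivity}, one shows the classes $[P], [Q]$ (for all $n$) and their idempotent-completed summands generate $K_0(\H_\G)$ as an algebra. Every 1-morphism of $\H'_\G$ is by definition a word in $P$ and $Q$, so $K_0(\H'_\G)$ is generated by $[P]$ and $[Q]$; passing to the Karoubi envelope only adds summands of these words, and using the relations just established (which let one move all $Q$'s to the right of all $P$'s at the level of Grothendieck classes) every class is a $\k[q,q^{-1}]$-combination of products $[P^{(\lambda)}][Q^{(\mu)}]$ of the symmetrized pieces. Matching these with monomials in the $p_i^{(n)}, q_i^{(n)}$ gives surjectivity of $\pi$, and in fact exhibits a spanning set of $K_0(\H_\G)$ indexed the same way as a basis of $\h_\G$.

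For \textbf{injectivity} — and this is the heart of the matter — one compares two things of ``the same size.'' The algebra $\h_\G$ has a known $\k[t,t^{-1}]$-basis (ordered monomials in $p_i^{(n)}, q_i^{(n)}$, by a PBW-type argument using the defining relations). On the categorical side one must show the corresponding classes in $K_0(\H_\G)$ are linearly independent over $\k[q,q^{-1}]$. The strategy is to compute graded Hom-spaces between the relevant 1-morphisms: if $\Hom^{\bullet}_{\H_\G}$ between two of the basis 1-morphisms has the ``expected'' graded dimension (a single copy of $\k$ in the right degree for equal basis elements, appropriately controlled otherwise), then the classes are independent and $\pi$ is injective. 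Concretely this reduces to understanding $\End_{\H'_\G}(P^n)$ and more generally $\Hom_{\H'_\G}(Q^b P^a, Q^d P^c)$ as graded algebras/modules; the claim that the braid-like subalgebra of $\End_{\H'_\G}(P^n)$ is exactly $B_n^\G$ (stated in the excerpt, to be proved via the Koszul-dual model of Section~\ref{sec:koszul}), together with the curl and bubble relations \eqref{eq:rel2}--\eqref{eq:rel3} which let one reduce any diagram to braid-like form times bubbles, pins these Hom-spaces down.

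The \textbf{main obstacle} is precisely this last faithfulness/size computation: proving the graphically-defined 2-morphism spaces are no larger than expected. This is where the paper's detours are needed — the simplified ``Morita equivalent'' 2-category $\H^\G$ of Section~\ref{sec:graphical2}, which makes the diagrammatic bookkeeping tractable, and the non-derived Fock-space action of Section~\ref{sec:koszul}, which provides a faithful 2-representation forcing the Hom-spaces to be at least as large as needed (so, combined with an upper bound from diagram reduction, exactly the right size). In short: surjectivity and the homomorphism property are diagram manipulations using \eqref{eq:rel1}--\eqref{eq:rel3}; injectivity is the real content and is obtained by sandwiching $\dim_q \Hom_{\H_\G}$ between an upper bound (reduce every diagram to a normal form) and a lower bound (a faithful action on an explicit category), both matching $\h_\G$.
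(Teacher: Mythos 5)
Your construction of $\pi$ and your surjectivity argument track the paper: well-definedness is exactly Proposition \ref{prop:basicrel2}, and surjectivity follows from the structure theory of 1-morphisms in the simplified category $\H^\G$ (no negative-degree endomorphisms, Krull--Schmidt, indecomposables of the form $\prod_i P_i^{\lambda_i}\prod_i Q_i^{\mu_i}$, Littlewood--Richardson), as in Lemma \ref{lem:1}, Proposition \ref{prop:indec} and Corollary \ref{cor:generate}. One small gloss: ``passing to the Karoubi envelope only adds summands of these words'' is not enough by itself — you need the Krull--Schmidt property to conclude that the class of an arbitrary summand lies in the span of the classes $[\prod_i P_i^{\lambda_i}\prod_i Q_i^{\mu_i}]$.

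The genuine gap is in your injectivity step. You propose to pin down $\dim_q \Hom_{\H_\G}$ between the basis 1-morphisms exactly, sandwiching a diagrammatic upper bound against a lower bound coming from a ``faithful'' 2-representation. The paper does not do this, and cannot with the tools it develops: the precise size of $\End_{\H^\G}(P_i^n)$ and of $\End_{\H^\G}(\id)$ is left open (Conjectures \ref{conj:injective} and \ref{conj:iso}), and the 2-representations are not faithful at the level of 2-morphisms in the strong sense you need — for instance the induced map $\End_{\H^\G}(\id_n)\rightarrow HH^*(X_\G^{[n]})$ is explicitly neither injective nor surjective. What the paper actually uses is far weaker and purely decategorified: composing $\pi$ with the map $K_0(\H_\G)\rightarrow \End(\oplus_n K_0(\catC_n^\G))$ induced by the 2-representation of Theorem \ref{thm:main2} yields the Fock space representation of $\h_\G$, which is faithful by an elementary algebraic argument (differential operators on a polynomial algebra); hence the composite is injective, so $\pi$ is injective, and then injectivity plus surjectivity of $K_0(\eta)$ (via Remark \ref{rem:1}) gives the statement for $\H_\G$ itself. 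Your route could in principle be repaired without exact Hom computations — linear independence in the split Grothendieck group needs only Krull--Schmidt, the non-vanishing and pairwise non-isomorphism of the $P^\lambda Q^\mu$ (for which a 2-representation is indeed the required nondegeneracy input), and a triangularity argument relating monomials in $[P_i^{(n)}],[Q_i^{(n)}]$ to the indecomposable classes — but as written, ``both bounds matching $\h_\G$'' demands exact knowledge of the 2-morphism spaces, which is neither available in the paper nor necessary for the theorem.
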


This theorem will be proven in Section \ref{sec:K-theory}. In the rest of this section we want to sketch how the morphism $\pi$ comes about. 

To do this consider various idempotent 2-morphisms in the category $\H_\G'$. Since $M_{n_i}(\k) \subset \k[\G] \subset B^\G \subset \End_{\H'_\G}(P)$, any idempotent $e \in M_{n_i}(\k)$ gives rise (for any $n \in \Z$) to a 1-morphism $(P,e)$ in $\Hom_{\H_\G}(n,n+1)$. Let $\tilde{P}_i = (P,f_i) \in \H_\G$, where the $f_i$ are the pairwise orthogonal central idempotents of $\C[\G]$ described in Section \ref{sec:idemp}.

\begin{lemma}\label{lem:P}
There is an isomorphism of 1-morphisms in the category $\H_\G$ 
$$ P \cong \bigoplus_{i=1}^m \tilde{P}_i $$ 
\end{lemma}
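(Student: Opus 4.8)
The plan is to use the decomposition $1 = \sum_{i=1}^m f_i$ of the identity in $\k[\G]$ into pairwise orthogonal central idempotents recalled in Section~\ref{sec:idemp}. Since $\k[\G] \subset B^\G \subset \End_{\H'_\G}(P)$, this gives a decomposition of the identity $2$-morphism $\id_P = \sum_i f_i$ into pairwise orthogonal idempotents in $\End_{\H'_\G}(P)$. By the universal property of the Karoubi envelope, such a decomposition of $\id_P$ into orthogonal idempotents yields a direct sum decomposition of the object $P$ in $\H_\G$; concretely, $(P, f_i) = \tilde P_i$ and the obvious maps $\tilde P_i \hookrightarrow P$ and $P \twoheadrightarrow \tilde P_i$ (given by $f_i$, viewed appropriately) exhibit $P \cong \bigoplus_{i=1}^m \tilde P_i$.

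First I would make precise the general categorical fact being invoked: in any additive category that is idempotent complete, if $\id_X = \sum_{a} e_a$ with the $e_a$ pairwise orthogonal idempotents, then $X \cong \bigoplus_a (X, e_a)$, with structure maps induced by the $e_a$. Then I would verify the hypothesis: the $f_i$ are idempotent ($f_i^2 = f_i$), pairwise orthogonal ($f_i f_j = 0$ for $i \neq j$), and sum to $\id_P$, all of which hold already inside $\k[\G]$ and hence inside $\End_{\H'_\G}(P)$ via the inclusion $\k[\G] \subset B^\G \hookrightarrow \End_{\H'_\G}(P)$ (the inclusion being given by placing a dot labeled by the relevant element of $\k[\G] \subset B^\G$ on the upward strand, which is an algebra homomorphism by the dot-collision relation). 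Finally I would assemble these into the claimed isomorphism in $\H_\G$, checking that the composites $\tilde P_i \to P \to \tilde P_j$ are $\delta_{ij}\,\id_{\tilde P_i}$ and that $\sum_i (\tilde P_i \to P \to \tilde P_i)$ recovers $\id_P$, which are exactly the orthogonality and completeness relations for the $f_i$.

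I do not expect any serious obstacle here: this is essentially a formal consequence of the definition of the Karoubi envelope together with Maschke's theorem applied to $\k[\G]$ (recall $\k$ has characteristic zero). The only point requiring a little care is making sure one works with the Karoubi envelope $\H_\G$ rather than $\H'_\G$ itself --- the $\tilde P_i$ need not be $1$-morphisms of $\H'_\G$ --- and confirming that the algebra map $\k[\G] \to \End_{\H'_\G}(P)$ really is unital, so that $\sum_i f_i$ maps to $\id_P$ and not merely to some other idempotent. Both are immediate from the graphical calculus. One could also remark that the same argument, applied to the idempotents $e_{i,s}$ instead of the $f_i$, gives a finer decomposition, but for the purposes of Theorem~\ref{thm:main1} the decomposition into the $\tilde P_i$ (one summand per irreducible representation of $\G$) is the relevant one.
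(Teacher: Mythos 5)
Your argument is correct and is essentially the paper's own proof: both use the decomposition $1=\sum_i f_i$ into pairwise orthogonal central idempotents of $\k[\G]\subset\End_{\H'_\G}(P)$ and the idempotent completeness of $\H_\G$ to conclude $P\cong\bigoplus_{i=1}^m\tilde P_i$. The extra checks you spell out (unitality of $\k[\G]\to\End_{\H'_\G}(P)$ and the relations $\tilde P_i\to P\to\tilde P_j=\delta_{ij}\,\id$) are exactly the content of the paper's one-line verification.
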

\begin{proof}
The idempotents $f_i$ define a 2-morphism from
$\bigoplus_{i=1}^m \tilde{P}_i$ to $P$ in $\H_\G$.  Since $1 = \sum_{i=1}^m f_i$ and $f_if_j = \delta_{i,j} f_i$, this 2-morphism is an isomorphism.
\end{proof}

The 1-morphisms $\tilde{P}_i$ can be further decomposed in $\H_\G$ whenever the idempotents $f_i$ are not minimal. Recall the minimal idempotents $e_{i,1}, \dots e_{i,n_i}$ from Section \ref{sec:idemp}. Note that for fixed $i$ the $e_{i,j}$, which are distinct idempotents in $M_{n_i}(\k)$, define isomorphic representations $\C[\G] e_{i,1} \cong \C[\G] e_{i,l}$ of $\C[\G]$.  In particular, the objects $(P,e_{i,j})$ for distinct $j$ are all isomorphic in $\H_\G$.  Let $P_i$ denote the 1-morphism $(P,e_{i,1})\in \H_\G$. Then we have the following

\begin{lemma}
There is an ismorphism in $\H_\G$
$$ \tilde{P_i} \cong \bigoplus_{j=1}^{n_i} P_i^{\oplus n_i} $$
where $n_i$ is the dimension (as a $\k$ vector space) of the $\k[\G]$-module $V_i$. 
\end{lemma}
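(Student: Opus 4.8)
The plan is to repeat the argument of Lemma~\ref{lem:P}, but now refining the coarse decomposition $1 = \sum_i f_i$ into the finer decomposition $f_i = \sum_{s=1}^{n_i} e_{i,s}$ of each central idempotent as a sum of minimal pairwise-orthogonal idempotents, and then observing that all of the resulting summands are isomorphic to $P_i$.

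First I would note that the $e_{i,1},\dots,e_{i,n_i}$ are pairwise orthogonal idempotents in $M_{n_i}(\k) \subset \k[\G] \subset B^\G \subset \End_{\H'_\G}(P)$ summing to $f_i$, and hence define pairwise orthogonal idempotent endomorphisms of $\tilde{P}_i = (P,f_i)$ whose sum is the identity $f_i$ of $\tilde{P}_i$. By exactly the reasoning of Lemma~\ref{lem:P} (additivity of the Karoubi envelope), the 2-morphisms $e_{i,s}\colon (P,e_{i,s}) \to \tilde{P}_i$ therefore assemble into an isomorphism $\tilde{P}_i \cong \bigoplus_{s=1}^{n_i}(P,e_{i,s})$ in $\H_\G$. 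All of the $e_{i,s}$ lie in degree zero, so the superalgebra signs of $B^\G$ play no role and every identity used is simply a matrix identity in $M_{n_i}(\k)$.

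Next I would show that each $(P,e_{i,s})$ is isomorphic in $\H_\G$ to $P_i = (P,e_{i,1})$. Let $u,v \in M_{n_i}(\k) \subset \End_{\H'_\G}(P)$ be the matrix units with single nonzero entry $1$ in positions $(1,s)$ and $(s,1)$ respectively. One checks at once that $e_{i,1}\,u = u = u\,e_{i,s}$ and $e_{i,s}\,v = v = v\,e_{i,1}$, so that $u\colon (P,e_{i,s}) \to P_i$ and $v\colon P_i \to (P,e_{i,s})$ are genuine morphisms in the idempotent completion; moreover $uv = e_{i,1}$ and $vu = e_{i,s}$ are the identity endomorphisms of $P_i$ and of $(P,e_{i,s})$, so $u$ and $v$ are mutually inverse. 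Combining the two steps gives $\tilde{P}_i \cong \bigoplus_{s=1}^{n_i} P_i = P_i^{\oplus n_i}$, and since $\dim_\k V_i = n_i$ this is the asserted decomposition.

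There is no serious obstacle here: the argument is of precisely the same flavour as Lemma~\ref{lem:P}. The only points needing (routine) care are checking that $u$ and $v$ satisfy the defining conditions $g e = g = e' g$ for morphisms $(R,e)\to(R',e')$ in the Karoubi envelope, and the observation that everything happens inside the degree-zero subalgebra $\k[\G]\subset B^\G$, so no super-signs intervene.
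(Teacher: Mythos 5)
Your argument is correct and is essentially the paper's own proof: the paper also decomposes $\tilde{P}_i$ via the orthogonal idempotent refinement $f_i = \sum_{s} e_{i,s}$ exactly as in Lemma \ref{lem:P}, and invokes the fact (stated just before the lemma) that the $(P,e_{i,s})$ are all isomorphic to $P_i$. Your explicit matrix-unit verification of that last isomorphism is just a spelled-out version of what the paper leaves implicit.
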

\begin{proof}
The proof is just as in Lemma \ref{lem:P} using the fact that $f_i = \sum_{j=1}^{n_i} e_{i,j}$. 
\end{proof}

We will prove later in Section \ref{sec:K-theory} that the objects $P_i \in \H_\G$ are indecomposable. 

The canonical adjunctions in $\H'_\G$ involving $P$ and $Q$ (the cups and caps) also allow us to decompose $Q$, since any idempotent $e \in \End_{\H'_\G}(P)$ gives rise to an idempotent $ e \in \End_{\H'_\G}(Q)$. We define 1-morphisms $Q_i \in \H_\G$ by $Q_i= (Q, e_{i,1}).$  It follows immediately by adjunction that $Q_i$ is indecomposable too.

A basic consequence of the relations between 2-morphisms in $\H_\G$ is the following result proved in Section \ref{sec:graphical2}.

\begin{prop}\label{prop:basicrel}
For $i,j\in I_\G$, we have
$$ Q_jP_i \cong 
\begin{cases} 
P_iQ_j \oplus \id \la -1 \ra \oplus \id \la 1 \ra & \text{ if } i=j, \\
P_iQ_j \oplus \id & \text{ if } \la i, j \ra = -1 
\end{cases} 
$$
where $\la \cdot \ra$ denotes the grading shift in $\H_\G$.  
\end{prop}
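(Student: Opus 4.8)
The plan is to establish the two isomorphisms by producing explicit 2-morphisms in $\H'_\G$ that exhibit $Q_jP_i$ as a direct sum, using the defining relation \eqref{eq:rel2} together with the idempotent decomposition of $\k[\G]$. Recall that $P_i = (P, e_{i,1})$ and $Q_j = (Q, e_{j,1})$, so $Q_jP_i$ is the image of the idempotent $e_{j,1} \otimes e_{i,1}$ acting on $QP$. The starting point is the ``sideways'' crossing relation \eqref{eq:rel2}, which expresses the composite $QP \cong PQ \oplus (\text{cup-cap terms})$ in $\H'_\G$ before passing to the Karoubi envelope; more precisely, the left-to-right crossing and its inverse, combined with the cup/cap morphisms, give a direct sum decomposition $QP \cong PQ \oplus T$ where $T = \bigoplus_{b \in \mathcal{B}} (\text{a rank-one piece involving } b, b^\vee)$. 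The contribution $T$ is, as a graded object, $\mathbf{1} \otimes B^\G$ where the $B^\G$ is sitting as the ``bubble'' with a dot labeled by an element of $B^\G$; its graded dimension is $\gdim B^\G$ shifted appropriately, which after accounting for the $\Z$-grading on $B^\G = \Lambda^*(V) \rtimes \G$ (with $\Lambda^0, \Lambda^1, \Lambda^2$ contributing degrees $0,1,2$, and $\omega$ in degree $2$ pairing with $1$ in degree $0$ via the trace) should produce precisely the shifts $\la -1\ra \oplus \la 0 \ra^{?} \oplus \la 1 \ra$ after the idempotents are applied.

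First I would write out carefully how the idempotents $e_{j,1}$ (on the $Q$ strand, i.e.\ on the lower endpoint) and $e_{i,1}$ (on the $P$ strand) act on each summand of the decomposition $QP \cong PQ \oplus T$. On the $PQ$ summand the idempotents act by $e_{i,1}$ on the $P$ and $e_{j,1}$ on the $Q$, yielding exactly $P_iQ_j$. On the summand $T$, the cup and cap morphisms turn the pair of idempotents into the condition that we take $e_{j,1}(\,\cdot\,)e_{i,1}$ inside $B^\G$ after the dot collision, i.e.\ the relevant space becomes $e_{j,1} \cdot B^\G \cdot e_{i,1}$ as a graded vector space, or equivalently $\Hom_{\k[\G]}(V_j, \Lambda^*(V)\otimes V_i)$ up to the identifications in the Remark following the definition of $B^\G$. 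Then $\Hom_{\k[\G]}(V_j, \Lambda^*(V)\otimes V_i) = \bigoplus_{k=0}^{2}\Hom_{\k[\G]}(V_j, \Lambda^k(V)\otimes V_i)$, and since $\Lambda^0(V)$ and $\Lambda^2(V)$ are the trivial representation while $\Lambda^1(V) = V$, the dimension of this space in degree $0$ is $\delta_{ij}$, in degree $1$ is $\dim\Hom_{\k[\G]}(V_j, V\otimes V_i) = \la i,j\ra|_{t=1}$ contribution (which is $0$ if $\la i,j\ra = 0$, and $1$ if $\la i,j\ra = -1$, meaning $i,j$ joined by an edge), and in degree $2$ is again $\delta_{ij}$. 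The degree-$1$ piece must be handled slightly differently since $\Lambda^1(V)$ is not the trivial representation and the trace pairing is between degree $0$ and degree $2$; I would check that the cup/cap grading conventions (cups/caps of the two orientations having degrees $\pm 1$) convert the $\Lambda^1$ contribution into an unshifted copy $\la 0 \ra$. This gives, after taking images of the orthogonal idempotents, $T \cdot (e_{j,1}\otimes e_{i,1}) \cong \id\la -1\ra \oplus \id\la 1\ra$ when $i=j$ (from $\Lambda^0$ and $\Lambda^2$) and $\cong \id$ when $\la i,j\ra = -1$ (from $\Lambda^1$).

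The key steps in order: (1) extract from \eqref{eq:rel2} and the adjunction/biadjunction 2-morphisms an honest direct sum decomposition $QP \cong PQ \oplus T$ in $\H'_\G$, verifying idempotency and orthogonality of the two projections using \eqref{eq:rel1}, \eqref{eq:rel2}, \eqref{eq:rel3}; (2) compute the effect of the idempotents $e_{i,1}, e_{j,1}$ on each summand, reducing the $T$-summand to the graded vector space $\Hom_{\k[\G]}(V_j, \Lambda^*(V)\otimes V_i)$ with its degree grading; (3) decompose $\Lambda^*(V) = \Lambda^0 \oplus \Lambda^1 \oplus \Lambda^2$ as $\G$-representations and match each graded piece with the claimed shift, using the McKay correspondence definition of $\la i,j\ra$ to count multiplicities; (4) assemble the pieces, noting that $\id \in \H_\G$ here means the identity 1-morphism on the appropriate object. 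The main obstacle I anticipate is step (1): verifying that the collection of cup, cap, and crossing 2-morphisms really does split $QP$ as a direct sum in the non-idempotent-completed category $\H'_\G$ requires checking the ``bubble'' and ``curl'' relations \eqref{eq:rel3} carefully to confirm that the projection onto the $PQ$ summand and the projection onto $T$ are complementary idempotents summing to the identity — this is exactly the kind of planar-diagram computation that the paper defers to Section \ref{sec:graphical2}, and it is where the precise choice of signs (from the superalgebra structure on $B^\G$ and the supersymmetry of $\tr$) and the basis-independence of the summation in \eqref{eq:rel2} genuinely matter. A secondary subtlety is getting the grading shift on the $\Lambda^1$-piece exactly right, since naively one might expect a shift but the cup/cap degree conventions conspire to make it trivial; I would double-check this against the $i=j$ case where the $[2] = t + t^{-1}$ appearing in relation \eqref{rel 3} at the decategorified level forces the two shifts $\la -1\ra, \la 1\ra$ and no middle term, consistent with $\Lambda^1(V)$ contributing $\dim\Hom_{\k[\G]}(V_i, V\otimes V_i) = 0$ when $\G$ is nontrivial (as $V$ has no trivial summands).
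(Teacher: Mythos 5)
Your plan is correct, but it takes a different organizational route from the paper. The paper does not prove this proposition by a direct computation in $\H_\G$: it first introduces the auxiliary 2-category $\H^\G$ of Section \ref{sec:graphical2}, whose relations \eqref{eq:rel3'} and \eqref{eq:rel4'} are exactly the idempotent-compressed forms of \eqref{eq:rel2}, proves the (more general) decompositions of Proposition \ref{prop:basicrel2} there by exhibiting explicit mutually inverse 2-morphisms built from sideways crossings and dotted cups and caps, and then transports the isomorphisms to $\H_\G$ along the functor $\eta$ of Proposition \ref{prop:H^G->H_G}, which sends $P_i \mapsto (P,e_{i,1})$ and $Q_i \mapsto (Q,e_{i,1})$. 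You instead stay inside $\H_\G$: first the ``uncompressed'' splitting $QP \cong PQ \oplus \bigoplus_{b \in \mathcal{B}} \id\la \cdot \ra$ coming from \eqref{eq:rel2}, with completeness and orthogonality of the projections checked via \eqref{eq:rel3} (a circle with colliding dots evaluates to $\tr(b^\vee b') = \delta_{b,b'}$, while a crossing composed with a cup is a left curl and hence zero), and then cutting down by $e_{j,1}\otimes e_{i,1}$ and identifying the bubble summand with the graded space $e_{j,1}B^\G e_{i,1} \cong \Hom_{\k[\G]}(V_j,\Lambda^*(V)\otimes V_i)$, whose multiplicities in degrees $0,1,2$ are read off from the McKay correspondence. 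The diagrammatic mechanism is the same in both arguments; in fact the compression you perform at the end is carried out once and for all in the paper's proof of Proposition \ref{prop:H^G->H_G}, where multiplying \eqref{eq:rel2} by $e_{i,1}$ collapses the sum over $\mathcal{B}$ to one or two terms. What your route buys is that, for this proposition alone, you avoid constructing $\H^\G$ and $\eta$ entirely; what it costs is that you must carry the full $B^\G$-labelled calculus (super signs, basis independence of the sum in \eqref{eq:rel2}) through the verification, and the paper's detour pays off later since $\H^\G$ is reused for the general $P_i^{(n)}, Q_j^{(m)}$ decompositions and for Theorem \ref{thm:main1}. The two points you flag are indeed the only ones needing care: the cross-term vanishing in your step (1), and the shift bookkeeping, for which it suffices to note that each bubble term in \eqref{eq:rel2} is homogeneous of degree zero, so the summand split off by a cap carrying a dot of degree $d\in\{0,1,2\}$ is $\id$ shifted by $d-1$ (up to the global sign convention on $\la\cdot\ra$), giving exactly $\la -1\ra,\la 0\ra,\la 1\ra$; finally, your parenthetical justification that $\Hom_{\k[\G]}(V_i,V\otimes V_i)=0$ is literally correct only for one-dimensional $V_i$, but the loop-free property of the affine ADE McKay graph (equivalently, the diagonal entry $\la i,i\ra = t+t^{-1}$ of $C_\G$) gives it in general.
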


More generally, we define 1-morphisms $P_i^{(n)}$ and $Q_i^{(n)}$ by
$$P_i^{(n)} = (P^n,e_{triv,i,1}) \text{ and } Q_i^{(n)} = (Q^n, e_{triv,i,1}) $$
where the idempotent $e_{triv,i,1}$ is a minimal idempotent in $M_{n_i}(\k)^n \rtimes S_n \subset \k[\G^n \rtimes S_n]$ corresponding to the trivial representation of $S_n$. In Section \ref{sec:graphical2} we prove the following proposition.
\begin{prop}\label{prop:basicrel2}
For $i,j\in I_\G$ we have isomorphisms in $\H_\G$:
$$P_i^{(n)} P_j^{(m)} \cong P_j^{(m)} P_i^{(n)} \text{ and } Q_i^{(n)}Q_j^{(m)} \cong Q_j^{(m)} Q_i^{(n)} \text{ for all } i,j \in I_\G, $$
$$ Q_i^{(n)} P_j^{(m)} \cong
\begin{cases}
\bigoplus_{k \ge 0} P_i^{(m-k)}Q_i^{(n-k)} \otimes H^\star(\P^k) \text{ if } i=j \in I_\G, \\
P_j^{(m)} Q_i^{(n)} \oplus P_j^{(m-1)} Q_i^{(n-1)} \text{ if } \la i,j \ra = -1, \\
P_j^{(m)} Q_i^{(n)} \text{ if } \la i,j \ra = 0
\end{cases}$$
where $H^\star(\P^k)$ denotes the graded cohomology of projective spaces $\P^k$ shifted so that it lies in degrees $-k, -k+2 , \dots, k-2, k$. 
\end{prop}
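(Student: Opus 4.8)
The plan is to reduce Proposition~\ref{prop:basicrel2} to Proposition~\ref{prop:basicrel}, together with the symmetric-group combinatorics encoded by the idempotents $e_{triv,i,1}$. First I would establish the ``commuting'' isomorphisms $P_i^{(n)}P_j^{(m)} \cong P_j^{(m)}P_i^{(n)}$ and likewise for the $Q$'s. The point is that $P_i^{(n)}$ is obtained from $P^{n}$ by applying the minimal idempotent corresponding to $V_i^{\boxtimes n}$ tensored with the sign-or-trivial representation of $S_n$ inside $\k[\G^n\rtimes S_n]$; composing $P_i^{(n)}$ with $P_j^{(m)}$ produces $P^{n+m}$ cut out by an idempotent in (a suitable subalgebra of) $\End_{\H'_\G}(P^{n+m})$ which is the product of the two smaller idempotents together with the evident commuting actions of $S_n$ and $S_m$. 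Since dots on distinct strands supercommute and the only crossings involved are upward crossings (with $T_k^2 = 1$ and the braid relation), the subalgebra generated by braid-like diagrams behaves like a semidirect product $B^\G_{n+m}$, and conjugation by the longest element of the coset representatives $S_{n+m}/(S_n\times S_m)$ carries the ``$(n)$ then $(m)$'' idempotent to the ``$(m)$ then $(n)$'' idempotent. This yields the required isomorphism of 1-morphisms in the Karoubi envelope $\H_\G$.

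Next I would handle the mixed relation $Q_i^{(n)}P_j^{(m)}$. The strategy is to start from $Q^{n}P^{m}$, use Proposition~\ref{prop:basicrel} repeatedly to slide each $Q$-strand past each $P$-strand, and then apply the idempotents $e_{triv,i,1}$ (on the $Q$-side) and $e_{triv,j,1}$ (on the $P$-side) and keep track of what survives. In the case $\la i,j\ra = 0$ each elementary move gives $Q_iP_j \cong P_jQ_i$ with no extra terms, so one immediately gets $Q_i^{(n)}P_j^{(m)} \cong P_j^{(m)}Q_i^{(n)}$. In the case $\la i,j\ra = -1$, each elementary move produces one extra copy of the identity 1-morphism $\id$; after moving all $n$ copies of $Q_i$ past all $m$ copies of $P_j$ one obtains a direct sum indexed by how many $Q$'s get ``annihilated'' against $P$'s, but symmetrizing by the trivial idempotents on both sides collapses the combinatorics so that only the terms $P_j^{(m)}Q_i^{(n)}$ and $P_j^{(m-1)}Q_i^{(n-1)}$ remain, matching relation~(\ref{rel 4}). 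The hardest case is $i=j$: here each elementary move gives $QP \cong PQ\oplus\id\la 1\ra\oplus\id\la -1\ra$, so iterating produces many terms with various grading shifts, and after applying the trivial idempotents one must show the surviving multiplicity of $P_i^{(m-k)}Q_i^{(n-k)}$ is exactly $H^\star(\P^k)$, i.e.\ $\la -k\ra\oplus\la -k+2\ra\oplus\cdots\oplus\la k\ra$. This is precisely the categorical analogue of the quantum integer $[k+1]$ appearing in relation~(\ref{rel 3}), and it is where the symmetric-group (Schur--Weyl-type) bookkeeping genuinely has to be carried out.

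To organize the bookkeeping in the $i=j$ case cleanly, I would first prove the ``single-row'' versions: using the adjunction structure and the $\mathfrak{gl}$-type relations (\ref{eq:rel1})--(\ref{eq:rel3}) restricted to the $V_i$-isotypic strands, show that the endomorphism algebra $\End_{\H'_\G}(P_i^{(m)}Q_i^{(n)})$ and the nil-Hecke-like (here, degenerate affine Hecke with $T^2=1$) structure on these strands forces the decomposition of $Q_i^{(n)}P_i^{(m)}$ to be multiplicity-free up to the $H^\star(\P^k)$ factors. Concretely, one computes the graded dimension of $\Hom_{\H_\G}\big(Q_i^{(n)}P_i^{(m)},\, P_i^{(m-k)}Q_i^{(n-k)}\big)$ and checks it equals the graded dimension of $\End$ of the target tensored with $H^\star(\P^k)$; combined with the fact (to be proven in Section~\ref{sec:K-theory}, but available to cite) that the $P_i, Q_i$ are indecomposable and the relevant Hom-spaces are concentrated in non-negative degrees with one-dimensional degree-zero part, this pins down the decomposition. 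The main obstacle I anticipate is exactly this degree-counting step for $i=j$: one has to verify that the alternating/trivial symmetrization really does convert the ``$k$-dimensional worth'' of choices of which strands collide into the balanced graded vector space $H^\star(\P^k)$ rather than some other graded multiplicity space, and getting the grading shifts to come out symmetric (degrees $-k$ through $k$) rather than, say, $0$ through $2k$ requires carefully using the biadjointness grading shifts from relation~(\ref{eq:rel2}) and the degree conventions for cups and caps fixed in Section~\ref{subsec:catH'}.
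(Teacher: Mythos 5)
Your first half (the commutativity isomorphisms and the $\la i,j\ra = 0$ case) is fine and matches the paper's argument, which realizes the isomorphism by an explicit crossing 2-morphism between the two symmetrizers. But for the two substantive cases there is a genuine gap. Your central move — ``use Proposition \ref{prop:basicrel} repeatedly to slide each $Q$ past each $P$, then apply the idempotents $e_{triv}$ and keep track of what survives'' — is not a valid argument as stated: the object $Q_i^{(n)}P_i^{(m)}$ is the image of the idempotent $e_{triv}\otimes e_{triv}$ acting on $Q_i^nP_i^m$, and this idempotent does not preserve the summands of the iterated single-strand decomposition, so ``what survives'' is not defined termwise. One must actually produce the projection and inclusion maps and verify they compose correctly. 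This is exactly what the paper does (in $\H^\G$, transported to $\H_\G$ via $\eta$): it writes down explicit 2-morphisms $f$ and $g$ built from $k$ dotted caps/cups between the $i^n$ and $i^m$ symmetrizers, and proves the graphical identities (\ref{eq:main}) and (\ref{eq:main2}), with nonzero constants $c_{m,n}^{k,l}=k!\binom{m}{k}\binom{n}{k}\binom{k}{l}$, showing $f$ and $g$ are mutually inverse; the symmetric placement of $0\le l\le k$ dots on the caps is what produces the balanced factor $H^\star(\P^k)$. Likewise, in the $\la i,j\ra=-1$ case the collapse to only two summands is not formal bookkeeping: it rests on the specific diagrammatic fact (\ref{eq:zero}) that two $ij$ dots joining a trivial idempotent $i^n$ to a trivial idempotent $j^m$ anticommute and hence kill the diagram, which is why no term with $k>1$ caps appears. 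Your outline never identifies this mechanism.

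Your proposed repair — compute graded dimensions of $\Hom\bigl(Q_i^{(n)}P_i^{(m)},\,P_i^{(m-k)}Q_i^{(n-k)}\bigr)$ and invoke indecomposability and degree bounds ``available to cite'' from Section \ref{sec:K-theory} — does not close the gap and risks circularity. Lemma \ref{lem:1} only controls endomorphisms of words with all $P$'s to the left of all $Q$'s (the paper explicitly notes the argument fails when a $P$ sits to the right of a $Q$, as in $Q_i^{(n)}P_i^{(m)}$), and at this stage of the paper the graphical calculus only yields spanning sets, i.e.\ upper bounds on Hom spaces; the lower bounds (nondegeneracy) come from the 2-representation and the Grothendieck-group computation, whose proofs in turn use Proposition \ref{prop:basicrel2} through the definition of $\pi$. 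So the Hom dimensions you would need are not known independently of the decomposition you are trying to prove. The honest way through is the paper's: exhibit the inverse pair $(f,g)$ and verify the two composition identities directly in the graphical calculus.
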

Notice that in the third equation above the graded dimension of $H^\star(\P^k)$ is the quantum integer $[k+1]$. By convention $H^\star(\P^k) = 0$ if $k < 0$. Also, $P_i^{(l)} = Q_i^{(l)} = 0$ when $l < 0$ so the summand corresponding to a $k > \mbox{min}(n,m)$ is zero (so the direct sum on the right is finite).  

It follows immediately from Proposition \ref{prop:basicrel2} that there is a well-defined morphism
$$ \pi: \h_\G \longrightarrow K_0(\H_\G) $$
given by sending $p_i^{(n)}$ to the class $[P_i^{(n)}]$ and sending $q_j^{(m)}$ to the class
$[Q_j^{(m)}]$.  That the map $\pi$ is an isomorphism is a rigorous formulation of the informal
statement ``$\H_\G$ categorifies the Heisenberg algebra."

\subsection{Remarks on relations in $\H'_\G$}\label{sec:relremarks}

We end this section with some remarks on the relations from lines (\ref{eq:rel1}) - (\ref{eq:rel3}). 
The second relation in (\ref{eq:rel3}) is natural for degree reasons; in fact imposing this relation is equivalent to declaring that the object $P$ does not have negative degree endomorphisms.

Moreover, if we believe that the identity should not have negative degree endomorphisms then the left relation in (\ref{eq:rel3}) is immediate unless $a$ has degree 2, that is, unless $a$ is a linear combination of elements of the form $(v_1 \wedge v_2, \gamma)$. Since 
$$(v_1 \wedge v_2, \gamma) = (v_1, \gamma) \cdot (\gamma^{-1}v_2, 1),$$ 
from moving $\gamma^{-1}v_2$ around the circle we get 
\begin{equation*}
\begin{tikzpicture}[>=stealth]
\draw [->](0,0) arc (180:360:0.5cm) [very thick];
\draw (1,0) arc (0:180:0.5cm) [very thick];
\filldraw [blue](0,0) circle (2pt);
\draw (0,0) node [anchor=east] {$(v_1 \wedge v_2, \gamma)$};
\draw (1.75,0) node{$=$};
\draw [->](5,0) arc (180:360:0.5cm) [very thick];
\draw (6,0) arc (0:180:0.5cm) [very thick];
\filldraw [blue](5,0) circle (2pt);
\draw (5,0) node [anchor=east] {$(- \gamma^{-1} v_2 \wedge v_1, \gamma)$};
\end{tikzpicture}
\end{equation*}
where the minus sign is a consequence of passing the dot labeled $(v_1, \gamma)$ by the dot labeled $(\gamma^{-1} v_2, 1)$. Since $\gamma$ acts nontrivially on $V$, it follows that the couterclockwise circle with a solid dot labeled by $(v_1 \wedge v_2,\gamma)$ is zero unless $\gamma = 1$.

To understand the relation imposed when $\gamma=1$, we add a cup at the bottom of the first relation in (\ref{eq:rel2}). Simplifying, we obtain  
\begin{equation*}
\begin{tikzpicture}[>=stealth]
\draw (0,0) -- (0,2)[->][very thick];
\draw (1,1) node{$=$};
\draw (5,0) -- (5,2)[->][very thick];
\draw [->](3,1) arc (180:360:0.5cm) [very thick];
\draw (4,1) arc (0:180:0.5cm) [very thick];
\filldraw [blue](3,1) circle (2pt);
\draw (3,1) node [anchor=east] {$v_1 \wedge v_2$};
\end{tikzpicture}
\end{equation*}
It follows that if the closed diagram which appears above is a scalar then it must be equal to one. Alternatively, if we multiply the upward strand by $v_1 \wedge v_2$ and close off we get that the counterclockwise circle with a solid dot labeled by $v_1\wedge v_2$ is an idempotent. So, if this idempotent were not equal to one, then the identity map would break up into a direct sum of multiple 1-morphisms in the idempotent completion.  This would result in extra unwanted 1-morphisms in our Heisenberg category $\H_\G$. 

\subsubsection{Further relations}

Although left-twist curls on an upward pointing strand are zero right-twist curls (which have degree $2$) and are not necessarily zero. As shorthand, we will draw right twist curls as hollow dots:
$$
\begin{tikzpicture}
\draw  [shift={+(3,0)}](1,0) .. controls (1,.5) and (.3,.5) .. (.1,0) [very thick];
\draw  [shift={+(3,0)}](1,0) .. controls (1,-.5) and (.3,-.5) .. (.1,0) [very thick];
\draw  [shift={+(3,0)}](0,-1) .. controls (0,-.5) .. (.1,0) [very thick];
\draw  [shift={+(3,0)}](.1,0) .. controls (0,.5) .. (0,1) [->] [very thick];
\draw  (2.5,0) node {$:=$};
\draw  (2,-1)--(2,1)[very thick] [->];
\draw [red] (2,0) circle (4pt);
\end{tikzpicture}
.
$$
To pass a solid dot past a hollow dot involves sliding the solid dot along the right-twist curl. Thus solid dots commute with hollow dots 

$$\begin{tikzpicture}[>=stealth]\draw (0,0) -- (0,2)[->][very thick];
\draw [red] (0,1.33) circle (4pt);
\draw (0,.66) node [anchor=east] [black] {$b$};
\filldraw [blue] (0,.66) circle (2pt);
\draw (.5,1) node {=};
\draw (1,0) -- (1,2)[->][very thick];
\draw [red] (1,.66) circle (4pt);
\filldraw [blue] (1,1.33) circle (2pt);
\draw (1,1.33) node [anchor=west] [black] {$b$};
\end{tikzpicture}
.
$$
Hollow dots have an interesting ``affine Hecke" type relation with crossings which involves the creation of labeled solid dots,
$$
\begin{tikzpicture}[>=stealth]
\draw [->](0,0) -- (1,1) [very thick];
\draw [->](1,0) -- (0,1) [very thick];
\draw [red](.25,.25) circle (4pt);
\draw (1.25,.5) node{$=$};
\draw [->](1.5,0) -- (2.5,1) [very thick];
\draw [->](2.5,0) -- (1.5,1) [very thick];
\draw [red](2.25,.75) circle (4pt);

\draw (3.2,.5) node{$+\sum_{b \in \mathcal{B}}$};

\draw [shift={+(4.0,0)}][->](0,0) -- (0,1) [very thick];
\draw [shift={+(4.0,0)}][->](.75,0) -- (.75,1) [very thick];
\filldraw [shift={+(4.0,0)}][blue](0,.33) circle (2pt);
\draw [shift={+(4.0,0)}](0,.33) node [anchor=west]{$b$};
\filldraw [shift={+(4.0,0)}][blue](.75,.66) circle (2pt);
\draw [shift={+(4.0,0)}] (.75,.66) node [anchor=west]{$b^\vee$};
\end{tikzpicture}
$$
$$
\begin{tikzpicture}[>=stealth]
\draw [->](0,0) -- (1,1) [very thick];
\draw [->](1,0) -- (0,1) [very thick];
\draw [red](.25,.75) circle (4pt);
\draw (1.25,.5) node{$=$};
\draw [->](1.5,0) -- (2.5,1) [very thick];
\draw [->](2.5,0) -- (1.5,1) [very thick];
\draw [red](2.25,.25) circle (4pt);

\draw (3.2,.5) node{$+\sum_{b \in \mathcal{B}}$};

\draw [shift={+(4.0,0)}][->](0,0) -- (0,1) [very thick];
\draw [shift={+(4.0,0)}][->](.75,0) -- (.75,1) [very thick];
\filldraw [shift={+(4.0,0)}][blue](0,.33) circle (2pt);
\draw [shift={+(4.0,0)}](0,.33) node [anchor=west]{$b$};
\filldraw [shift={+(4.0,0)}][blue](.75,.66) circle (2pt);
\draw [shift={+(4.0,0)}] (.75,.66) node [anchor=west]{$b^\vee$};
\end{tikzpicture}
$$
where again the summations are over the basis $\mathcal{B}$.  These relations are reminiscent of relations in the degenerate affine Hecke algebra associated to the symmetric group and its wreath products, see for instance \cite{RS,W,WW}.  The proof of these relations is an easy  calculation using the defining relations for 2-morphisms in $\H'_\G$.

Finally, it also follows directly from the graphical relations that the triple point move,
$$
\begin{tikzpicture}[>=stealth]
\draw (0,0) -- (2,2)[very thick];
\draw (2,0) -- (0,2)[very thick];
\draw (1,0) .. controls (0,1) .. (1,2)[very thick];
\draw (2.5,1) node {=};
\draw (3,0) -- (5,2)[very thick];
\draw (5,0) -- (3,2)[very thick];
\draw (4,0) .. controls (5,1) .. (4,2)[very thick];
\end{tikzpicture}
$$
which was defined to hold when all three strands are oriented up, in fact holds in all possible orientations.  

\section{A 2-representation of $\H_\G$}\label{sec:action}

In this section we construct an action of $\H_\G$ on the derived categories of finite dimensional graded modules over certain algebras $A^\G_n$. This induces an action on derived categories of coherent sheaves on Hilbert schemes and allows us to show in the next section that our categories $\H_\G$ are non-degenerate (subsequently proving Theorem \ref{thm:main1}). 

\subsection{Categories}\label{sec:cats}

Since $\G \subset SL_2(\k)$ acts on $V \cong \k^2$ it acts naturally on $\Sym^*(V^\vee)$ and we denote by $\Sym^*(V^\vee) \rtimes \G$ their semi-direct product (also known as the smash product $\Sym^*(V^\vee)\#\k[\G]$). For instance, if $\G = \Z/n\Z$, identified as $n$th roots of unity, then we can identify $\Sym^*(V^\vee)$ with $\k[x,y]$ and the action is $\zeta \cdot (x,y) = (\zeta x, \zeta^{-1} y)$. $\Sym^*(V^\vee) \rtimes \G$ is a $\k$-algebra,  and an element of $\Sym^*(V^\vee) \rtimes \G$ is a linear combination of elements of the form $(f, \gamma)$ where $f \in \Sym^*(V^\vee)$ and $\gamma \in \G$ with multiplication given by 
$$(f, \gamma) \cdot (f', \gamma') = (f \gamma \cdot f', \gamma \gamma').$$

For each non-negative integer $n$ define 
$$A_n^\G := [(\Sym^*(V^\vee) \rtimes \G) \otimes (\Sym^*(V^\vee) \rtimes \G) \otimes \dots \otimes (\Sym^*(V^\vee) \rtimes \G)] \rtimes S_n$$
where $S_n$ is the symmetric group acting by permuting the $n$ terms in the product. Notice that here we use the ordinary tensor product and not the super tensor product used to define $B_n^\G$. These algebras inherit the natural grading from $\Sym^*(V^\vee)$. We denote by $D(A_n^\G \dgmod)$ the bounded derived category of finite dimensional, graded (left) $A_n^\G$-modules. 

We have maps 
$$\k[\G] \xrightarrow{i} A_1^\G \xrightarrow{p} \k[\G]$$ 
where the first denotes the natural inclusion of the group algebra $\k[\G]$ and the second is the projection which takes $\Sym^{>0}(V)$ to zero. Thus any $\k[\G]$-module is also an $A_1^\G$-module and vice versa. 

\subsection{Functors $\fP$ and $\fQ$}

Consider the natural inclusion $A_n^\G \otimes A_1^\G \rightarrow A_{n+1}^\G$ making use of the embedding $S_n = S_n \times S_1 \hookrightarrow S_{n+1}$. This gives $A_{n+1}^\G$ and $A_n^\G \otimes \k[\G]$ the structure of a module over $A_n^\G \otimes A_1^\G$. 

Define the $(A_{n+1}^\G,A_n^\G)$ bimodule
$$P^\G(n) := A_{n+1}^\G \otimes_{A_n^\G \otimes A_1^\G} (A_n^\G \otimes \k[\G])$$
and the $(A_n^\G,A_{n+1}^\G)$ bimodule
$$(n)Q^\G := (A_n^\G \otimes \k[\G]) \otimes_{A_n^\G \otimes A_1^\G} A_{n+1}^\G.$$

Define the functor $\fP(n): D(A_n^\G \dgmod) \rightarrow D(A_{n+1}^\G \dgmod)$ by
$$\fP(n)(\cdot) := P^\G(n) \otimes_{A_n^\G} (\cdot).$$
Similarly, we define $(n)\fQ: D(A_{n+1}^\G \dgmod) \rightarrow D(A_n^\G \dgmod)$ by
$$(n)\fQ(\cdot) := (n)Q^\G \otimes_{A_{n+1}^\G} (\cdot) [-1]\{1\}$$
where $[\cdot]$ denotes the cohomological shift while $\{\cdot\}$ the grading shift. The relationship between these shifts and the shift $\la \cdot \ra$ in $\H_\G$ is that $\la 1 \ra = [1]\{-1\}$, as we will see. We will also usually omit the $(n)$ from the notation for functors and just write $\fP$ or $\fQ$. 

\subsection{Natural transformations}

In order to define an $\H_\G$ action on $\oplus_n D(A_n^\G \dgmod)$ we also need to define the following natural transformations:
\begin{enumerate}
\item $X(v): \fP \rightarrow \fP [1]\{-1\}$ and $X(v): \fQ \rightarrow \fQ [1]\{-1\}$ for any $v \in V$,
\item $X(\gamma): \fP \rightarrow \fP$ and $X(\gamma): \fQ \rightarrow \fQ$ for any $\gamma \in \G$,
\item $T: \fP \fP \rightarrow \fP \fP$, $T: \fQ \fQ \rightarrow \fQ \fQ$, $T: \fQ \fP \rightarrow \fP \fQ$ and $T: \fP \fQ \rightarrow \fQ \fP$,  
\item $\adj: \fQ \fP \rightarrow \id [-1]\{1\}$ and $\adj: \fP \fQ \rightarrow \id [1]\{-1\}$, and
\item $\adj: \id \rightarrow \fQ \fP [-1]\{1\}$ and $\adj: \id \rightarrow \fP \fQ [1]\{-1\}$. 
\end{enumerate}

These natural transformations define the action of the following 2-morphisms in $\H'_\G$ (as usual we read the diagrams from the bottom up):
\begin{enumerate}
\item 
\begin{tikzpicture}[>=stealth]
\draw [->](0,0) -- (0,1) [very thick];
\filldraw [blue](0,.5) circle (2pt);
\draw (0,.5) node [anchor=west] [black] {$v$};
\end{tikzpicture}
and   
\begin{tikzpicture}[>=stealth]
\draw [<-](0,0) -- (0,1) [very thick];
\filldraw [blue](0,.5) circle (2pt);
\draw (0,.5) node [anchor=west] [black] {$v$};
\end{tikzpicture}

\item
\begin{tikzpicture}[>=stealth]
\draw [->](0,0) -- (0,1) [very thick];
\filldraw [blue](0,.5) circle (2pt);
\draw (0,.5) node [anchor=west] [black] {$\gamma$};
\end{tikzpicture}
and  
\begin{tikzpicture}[>=stealth]
\draw [<-](0,0) -- (0,1) [very thick];
\filldraw [blue](0,.5) circle (2pt);
\draw (0,.5) node [anchor=west] [black] {$\gamma$};
\end{tikzpicture}
\item
\begin{tikzpicture}[>=stealth]
\draw [->](0,0) -- (1,1) [very thick];
\draw [->](1,0) -- (0,1) [very thick];
\end{tikzpicture}
,
\begin{tikzpicture}[>=stealth]
\draw [<-](0,0) -- (1,1) [very thick];
\draw [<-](1,0) -- (0,1) [very thick];
\end{tikzpicture}
,
\begin{tikzpicture}[>=stealth]
\draw [<-](0,0) -- (1,1) [very thick];
\draw [->](1,0) -- (0,1) [very thick];
\end{tikzpicture}
and
\begin{tikzpicture}[>=stealth]
\draw [->](0,0) -- (1,1) [very thick];
\draw [<-](1,0) -- (0,1) [very thick];
\end{tikzpicture}

\item
\begin{tikzpicture}[>=stealth]
\draw (0,0) arc (180:0:.5) [<-,very thick];
\end{tikzpicture}
and
\begin{tikzpicture}[>=stealth]
\draw (2,0) arc (180:0:.5) [->,very thick];
\end{tikzpicture}

\item
\begin{tikzpicture}[>=stealth]
\draw (0,0) arc (180:360:.5) [->,very thick];
\end{tikzpicture}
and
\begin{tikzpicture}[>=stealth]
\draw (2,0) arc (180:360:.5) [<-,very thick];
\end{tikzpicture}
\end{enumerate}

\subsubsection{Preliminaries}

Since we will deal with tensor products of complexes we briefly review some standard conventions. Given two complexes 
$$A_\bullet = \dots \rightarrow A_i \xrightarrow{d} A_{i+1} \rightarrow \dots \text{ and }
B_\bullet = \dots \rightarrow B_i \xrightarrow{d} B_{i+1} \rightarrow \dots$$
the tensor product $A_\bullet \otimes B_\bullet$ is the complex with terms $\oplus_{i+j=k} A_i \otimes B_j$ and differential
$$A_i \otimes B_j \xrightarrow{d \otimes 1 + (-1)^i 1 \otimes d} A_{i+1} \otimes B_j \oplus A_i \otimes B_{j+1}.$$
Given a map $f: A_\bullet \rightarrow A_\bullet [k]$ of complexes induced by $f_i: A_i \rightarrow A_{i+k}$ we get a map $f \otimes 1: A_\bullet \otimes B_\bullet \rightarrow A_\bullet \otimes B_\bullet [k]$ induced by 
$$(f \otimes 1)_i = f_i \otimes 1: A_i \otimes B_j \rightarrow A_{i+k} \otimes B_j.$$ 
Similarly, given a map $g: B_\bullet \rightarrow B_\bullet [k]$ induced by $g_i: B_i \rightarrow B_{i+k}$ we get a map $1 \otimes g$ induced by 
$$(1 \otimes g)_i = (-1)^{ik} g_i \otimes 1: A_i \otimes B_j \rightarrow A_i \otimes B_{j+k}.$$
Notice that if $k=1$ then $(f \otimes 1)$ and $(1 \otimes g)$ anti-commute. 

\subsubsection{Definition of $X \in \End(P)$}

We define a map $X(\gamma): \k[\G] \rightarrow \k[\G]$ of left $A_1^\G$-modules via multiplication on the right $\gamma' \mapsto \gamma' \cdot \gamma$. This induces a map $X(\gamma): P^\G(n) \rightarrow P^\G(n)$ of $(A_{n+1}^\G, A_n^\G)$-bimodules.

To define $X(v)$ we have to work harder. Consider $A_1^\G \otimes V^\vee$ where the tensor product is over $\k$ (so $A_1^\G$ only acts on the left factor by multiplication on the left). Then the multiplication map $\Sym^*(V^\vee) \otimes V^\vee \rightarrow \Sym^*(V^\vee)$ induces a map $d: A_1^\G \otimes V^\vee \rightarrow A_1^\G$ of left $A_1^\G$-modules via 
$$(f, \gamma) \otimes w \mapsto (f \gamma \cdot w, \gamma)$$ 
where $f \in \Sym^*(V)$, $\gamma \in \G$ and $w \in V^\vee$. Similarly, we have a map $d: A_1^\G \otimes \wedge^2 V^\vee \rightarrow A_1^\G \otimes V^\vee$ given by 
$$(f, \gamma) \otimes (w_1 \wedge w_2) \mapsto (f (\gamma w_1), \gamma) \otimes w_2 - (f (\gamma w_2), \gamma) \otimes w_1.$$ 

Using these maps we can define the following free resolution of the left $A_1^\G$-module $\k[\G]$:
\begin{equation}\label{eq:freeres}
0 \rightarrow A_1^\G \otimes \wedge^2 V^\vee \xrightarrow{d} A_1^\G \otimes V^\vee \xrightarrow{d} A_1^\G \rightarrow \k[\G].
\end{equation}

Suppose $v \in V$. We need to define a map $X(v): \k[\G] \rightarrow \k[\G][1]\{-1\}$ inside $D(A_1^\G \dgmod)$. To do this define
$$\phi_v: A_1^\G \otimes V^\vee \rightarrow A_1^\G \{-1\}$$
to be the map induced by $(f, \gamma) \otimes w \mapsto \la w, v \ra (f, \gamma)$, where $\la \cdot, \cdot \ra : V^\vee \otimes V \rightarrow \k$ is the natural pairing. It is easy to check this map is a map of graded left $A_1^\G$-modules.

Similarly, define 
$$\phi'_v: A_1^\G \otimes \wedge^2 V^\vee \rightarrow A_1^\G \otimes V^\vee \{-1\}$$
to be the map induced by 
$$(f, \gamma) \otimes (w_1 \wedge w_2) \mapsto \la w_1, v \ra (f, \gamma) \otimes w_2 - \la  w_2, v \ra (f, \gamma) \otimes w_1.$$

Using these maps we can write down the following commutative diagram 
\begin{equation}\label{eq:X1def}
\xymatrix{
0 \ar[r] \ar[d] & A_1^\G \otimes \wedge^2 V^\vee \ar[r]^{d} \ar[d]^{\phi'_v} & A_1^\G \otimes V^\vee \ar[r]^{d} \ar[d]^{\phi_v} & A_1^\G \ar[d] \\
A_1^\G \otimes \wedge^2 V^\vee \{-1\} \ar[r]^{-d} & A_1^\G \otimes V^\vee \{-1\} \ar[r]^{-d} & A_1^\G \{-1\} \ar[r] & 0 
}
\end{equation}
The commutativity of the middle square is an easy exercise (note that the differential is $-d$ in the second row since shifting the complex by one negates the differential). 

The map from (\ref{eq:X1def}) induces a map $X(v): \k[\G] \rightarrow \k[\G][1]\{-1\}$ and subsequently a map $X(v): P^\G(n) \rightarrow P^\G(n)[1]\{-1\}$ (in the derived category) of graded $(A_{n+1}^\G, A_n^\G)$-bimodules.  

\subsubsection{Definition of $X \in \End(Q)$} 

If $\gamma \in \G$ then multiplication on the left induces a map $X(\gamma): \k[\G] \rightarrow \k[\G]$ of right $A_1^\G$-modules via $\gamma' \mapsto \gamma \cdot \gamma'$ and subsequently a map $X(\gamma): (n)Q^\G \rightarrow (n)Q^\G$ of $(A_n^\G, A_{n+1}^\G)$-bimodules. 

The map $X(v): (n)Q^\G \rightarrow (n)Q^\G [1]\{-1\}$ is defined by using a resolution of $Q^\G$ as above. We consider $V^\vee \otimes A_1^\G$ where $A_1^\G$ only acts on the second factor from the right. We have the maps 
$$d: V^\vee \otimes A_1^\G \rightarrow A_1^\G \text{ and } d: \wedge^2 V^\vee \otimes A_1^\G \rightarrow V^\vee \otimes A_1^\G$$ 
of right $A_1^\G$-modules via 
$$w \otimes (f, \gamma) \mapsto (wf, \gamma) \text{ and }(w_1 \wedge w_2) \otimes (f, \gamma) \mapsto w_1 \otimes (w_2f, \gamma) - w_2 \otimes (w_1f, \gamma).$$

Using these maps we have a free resolution of the right $A_1^\G$-module $\k[\G]$ as in (\ref{eq:freeres})
\begin{equation}\label{eq:freeres2}
0 \rightarrow \wedge^2 V^\vee \otimes A_1^\G \xrightarrow{d} V^\vee \otimes A_1^\G \xrightarrow{d} A_1^\G \rightarrow \k[\G].
\end{equation}

Then we can write a map $X(v): \k[\G] \rightarrow \k[\G][1]\{-1\}$ of graded right $A_1^\G$-modules as in Equation (\ref{eq:X1def}) except that we use the maps 
$$\psi_v: V^\vee \otimes A_1^\G \rightarrow A_1^\G \{-1\} \text{ and }
\psi'_v: \wedge^2 V^\vee \otimes A_1^\G \rightarrow V^\vee \otimes A_1^\G \{-1\}$$
given by 
$$w \otimes (f, \gamma) \mapsto - \la v, w \ra (f, \gamma) \text{ and } (w_1 \wedge w_2) \otimes (f, \gamma) \mapsto \la v, w_1 \ra w_2 \otimes (f, \gamma) - \la v, w_2 \ra w_1 \otimes (f, \gamma).$$

Using these maps we can again write down the following commutative diagram
\begin{equation}\label{eq:X2def}
\xymatrix{
0 \ar[r] \ar[d] & \wedge^2 V^\vee \otimes A_1^\G \ar[r]^d \ar[d]^{\psi'_v} & V^\vee \otimes A_1^\G \ar[r]^d \ar[d]^{\psi_v} & A_1^\G \ar[d] \\
\wedge^2 V^\vee \otimes A_1^\G \{-1\} \ar[r]^{-d} & V^\vee \otimes A_1^\G \{-1\} \ar[r]^{-d} & A_1^\G \{-1\} \ar[r] & 0.
}
\end{equation}
This induces a map $X(v): \k[\G] \rightarrow \k[\G][1]\{-1\}$ and subsequently a map $X(v): (n)Q^\G \rightarrow (n)Q^\G[1]\{-1\}$ (in the derived category) of graded $(A_n^\G, A_{n+1}^\G)$-bimodules. 

\subsubsection{Definition of $T \in \End(P^2)$ and $T \in \End(Q^2)$}

To define $T: \fP \fP \rightarrow \fP \fP$ we need a map 
$$P^\G(n+1) \otimes_{A_{n+1}^\G} P^\G(n) \rightarrow P^\G(n+1) \otimes_{A_{n+1}^\G} P^\G(n).$$
Now 
\begin{eqnarray*}
P^\G(n+1) \otimes_{A_{n+1}^\G} P^\G(n) 
&\cong& A_{n+2}^\G \otimes_{A_{n+1}^\G \otimes A_1^\G} (A_{n+1}^\G \otimes \k[\G]) \otimes_{A_n^\G \otimes A_1^\G} (A_n^\G \otimes \k[\G]) \\
&\cong& A_{n+2}^\G \otimes_{A_n^\G \otimes A_1^\G \otimes A_1^\G} (A_n^\G \otimes \k[\G] \otimes \k[\G]) \\
&\cong& A_{n+2}^\G \otimes_{A_n^\G \otimes A_2^\G} A_n^\G \otimes \left(A_2^\G \otimes_{A_1^\G \otimes A_1^\G} \k[\G] \otimes \k[\G] \right). 
\end{eqnarray*}
Now define a map 
$T: A_2^\G \otimes_{A_1^\G \otimes A_1^\G} (\k[\G] \otimes \k[\G]) \rightarrow A_2^\G \otimes_{A_1^\G \otimes A_1^\G} (\k[\G] \otimes \k[\G])$
by
\begin{equation}\label{eq:Tdef}
a \otimes (\gamma \otimes \gamma') \mapsto a s_1 \otimes (\gamma' \otimes \gamma)
\end{equation}
where $a \in A_2^\G$ and $s_1 \in S_2$ is the transposition $(1,2)$. 

Let us check that this induces a well defined map. Suppose $a = (a',a'')$ where $a', a'' \in A_1^\G$. Then 
\begin{equation}\label{eq:1}
T(a \otimes (\gamma \otimes \gamma')) = T(1 \otimes (a' \cdot \gamma \otimes a'' \cdot \gamma')) = s_1 \otimes (a'' \gamma' \otimes a' \gamma).
\end{equation}
On the other hand
$$(a', a'') s_1 \otimes (\gamma' \otimes \gamma) = s_1 (a'',a') \otimes (\gamma' \otimes \gamma) = s_1 \otimes (a'' \gamma', a' \gamma)$$
which agrees with (\ref{eq:1}). This shows that $T$ is well defined. 

Notice that the induced map 
$$T: A_{n+2}^\G \otimes_{A_n^\G \otimes A_1^\G \otimes A_1^\G} (A_n^\G \otimes \k[\G] \otimes \k[\G]) \rightarrow A_{n+2}^\G \otimes_{A_n^\G \otimes A_1^\G \otimes A_1^\G} (A_n^\G \otimes \k[\G] \otimes \k[\G])$$
is given by 
$$1 \otimes (1 \otimes \gamma \otimes \gamma') \mapsto s_{n+1} \otimes (1 \otimes \gamma' \otimes \gamma)$$
where $s_{n+1} = (n+1, n+2)$. 

The map $T: \fQ \fQ \rightarrow \fQ \fQ$ is defined in exactly the same way. 

\subsubsection{Definition of $T: QP \rightarrow PQ$}
To define $T: \fQ \fP \rightarrow \fP \fQ$ we need a map 
$$(A_n^\G \otimes \k[\G]) \otimes_{A_n^\G \otimes A_1^\G} A_{n+1}^\G \otimes_{A_n^\G \otimes A_1^\G} (A_n^\G \otimes \k[\G]) \rightarrow A_n^\G \otimes_{A_{n-1}^\G \otimes A_1^\G} (A_{n-1}^\G \otimes \k[\G] \otimes \k[\G]) \otimes_{A_{n-1}^\G \otimes A_1^\G} A_n^\G$$
of $(A_n^\G, A_n^\G)$-bimodules. To define this map it suffices to say where to take $(1 \otimes \gamma) \otimes a \otimes  (1 \otimes \gamma')$ where $a \in A_{n+1}^\G$ equals $1$ or $s_n = (n, n+1)$. If $a=1$ then we map it to zero while
$$(1 \otimes \gamma) \otimes s_n \otimes  (1 \otimes \gamma') \mapsto 1 \otimes (1 \otimes \gamma' \otimes \gamma) \otimes 1.$$

\subsubsection{Definition of $T: PQ \rightarrow QP$}
To define $T: \fP \fQ \rightarrow \fQ \fP$ we need a map 
$$A_n^\G \otimes_{A_{n-1}^\G \otimes A_1^\G} (A_{n-1}^\G \otimes \k[\G] \otimes \k[\G]) \otimes_{A_{n-1}^\G \otimes A_1^\G} A_n^\G \rightarrow (A_n^\G \otimes \k[\G]) \otimes_{A_n^\G \otimes A_1^\G} A_{n+1}^\G \otimes_{A_n^\G \otimes A_1^\G} (A_n^\G \otimes \k[\G])$$
of $(A_n^\G, A_n^\G)$-bimodules. This map is uniquely defined by 
$$1 \otimes (1 \otimes 1 \otimes 1) \otimes 1 \mapsto (1 \otimes 1) \otimes s_n \otimes (1 \otimes 1).$$

\subsubsection{Definition of $adj: QP \rightarrow id [-1]\{1\}$}

We need a map of $(A_n^\G, A_n^\G)$-bimodules
$$adj: (n)Q^\G \otimes_{A_{n+1}^\G} P^\G(n) \rightarrow A_n^\G.$$
Now 
$$(n)Q^\G \otimes_{A_{n+1}^\G} P^\G(n) = (A_n^\G \otimes \k[\G]) \otimes_{A_n^\G \otimes A_1^\G} A_{n+1}^\G \otimes_{A_n^\G \otimes A_1^\G} (A_n^\G \otimes \k[\G]).$$
Notice that $\k[S_{n+1}]$ as a $\k[S_n]$-bimodule is free and generated by $1$ and $s_n = (n, n+1)$. So the map $\k[S_{n+1}] \rightarrow \k[S_n]$ of $\k[S_n]$-bimodules given by $1 \mapsto 1$ and $s_n \mapsto 0$ induces a map $A_{n+1}^\G \rightarrow A_n^\G \otimes A_1^\G$ of $A_n^\G \otimes A_1^\G$-bimodules. Subsequently we obtain a map 
\begin{eqnarray}\label{eq:2}
(n)Q^\G \otimes_{A_{n+1}^\G} P^\G(n) 
&\rightarrow& (A_n^\G \otimes \k[\G]) \otimes_{A_n^\G \otimes A_1^\G} (A_n^\G \otimes A_1^\G) \otimes_{A_n^\G \otimes A_1^\G} (A_n^\G \otimes \k[\G]) \\
&\cong& (A_n^\G \otimes \k[\G]) \otimes_{A_n^\G \otimes A_1^\G} (A_n^\G \otimes \k[\G]).
\end{eqnarray}
Using the composition $\k[\G] \otimes_{A_1^\G} \k[\G] \rightarrow \k[\G] \xrightarrow{\tr} \k$ where the first map is multiplication and the second is the trace map (normalized so that $\tr(1) = 1$) we get a morphism 
$$(A_n^\G \otimes \k[\G]) \otimes_{A_n^\G \otimes A_1^\G} (A_n^\G \otimes \k[\G]) \rightarrow A_n^\G \otimes_{A_n^\G} A_n^\G = A_n^\G.$$
Composing with (\ref{eq:2}) defines $adj: (n)Q^\G \otimes_{A_{n+1}^\G} P^\G(n) \rightarrow A_n^\G$. 

\subsubsection{Definition of $adj: PQ \rightarrow id[1]\{-1\}$} 

We need a map of $(A_{n+1}^\G, A_{n+1}^\G)$-bimodules 
$$P^\G(n) \otimes_{A_n^\G} (n)Q^\G \rightarrow A_{n+1}^\G[2]\{-2\}.$$
Now
\begin{eqnarray*}
P^\G(n) \otimes_{A_n^\G} (n)Q^\G = A_{n+1}^\G \otimes_{A_n^\G \otimes A_1^\G} (A_n^\G \otimes \k[\G] \otimes \k[\G]) \otimes_{A_n^\G \otimes A_1^\G} A_{n+1}^\G
\end{eqnarray*}
so we basically need a map $h: \k[\G] \otimes \k[\G] \rightarrow A_1^\G [2]\{-2\}$ of graded $(A_1^\G, A_1^\G)$-bimodules. Then we define $adj$ as the composition 
$$adj: A_{n+1}^\G \otimes_{A_n^\G \otimes A_1^\G} (A_n^\G \otimes \k[\G] \otimes \k[\G]) \otimes_{A_n^\G \otimes A_1^\G} A_{n+1}^\G \xrightarrow{h} A_{n+1}^\G \otimes_{A_n^\G \otimes A_1^\G} A_{n+1}^\G [2]\{-2\} \rightarrow A_{n+1}^\G [2]\{-2\}$$
where the second map is multiplication. 

To define $h$ we use the resolutions (\ref{eq:freeres}) and (\ref{eq:freeres2}) of $\k[\G]$. Tensoring the two resolutions we see that $h$ is defined by a map
$$(A_1^\G \otimes \wedge^2 V^\vee) \otimes A_1^\G \oplus (A_1^\G \otimes V^\vee) \otimes (V^\vee \otimes A_1^\G) \oplus A_1^\G \otimes (\wedge^2 V^\vee \otimes A_1^\G) \rightarrow A_1^\G \{-2\}.$$ 
We define the map from each summand as
\begin{enumerate}
\item $((f, \gamma) \otimes (w \wedge w')) \otimes (f', \gamma') \mapsto (f, \gamma) (f', \gamma') \omega(w \wedge w')$
\item $((f, \gamma) \otimes w) \otimes (w' \otimes (f', \gamma')) \mapsto (f, \gamma) (f', \gamma') \omega(w \wedge w')$
\item $(f, \gamma) \otimes ((w \wedge w') \otimes (f', \gamma')) \mapsto - (f, \gamma) (f', \gamma') \omega(w 
\wedge w')$
\end{enumerate}
where $\omega: \wedge^2 V^\vee \rightarrow \k$ is our fixed isomorphism. 

In order for this map to be well defined ({\it{i.e.}} a map of complexes) we need the composition 
\begin{equation*}
\xymatrix{
(A_1^\G \otimes \wedge^2 V^\vee) \otimes A_1^\G \oplus (A_1^\G \otimes V^\vee) \otimes (V^\vee \otimes A_1^\G) \oplus A_1^\G \otimes (\wedge^2 V^\vee \otimes A_1^\G) \ar[r] & A_1^\G \{-2\} \\
(A_1^\G \otimes \wedge^2 V^\vee) \otimes (V^\vee \otimes A_1^\G) \oplus (A_1^\G \otimes V^\vee) \otimes (\wedge^2 V^\vee \otimes A_1^\G) \ar[u] &  
}
\end{equation*}
to be zero. 

Let us check that the composition $(A_1^\G \otimes \wedge^2 V^\vee) \otimes (V^\vee \otimes A_1^\G) \rightarrow A_1^\G \{-2\}$ is zero. The first map is 
\begin{eqnarray*}
(f, \gamma) \otimes (w \wedge w') \otimes (w'' \otimes (f', \gamma')) 
&\mapsto& ((f, \gamma) \otimes (w \wedge w')) \otimes (f' w'', \gamma') + \\
& & ((f \gamma \cdot w, \gamma) \otimes w') \otimes (w'' \otimes (f', \gamma')) - \\
& & ((f \gamma \cdot w', \gamma) \otimes w) \otimes (w'' \otimes (f', \gamma')).
\end{eqnarray*}
Composing with the second map we get
$$(f (\gamma f') (\gamma w''), \gamma \gamma') \omega(w \wedge w') + (f (\gamma f') (\gamma w), \gamma \gamma') \omega(w' \wedge w'') - (f (\gamma f') (\gamma w'), \gamma \gamma') \omega(w \wedge w'').$$
Since the maps are linear in the $w$'s it suffices to check this is zero when $w = w'$, when $w = w''$ and when $w' = w''$. In all cases one of the three terms is immediately zero and the other two cancel out. 

Similarly, the other composition maps $((f,\gamma) \otimes w) \otimes ((w' \wedge w'') \otimes (f', \gamma'))$ to 
$$-(f(\gamma w)(\gamma f'), \gamma \gamma') \omega(w' \wedge w'') - (f (\gamma w'') (\gamma f'), \gamma \gamma') \omega(w \wedge w') + (f (\gamma w') (\gamma f'), \gamma \gamma') \omega(w \wedge w'')$$
which also equals zero. 

\subsubsection{Definition of $adj: \id \rightarrow QP [-1]\{1\}$}

We need a map of $(A_n^\G,A_n^\G)$-bimodules
$$A_n^\G \rightarrow (n)Q^\G \otimes_{A_{n+1}^\G} P^\G(n) [-2]\{2\}$$
which translates to a map 
\begin{equation}\label{eq:3}
A_n^\G [2]\{-2\} \rightarrow (A_n^\G \otimes \k[\G]) \otimes_{A_n^\G \otimes A_1^\G} A_{n+1}^\G \otimes_{A_1^\G \otimes A_n^\G} (\k[\G] \otimes A_n^\G) .
\end{equation}

We first replace each $\k[\G]$ with its resolution using (\ref{eq:freeres}) and (\ref{eq:freeres2}). Then to define the map in (\ref{eq:3}) we just need to define a map 
$$A_n^\G \{-2\} \rightarrow \bigoplus_{k+l=2} (A_n^\G \otimes (\wedge^k V^\vee \otimes A_1^\G)) \otimes_{A_n^\G \otimes A_1^\G} A_{n+1}^\G \otimes_{A_1^\G \otimes A_n^\G} ((A_1^\G \otimes \wedge^l V^\vee) \otimes A_n^\G)$$ 
of graded $(A_n^\G, A_n^\G)$-bimodules. We send 
\begin{eqnarray*}
1 &\mapsto& (1 \otimes 1) \otimes 1 \otimes ((1 \otimes w_1 \wedge w_2) \otimes 1) \\
& & - (1 \otimes (w_1 \otimes 1)) \otimes 1 \otimes ((1 \otimes w_2) \otimes 1) + (1 \otimes (w_2 \otimes 1)) \otimes 1 \otimes ((1 \otimes w_1) \otimes 1) \\
& & - (1 \otimes (w_1 \wedge w_2 \otimes 1) \otimes 1 \otimes (1 \otimes 1)
\end{eqnarray*}
where $w_1, w_2$ is our chosen basis of $V^\vee$. This uniquely determines the map. 

To see this is well defined ({\it{i.e.}} a map of complexes) we need to check that the composition 
\begin{equation*}
\xymatrix{
 & \bigoplus_{k+l=1} (A_n^\G \otimes (\wedge^k V^\vee \otimes A_1^\G)) \otimes_{A_n^\G \otimes A_1^\G} A_{n+1}^\G \otimes_{A_1^\G \otimes A_n^\G} ((A_1^\G \otimes \wedge^l V^\vee) \otimes A_n^\G) \\
A_n^\G \{-2\} \ar[r] & \bigoplus_{k+l=2} (A_n^\G \otimes (\wedge^k V^\vee \otimes A_1^\G)) \otimes_{A_n^\G \otimes A_1^\G} A_{n+1}^\G \otimes_{A_1^\G \otimes A_n^\G} ((A_1^\G \otimes \wedge^l V^\vee) \otimes A_n^\G) \ar[u] 
}
\end{equation*}
is zero. The check of this fact is a straight-forward exercise.

\subsubsection{Definition of $adj: id \rightarrow PQ [1]\{-1\}$}

We need a map of graded $(A_{n+1}^\G, A_{n+1}^\G)$-bimodules 
$A_{n+1}^\G \rightarrow P^\G(n) \otimes_{A_n^\G} (n)Q^\G$
or equivalently a map 
$$A_{n+1}^\G \rightarrow A_{n+1}^\G \otimes_{A_n^\G \otimes A_1^\G} (A_n^\G \otimes \k[\G] \otimes \k[\G]) \otimes_{A_n^\G \otimes A_1^\G} A_{n+1}^\G.$$
We send
\begin{equation}\label{eq:adj4}
1 \mapsto \sum_{i=0}^n \sum_{\gamma \in \G} s_i \dots s_n \otimes (1 \otimes \gamma \otimes \gamma^{-1}) \otimes s_n \dots s_i
\end{equation}
where $s_i = (i,i+1) \in S_{n+1}$. Here $s_i \dots s_n = 1$ if $i=0$ by convention.

To check this bimodule map is well defined we need to show that 
$$\sum_{i=0}^n \sum_{\gamma \in \G} a s_i \dots s_n \otimes (1 \otimes \gamma \otimes \gamma^{-1}) \otimes s_n \dots s_i = \sum_{i=0}^n \sum_{\gamma \in \G} s_i \dots s_n \otimes (1 \otimes \gamma \otimes \gamma^{-1}) \otimes s_n \dots s_i a$$
for any $a \in A_{n+1}^\G$. If $a \in (A_1^\G)^{\otimes n+1} \subset A_{n+1}^\G$ then 
\begin{eqnarray*}
\sum_{i=0}^n \sum_{\gamma \in \G} a s_i \dots s_n \otimes (1 \otimes \gamma \otimes \gamma^{-1}) \otimes s_n \dots s_i  
&=& \sum_{i=0}^n \sum_{\gamma \in \G} s_i \dots s_n b_i \otimes (1 \otimes \gamma \otimes \gamma^{-1}) \otimes s_n \dots s_i \\
&=& \sum_{i=0}^n \sum_{\gamma \in \G} s_i \dots s_n \otimes (1 \otimes \gamma \otimes \gamma^{-1}) \otimes b_i s_n \dots s_i \\
&=& \sum_{i=0}^n \sum_{\gamma \in \G} s_i \dots s_n \otimes (1 \otimes \gamma \otimes \gamma^{-1}) \otimes s_n \dots s_i a
\end{eqnarray*}
where $b_i = (s_n \dots s_i) \cdot a$. The second equality follows since $\sum_{\gamma \in \G} \gamma \otimes \gamma^{-1}$ lies in the centre of $\k[\G] \otimes_\k \k[\G]$. 

Since $A_{n+1}^\G$ is generated by $\k[S_{n+1}]$ as an $((A_1^\G)^{\otimes n+1}, (A_1^\G)^{\otimes n+1})$-bimodule it remains to show that 
$$\sum_{i=0}^n \sum_{\gamma \in \G} s_k s_i \dots s_n \otimes (1 \otimes \gamma \otimes \gamma^{-1}) \otimes s_n \dots s_i  = \sum_{i=0}^n \sum_{\gamma \in \G} s_i \dots s_n \otimes (1 \otimes \gamma \otimes \gamma^{-1}) \otimes s_n \dots s_i s_k$$
for $k=1, \dots, n$. The left hand side is the sum over $\gamma \in \G$ of 
\begin{eqnarray*}
& & \sum_{i=0}^{k-1} s_i \dots s_n s_{k-1} \otimes (1 \otimes \gamma \otimes \gamma^{-1}) \otimes s_n \dots s_i + s_ks_{k+1} \dots s_n \otimes (1 \otimes \gamma \otimes \gamma^{-1}) \otimes s_n \dots s_{k+1} \\
 &+& s_{k+1} \dots s_n \otimes (1 \otimes \gamma \otimes \gamma^{-1}) \otimes s_n \dots s_k + \sum_{i=k+2}^n s_i \dots s_n s_k \otimes (1 \otimes \gamma \otimes \gamma^{-1}) \otimes s_n \dots s_i
\end{eqnarray*}
because $s_k s_i \dots s_n = s_i \dots s_n s_{k-1}$ if $i \le k-1$ and $s_k s_i \dots s_n = s_i \dots s_n s_k$ if $i \ge k+2$. A similar calculation of the right side yields the same expression so $adj$ is well defined. 

\subsection{Theorem \#2} 

The second main theorem of this paper is the following: 

\begin{theorem}\label{thm:main2}
The natural transformations $X$, $T$ and $\adj$ satisfy the Heisenberg 2-relations and give a categorical Heisenberg action of $\H_\G$ on $\oplus_{n \ge 0} D(A_n^\G \dgmod)$. 
\end{theorem}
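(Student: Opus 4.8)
The plan is to prove Theorem~\ref{thm:main2} by verifying, one relation at a time, that the assignment $P\mapsto\fP$, $Q\mapsto\fQ$ on $1$-morphisms, together with the assignment of generating $2$-morphisms $X(v),X(\gamma),T,\adj$ described in Section~\ref{sec:action}, respects every defining relation of the category $\H'_\G$: the dot-collision rule $X_k(b)X_k(b')=X_k(bb')$ and the dot-slide rules (dots moving freely along strands, through crossings, and super-commuting across distinct strands), the biadjunction (zig--zag) identities making $\fP$ and $\fQ$ biadjoint up to grading shift, the bigon and braid relations~\eqref{eq:rel1}, the mixed up--down crossing relations~\eqref{eq:rel2}, and the closed-diagram relations~\eqref{eq:rel3}. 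Once these are checked we obtain a $2$-functor from $\H'_\G$ to the $2$-category whose $1$-morphisms are complexes of bimodules over the $A_n^\G$ and whose $2$-morphisms are maps in the derived category; since the Hom-categories of that target are idempotent complete, the $2$-functor extends uniquely along the Karoubi completion $\H'_\G\hookrightarrow\H_\G$, which is precisely a categorical Heisenberg action of $\H_\G$ on $\oplus_{n\ge 0}D(A_n^\G\dgmod)$; passing to Grothendieck groups and invoking Theorem~\ref{thm:main1} then yields the quantum Heisenberg algebra action on $K$-theory mentioned in the introduction. Throughout I would work with the bimodule kernels $P^\G(n),(n)Q^\G$, so that composition of functors becomes derived tensor product of bimodule complexes and each generating natural transformation becomes an explicit chain map, using the sign conventions for tensor products of complexes recalled before the definition of $X$.

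I would first dispatch the relations that already hold at the level of honest bimodules. The identities $X_k(b)X_k(b')=X_k(bb')$, $TX_k(b)=X_{k+1}(b)T$, super-commutation of dots on distinct strands, $T^2=\id$, and the braid relation $T_1T_2T_1=T_2T_1T_2$ reduce directly to the multiplication in $A_n^\G$ (in particular to the super-permutation action of $S_n$) and to the formula~\eqref{eq:Tdef} defining $T$ as right multiplication by a transposition. The relations governing $T\colon\fQ\fP\to\fP\fQ$ and $T\colon\fP\fQ\to\fQ\fP$, and the various cups and caps, are combinatorial statements about $\k[S_{n+1}]$ being free as a $\k[S_n]$-bimodule on $\{1,s_n\}$ together with the centrality of $\sum_{\gamma\in\G}\gamma\otimes\gamma^{-1}$ in $\k[\G]\otimes\k[\G]$ (already used when checking that $\adj\colon\id\to\fP\fQ$ is well defined). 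The first relation in~\eqref{eq:rel3} is immediate from the definition of $\adj\colon\fQ\fP\to\id$ through the normalized trace $\tr$ on $\k[\G]$, and the vanishing of the left curl in~\eqref{eq:rel3} follows because the corresponding composite factors through a map annihilating $\Sym^{>0}(V^\vee)$, so the Koszul resolution~\eqref{eq:freeres} forces it to be zero.

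The genuinely derived checks concern the degree-one dots $X(v)$, $v\in V$, and the mixed crossing relation~\eqref{eq:rel2}. For $X(v)$, the statements ``$X(v)$ slides along a strand, through a crossing, and past a cup or cap'' are homotopy-commutativity assertions for the chain maps defined by the diagrams~\eqref{eq:X1def} and~\eqref{eq:X2def}, and I would produce the explicit null-homotopies built from $\phi_v,\phi'_v,\psi_v,\psi'_v$. For~\eqref{eq:rel2} one must compute the derived composites $\fP\fQ$ and $\fQ\fP$ by tensoring the resolutions~\eqref{eq:freeres} and~\eqref{eq:freeres2}, identify the ``bubble'' correction term $\sum_{b\in\mathcal{B}}(\mathrm{cup})(b,b^\vee)(\mathrm{cap})$ with the bimodule map extracted from the Koszul differential $d$, and verify the two directions of~\eqref{eq:rel2} separately (one with, one without, the correction term, mirroring its two displayed equations). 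The hard part will be the sign bookkeeping: one must simultaneously track the super-grading signs in $B^\G$, the super-permutation signs from $S_n$, and the sign $(-1)^{ik}$ coming from the tensor-product-of-complexes convention, and reconcile these with the signs built into~\eqref{eq:X1def}--\eqref{eq:X2def} and into the diagram relations of Section~\ref{sec:catH}.

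To make this last step manageable I would organize the computation using the simplified graphical calculus of $\H^\G$ from Section~\ref{sec:graphical2}, where the $B^\G$-decorations are replaced by a smaller generating set, cutting down the number of sign-laden relations to verify; alternatively, one can avoid the derived subtleties entirely by first establishing the simpler non-derived action of $\H_\G$ from Section~\ref{sec:koszul} on module categories over the Koszul-dual algebras and then transporting it across the Koszul-duality equivalence, under which the degree-one dot relations become strictly commuting identities. (For $\G=\Z_2$ one runs the same argument with the modified definitions of Appendix~\ref{sec:z2}.) In either approach, once all the relations are verified, the extension to $\H_\G$ and the induced $K$-theoretic Heisenberg action are formal.
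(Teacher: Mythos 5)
Your main line of attack is exactly the paper's proof (Section \ref{sec:action2}): a relation-by-relation verification on the bimodule kernels $P^\G(n)$ and $(n)Q^\G$, with the relations that hold at the level of honest bimodules (dot collisions, the $S_n$-action, the crossings, the adjunction zig-zags) checked directly, and the genuinely derived checks --- the $X(v)$ identities and the mixed crossing relation (\ref{eq:rel2}) --- carried out on the Koszul resolutions (\ref{eq:freeres})--(\ref{eq:freeres2}), including an explicit chain homotopy (the paper's $h_v$ in (\ref{diagram1})) precisely where the dot-over-cap identity only holds up to homotopy, followed by the formal extension over the Karoubi envelope. In substance this is the same argument the paper gives.

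One caution about the two ``shortcuts'' in your last paragraph: neither would deliver the theorem as stated. Verifying the relations of the simplified calculus $\H^\G$ only yields an action of $\H^\G$, not of $\H_\G$: the comparison functor $\eta$ goes from $\H^\G$ to $\H_\G$, the 2-morphism algebras are only Morita equivalent (a claim the paper does not upgrade to an equivalence of 2-representation theories), and the defining relations of $\H'_\G$ involve arbitrary $B^\G$-labelled dots which must be checked as such; the paper uses $\H^\G$ for the Grothendieck-group computations, not for Theorem \ref{thm:main2}. Similarly, transporting the abelian action of Section \ref{sec:koszul} across Koszul duality does not ``avoid the derived subtleties'': one would first have to show that the duality equivalence matches the kernels and every generating 2-morphism on the two sides, which is comparable in difficulty to the direct verification and is nowhere carried out in the paper (Section \ref{sec:koszul} is only sketched and is logically independent of Theorem \ref{thm:main2}). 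Provided these remain optional conveniences and the direct check is actually performed as in your second and third paragraphs, your plan is sound.
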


We shall check all the Heisenberg relations required for the proof of Theorem \ref{thm:main2} in the next section. 

\begin{Remark}
In type $A$ (when $\G$ is a cyclic group) one can define everything $\k^\times$-equivariantly where the $\k^\times \subset SL(V)$ is the torus which commutes with $\G$. More precisely, the action of $\k^\times$ on $V$ induces an action on $\Sym^*(V^\vee)$ and instead of $A_n^\G$ one uses
$$\hat{A}_n^\G := [(\Sym^*(V^\vee) \rtimes (\G \times \k^\times)) \otimes (\Sym^*(V^\vee) \rtimes (\G \times \k^\times)) \otimes \dots \otimes (\Sym^*(V^\vee) \rtimes (\G \times \k^\times))] \rtimes S_n.$$
It is easy to check that all the $1$-morphisms and $2$-morphisms defined above are compatible with this extra $\k^\times$-structure. This means Theorem \ref{thm:main2} still holds so we get an action of $\H_\G$ on $\oplus_{n \ge 0} D(\hat{A}_n^\G \dgmod)$ (see also \cite{FJW2}).
\end{Remark}

\section{Proof of Theorem \ref{thm:main2}}\label{sec:action2}

\subsection{Composition of $X$'s}

We can define $X(b): \fP \rightarrow \fP [|b|]\{-|b|\}$ for any homogeneous $b \in B^\G$ by composing a sequence of $X(\gamma)$ and $X(v)$ where $\gamma \in \G$ and $v \in V$. In order for this to be well defined we need to check that 
\begin{enumerate}
\item $X(\gamma) X(\gamma') = X(\gamma \gamma')$ for any $\gamma, \gamma' \in \G$,
\item $X(\gamma) X(v) = X(\gamma \cdot v) X(\gamma)$ for any $\gamma \in \G$ and $v \in V$, and
\item $X(v') X(v) = - X(v) X(v')$.
\end{enumerate}

(1) The first assertion is clear since $\Gamma$ acts on $P^\G(n) = A_{n+1} \otimes_{A_n \otimes A_1} (A_n \otimes \k[\G])$ by right multiplication on $\k[\G]$. 

(2) To see the second assertion we replace $\k[\G]$ by its free resolution (\ref{eq:freeres}). In the free resolution $\G$ acts on the right only on the first factor of $A_1^\G \otimes \wedge^i V^\vee$. Writing out the composition we see that two pairs of compositions should agree:
\begin{itemize}
\item $A_1^\G \otimes V^\vee \xrightarrow{\gamma} A_1^\G \otimes V^\vee \xrightarrow{\phi_{v}} A_1^\G \{-1\}$ and \\
$A_1^\G \otimes V^\vee \xrightarrow{\phi_{\gamma v}} A_1^\G \{-1\} \xrightarrow{\gamma} A_1^\G \{-1\}$; 
\item $A_1^\G \otimes \wedge^2 V^\vee \xrightarrow{\gamma} A_1^\G \otimes \wedge^2 V^\vee \xrightarrow{\phi'_{v}} A_1^\G \otimes V^\vee \{-1\}$ and \\
$A_1^\G \otimes \wedge^2 V^\vee \xrightarrow{\phi'_{\gamma v}} A_1^\G \otimes V^\vee \{-1\} \xrightarrow{\gamma} A_1^\G \otimes V^\vee \{-1\}$. 
\end{itemize}
In the first pair the first composition is equal to 
$$(f, \gamma') \otimes w \mapsto (f, \gamma' \gamma) \otimes (\gamma^{-1} w) \mapsto \la \gamma^{-1} w, v \ra (f, \gamma' \gamma)$$
while the second composition is equal to 
$$(f, \gamma') \otimes w \mapsto \la w, \gamma v \ra (f, \gamma') \mapsto \la w, \gamma v \ra (f, \gamma' \gamma)$$
and these are the same since  $\la \cdot, \cdot \ra$ is invariant under the action of $\G$. 

Similarly, in the second pair the first composition is equal to
\begin{eqnarray*}
(f, \gamma') \otimes (w_1 \wedge w_2) 
&\mapsto& (f, \gamma' \gamma) \otimes (\gamma^{-1} w_1 \wedge \gamma^{-1} w_2) \\
&\mapsto& - \la \gamma^{-1} w_2, v \ra (f, \gamma' \gamma) \otimes (\gamma^{-1} w_1) + \la \gamma^{-1} w_1, v \ra (f, \gamma' \gamma) \otimes (\gamma^{-1} w_2)
\end{eqnarray*}
while the second composition is 
\begin{eqnarray*}
(f, \gamma') \otimes (w_1 \wedge w_2) 
&\mapsto& - \la w_2, \gamma v \ra (f, \gamma') \otimes w_1 + \la w_1, \gamma v \ra (f, \gamma') \otimes w_2 \\
&\mapsto& - \la w_2, \gamma v \ra (f, \gamma' \gamma) \otimes (\gamma^{-1} w_1) + \la w_1, \gamma v \ra (f, \gamma' \gamma) \otimes (\gamma^{-1} w_2) 
\end{eqnarray*}
which is clearly the same as the first composition. 

(3) To see the third assertion we again replace $\k[\G]$ by its free resolution (\ref{eq:freeres}) and then we need to show that the compositions
$$A_1^\G \otimes \wedge^2 V^\vee \xrightarrow{\phi'_{v'}} A_1^\G \otimes V^\vee \{-1\} \xrightarrow{\phi_v} A_1^\G \{-2\} \text{ and } A_1^\G \otimes \wedge^2 V^\vee \xrightarrow{\phi'_{v}} A_1^\G \otimes V^\vee \{-1\} \xrightarrow{\phi_{v'}} A_1^\G \{-2\}$$
differ precisely by a minus sign. The first composition is
\begin{eqnarray*}
(f, \gamma) \otimes (w_1 \wedge w_2) 
&\mapsto& - \la w_2, v' \ra (f, \gamma) \otimes w_1 + \la w_1, v' \ra (f, \gamma) \otimes w_2 \\
&\mapsto& - \la w_1, v \ra \la w_2, v' \ra (f, \gamma) + \la w_2, v \ra \la w_1, v' \ra (f, \gamma)
\end{eqnarray*}
and similarly the second composition is
$$(f, \gamma) \otimes (w_1 \wedge w_2) \mapsto - \la w_1, v' \ra \la w_2, v \ra (f, \gamma) + \la w_2, v' \ra \la w_1, v \ra (f, \gamma).$$
These two compositions clearly differ by multiplication by $-1$. 

This shows that $B^\G = \Lambda^*(V) \rtimes \G$ acts on $\fP$ and a similar proof shows that it also acts on $\fQ$. 

\subsubsection{$X's$ on different strands.}\label{subsec:diffstrand} 
We also need to check that elements of $B^\G$ on different strands supercommute, namely
$$(IX(b)) (X(b')I) = (-1)^{|b||b'|} (X(b')I) (IX(b)) \text{ for any homogeneous } b,b' \in B^{\G}.$$
This follows from the discussion at the beginning of this section, since $X(b)$ is a map of degree $|b|$ of complexes; whenever $f: A_{\bullet} \rightarrow A_{\bullet + |f|}$ and $g: A'_{\bullet} \rightarrow A'_{\bullet+|g|}$ are maps of complexes, it follows that $(f \otimes 1) (1 \otimes g) = (-1)^{|f||g|} (1 \otimes g) (f \otimes 1)$ as maps of complexes $A_{\bullet} \otimes B_{\bullet} \rightarrow A_{\bullet} \otimes B_{\bullet}[|f|+|g|]$. 

\subsection{Adjoint relations}\label{sec:adjointrels}

We now check that the following compositions involving adjunctions are all equal to the identity:
\begin{enumerate}
\item $\fP \xrightarrow{I \adj} \fP \fQ \fP [-1]\{1\} \xrightarrow{\adj I} \fP$ and $\fQ \xrightarrow{\adj I} \fQ \fP \fQ [-1]\{1\} \xrightarrow{I \adj} \fQ$
\item $\fP \xrightarrow{\adj I} \fP \fQ \fP [1]\{-1\} \xrightarrow{I \adj} \fP$ and $\fQ \xrightarrow{I \adj} \fQ \fP \fQ [1]\{-1\} \xrightarrow{\adj I} \fQ$.
\end{enumerate}
This will prove the graphical relations
$$
\begin{tikzpicture}
  \draw[very thick] (0,0) to (0,1) arc(180:0:.5) arc(180:360:.5)
  to (2,2);
  \draw (2.5,1) node {$=$};
  \draw [very thick] (3,0) -- (3,2);
  \draw (3.5,1) node {$=$};
   \draw[very thick] (4,2) to (4,1) arc(180:360:.5) arc(180:0:.5)
  to (6,0);
\end{tikzpicture}
$$
in both possible orientations.

\subsubsection{The composition $\fP \xrightarrow{I \adj} \fP \fQ \fP [-1]\{1\} \xrightarrow{\adj I} \fP$.} To show this is the identity it suffices to check that the composition $C_i \rightarrow D_i \{2\} \rightarrow C_i$ is the identity for $i=0,1,2$ where 
$$C_i := A_{n+1}^\G \otimes_{A_n^\G \otimes A_1^\G} (A_n^\G \otimes (A_1^\G \otimes \wedge^i V^\vee))$$
\begin{eqnarray*}
D_i &:=& A_{n+1}^\G \otimes_{A_n^\G \otimes A_1^\G} (A_n^\G \otimes (A_1^\G \otimes \wedge^i V^\vee)) \otimes_{A_n^\G} \\
& & \bigoplus_{k+l=2} (A_n^\G \otimes (\wedge^k V^\vee \otimes A_1^\G)) \otimes_{A_n^\G \otimes A_1^\G} A_{n+1}^\G \otimes_{A_1^\G \otimes A_n^\G} ((A_1^\G \otimes \wedge^l V^\vee) \otimes A_n^\G).
\end{eqnarray*}
We will prove the case $n=0$ which is the same as the general case but simplifies the notation. In this case the composition is
$$(A_1^\G \otimes \wedge^i V^\vee) \xrightarrow{f} (A_1^\G \otimes \wedge^i V^\vee) \otimes_{A_0^\G} \left( \bigoplus_{k+l=2} (\wedge^k V^\vee \otimes A_1^\G) \otimes_{A_1^\G} (A_1^\G \otimes \wedge^{l} V^\vee) \right) \{2\} \xrightarrow{g} (A_1^\G \otimes \wedge^i V^\vee).$$

The image of any element under $f$ consists of three terms. The only term that will be non-zero after applying $g$ will be the term 
$$ (A_1^\G \otimes \wedge^i V^\vee) \otimes_{A_0^\G} (\wedge^{2-i} V^\vee \otimes A_1^\G) \otimes_{A_1^\G} (A_1^\G \otimes \wedge^i V^\vee).$$

If $i=0$ and $a \in A_1^\G$ then 
$$a \mapsto - a \otimes ((w_1 \wedge w_2 \otimes 1) \otimes 1) \mapsto (a \otimes 1) \cdot \omega(w_1 \wedge w_2) = a$$
which proves that the composition is the identity when $i=0$. 

If $i=1$ then 
\begin{eqnarray*}
(f, \gamma) \otimes w 
&\mapsto& ((f, \gamma) \otimes w) \otimes \left( - (w_1 \otimes 1) \otimes (1 \otimes w_2) + (w_2 \otimes 1) \otimes (1 \otimes w_1) \right) \\
&\mapsto& - ((f, \gamma) \otimes w_2) \omega(w \wedge w_1) + ((f, \gamma) \otimes w_1) \omega(w \wedge w_2) 
\end{eqnarray*}
where $a = (f, \gamma)$. Since all maps are linear in the $w$'s it suffices to check the two cases $w = w_1$ and $w = w_2$. In the first case the first term is zero and we get
$$(f, \gamma) \otimes w_1 \mapsto ((f, \gamma) \otimes w_1) \omega(w_1 \wedge w_2) = (f, \gamma) \otimes w_1$$
while in the second case 
$$(f, \gamma) \otimes w_2 \mapsto -((f, \gamma) \otimes w_2) \otimes \omega(w_2 \wedge w_1) = (f,\gamma) \otimes w_2.$$

If $i=2$ and $a \in A_1^\G$ then 
$$a \otimes (w \wedge w') \mapsto (a \otimes (w \wedge w')) \otimes 1 \otimes (1 \otimes (w_1 \wedge w_2)) \mapsto a \otimes (w_1 \wedge w_2) \omega(w \wedge w')$$
and it is easy to check this equals $a \otimes (w \wedge w')$ so that the composition is the identity. 

This concludes the proof of the first relation in (1). The fact that the composition $\fQ \xrightarrow{\adj I} \fQ \fP \fQ [-1]\{1\} \xrightarrow{I \adj} \fQ$ is also the identity follows similarly. 

\subsubsection{The composition $\fP \xrightarrow{\adj I} \fP \fQ \fP [1]\{-1\} \xrightarrow{I \adj} \fP$.} This is the same as the composition 
\begin{equation*}
\xymatrix{
A_{n+1}^\G \otimes_{A_n^\G \otimes A_1^\G} (A_n^\G \otimes \k[\G]) \ar[r] & 
A_{n+1}^\G \otimes_{A_n^\G \otimes A_1^\G} (A_n^\G \otimes \k[\G] \otimes \k[\G]) \otimes_{A_n^\G \otimes A_1^\G} A_{n+1}^\G \otimes_{A_n^\G \otimes A_1^\G} (A_n^\G \otimes \k[\G]) \ar[d] \\
 & A_{n+1}^\G \otimes_{A_n^\G \otimes A_1^\G} (A_n^\G \otimes \k[\G]).
}
\end{equation*}
Using (\ref{eq:adj4}) the first map is given by
$$1 \otimes (1 \otimes 1) \mapsto \sum_{i=0}^n \sum_{\gamma \in \G} s_i \dots s_n \otimes (1 \otimes \gamma \otimes \gamma^{-1}) \otimes s_n \dots s_i \otimes (1 \otimes 1).$$
Composing with the second map all terms are zero unless $i=0$ (recall that $s_i \dots s_n = 1$ if $i=0$) in which case we get the composition 
\begin{eqnarray*}
1 \otimes (1 \otimes 1) 
&\mapsto& \sum_{\gamma \in \G} 1 \otimes (1 \otimes \gamma \otimes \gamma^{-1}) \otimes 1 \otimes (1 \otimes 1) \\
&\mapsto& \sum_{\gamma \in \G} 1 \otimes_{A_n^\G \otimes A_1^\G} (1 \otimes \gamma) \otimes_{A_n^\G} (\tr(\gamma^{-1})) \\
&=& 1 \otimes_{A_n^\G \otimes A_1^\G} (1 \otimes 1) \otimes_{A_n^\G} \tr(1) \\
&=& 1 \otimes (1 \otimes 1)
\end{eqnarray*}
where two get the last two equalities we use that $\tr(\gamma^{-1}) = 0$ unless $\gamma = 1$ in which case $\tr(1) = 1$. This proves that the composition $\fP \xrightarrow{\adj I} \fP \fQ \fP [1] \xrightarrow{I \adj} \fP$ is the identity. The fact that $\fQ \xrightarrow{I \adj} \fQ \fP \fQ [1] \xrightarrow{\adj I} \fQ$ is the identity follows similarly. 

\subsection{Dots and adjunctions} 

Next we study how natural transformations of $\fP$ and $\fQ$ interact with the adjunctions. We will show that the following pairs of maps are equal for any $b \in B^\G$ (for simplicity we omit shifts in this section)
\begin{enumerate}
\item $\fQ \fP \xrightarrow{I X(b)} \fQ \fP \xrightarrow{\adj} \id$ and $\fQ \fP \xrightarrow{X(b) I} \fQ \fP \xrightarrow{\adj} \id$
\item $\fP \fQ \xrightarrow{X(b) I} \fP \fQ \xrightarrow{\adj} \id$ and $\fP \fQ \xrightarrow{I X(b)} \fP \fQ \xrightarrow{\adj} \id$
\item $\id \xrightarrow{\adj} \fP \fQ \xrightarrow{X(b) I} \fP \fQ $ and $\id \xrightarrow{\adj} \fP \fQ \xrightarrow{I X(b)} \fP \fQ $
\item $\id \xrightarrow{\adj} \fQ \fP \xrightarrow{I X(b)} \fQ \fP $ and $\id \xrightarrow{\adj} \fQ \fP \xrightarrow{X(b) I} \fQ \fP $.
\end{enumerate}
Equalities (1) and (2) are equivalent to the following defining relations for 2-morphisms in $\H'_\G$:
$$
\begin{tikzpicture}
\draw (0,0) -- (0,-.5)[->] [very thick];
\filldraw [blue] (0,-.25) circle (2pt);
\draw (0,-.25) node [anchor=east] [black] {$b$};
\draw (1,0) -- (1,-.5) [very thick];
\draw (0,0) arc (180:0:.5) [very thick];
\draw (1.5,0) node {=};
\draw (2,0) -- (2,-.5)[->] [very thick];
\filldraw [blue] (3,-.25) circle (2pt);
\draw (3,-.25) node [anchor=west] [black] {$b$};
\draw (3,0) -- (3,-.5) [very thick];
\draw (2,0) arc (180:0:.5) [very thick];

\draw (5,0) -- (5,-.5) [very thick];
\filldraw [blue] (5,-.25) circle (2pt);
\draw (5,-.25) node [anchor=east] [black] {$b$};
\draw (6,0) -- (6,-.5)[->] [very thick];
\draw (5,0) arc (180:0:.5) [very thick];
\draw (6.5,0) node {=};
\draw (7,0) -- (7,-.5) [very thick];
\filldraw [blue] (8,-.25) circle (2pt);
\draw (8,-.25) node [anchor=west] [black] {$b$};
\draw (8,0) -- (8,-.5) [->][very thick];
\draw (7,0) arc (180:0:.5) [very thick];
\end{tikzpicture}
$$
Notice that relations (3) and (4) above follow formally from (1) and (2) via the adjointness properties. Subsequently we only check (1) and (2). Also, since any $b$ is the composition of $v$'s and $\gamma$'s it suffices to consider the cases when $b = \gamma$ and $b = v$. 

\subsubsection{Relation (1) when $b = \gamma$.} 
Recall that
\begin{equation}\label{eq:local1}
(n)Q^\G \otimes_{A_{n+1}^\G} P^\G(n) = (A_n^\G \otimes \k[\G]) \otimes_{A_n^\G \otimes A_1^\G} A_{n+1}^\G \otimes_{A_n^\G \otimes A_1^\G} (A_n^\G \otimes \k[\G]).
\end{equation}
Now the map $\fQ \fP \xrightarrow{I X(\gamma)} \fQ \fP$ in (1) is 
$$(1 \otimes \gamma_1) \otimes a \otimes (1 \otimes \gamma_2) 
\mapsto (1 \otimes \gamma_1) \otimes a \otimes (1 \otimes \gamma_2 \gamma)$$
where we can assume $a = 1$ or $a = (n, n+1)$. If $a = 1$ then $\fQ \fP \xrightarrow{\adj} \id $ takes this to $\tr(\gamma_1 \gamma_2 \gamma) 1 \in A_n^\G$. If $a = (n, n+1)$ then this is mapped to zero. On the other hand, $\fQ \fP \xrightarrow{X(\gamma) I} \fQ \fP$ in (1) is 
$$(1 \otimes \gamma_1) \otimes a \otimes (1 \otimes \gamma_2) \mapsto (1 \otimes \gamma \gamma_1) \otimes a \otimes (1 \otimes \gamma_2)$$
which is then mapped to $\tr(\gamma \gamma_1 \gamma_2) 1 \in A_n^\G$ if $a = 1$ and to zero if $a = (n, n+1)$. So the two compositions agree. One can prove (2) when $b=\gamma$ similarly. 

\subsubsection{Relation (1) when $b=v$.}
To write down the composition $\fQ \fP \xrightarrow{I X(v)} \fQ \fP \xrightarrow{\adj} \id$ we resolve both copies of $\k[\G]$ in (\ref{eq:local1}). We get a map of complexes which is determined by the composition 
$$C_1 \xrightarrow{I \phi_v \oplus 0} C_2 \rightarrow C_3 \xrightarrow{\adj} A_n^\G$$
where 
$$C_1 := \oplus_{k+l=1} ((A_n^\G \otimes (\wedge^k V^\vee \otimes A_1^\G)) \otimes_{A_n^\G \otimes A_1^\G} A_{n+1}^\G \otimes_{A_n^\G \otimes A_1^\G} ((A_n^\G \otimes (A_1^\G \otimes \wedge^l V^\vee))$$
$$C_2 := (A_n^\G \otimes A_1^\G) \otimes_{A_n^\G \otimes A_1^\G} A_{n+1}^\G \otimes_{A_n^\G \otimes A_1^\G} (A_n^\G \otimes A_1^\G)$$
$$C_3 := (A_n^\G \otimes \k[\G]) \otimes_{A_n^\G \otimes A_1^\G} A_{n+1}^\G \otimes_{A_n^\G \otimes A_1^\G} (A_n^\G \otimes \k[\G]).$$
The second map is induced by the natural projection $A_1^\G \rightarrow \k[\G]$. 

If $k=1$ the first map is zero and if $k=0$ it is given by
$$(1 \otimes a) \otimes a' \otimes (1 \otimes (a'' \otimes w)) \mapsto (1 \otimes a) \otimes a' \otimes (1 \otimes a'' \la w, v \ra) \in C_2$$
where $a' \in A_{n+1}^\G$ is either $1$ or the transposition $(n,n+1)$. The only case when this term is not mapped to zero is when $a'=1$ and both $a, a''$ lie in degree zero. So let $a = \gamma$ and $a'' = \gamma''$, in which case
\begin{equation}\label{eq:a}
\adj \circ (I \phi_v) ((1 \otimes \gamma) \otimes 1 \otimes (1 \otimes (\gamma'' \otimes w)) = 1 \la w, v \ra \tr(\gamma \gamma'') \in A_n^\G.
\end{equation}

Similarly, $\fQ \fP \xrightarrow{X(v) I} \fQ \fP \xrightarrow{\adj} \id$ is determined by the composition $C_1 \xrightarrow{0 \oplus \psi_v I} C_2 \rightarrow C_3 \xrightarrow{\adj} A_n^\G$. This is zero if $k=0$.  If $k=1$, the composition is also zero except on terms of the form 
\begin{equation}\label{eq:b}
(1 \otimes (w \otimes \gamma)) \otimes 1 \otimes (1 \otimes \gamma'') \mapsto - (1 \otimes \la v, w \ra \gamma ) \otimes 1 \otimes (1 \otimes \gamma'') \mapsto - 1 \la v, w \ra \tr(\gamma \gamma'') \in A_n^\G.
\end{equation}

Although these two compositions are not equal as maps of complexes, they are homotopic to each other. To see this we need to define a map $h_v$ of $(A_n^\G, A_n^\G)$ bimodules such that the following diagram commutes
\begin{equation}\label{diagram1}
\xymatrix{
(A_n^\G \otimes A_1^\G) \otimes_{A_n^\G \otimes A_1^\G} A_{n+1}^\G \otimes_{A_n^\G \otimes A_1^\G} (A_n^\G \otimes A_1^\G) \ar@{-->}[rd]^{h_v} & \\
C_1 \ar[r]_{\adj \circ (I \phi_v \oplus 0) - \adj \circ (0 \oplus \psi_v I)} \ar[u] & A_n^\G.
}
\end{equation}
The map $h_v$ is determined by where it takes elements of the form 
$$(1 \otimes a) \otimes a' \otimes (1 \otimes a'')$$
where $a,a'' \in A_1^\G$ and either $a' = 1$ or $a' = (n, n+1)$. If $a' = (n,n+1)$ then $h_v$ sends $(1 \otimes a) \otimes a' \otimes (1 \otimes a'')$ to zero. If $a'=1$ then $h_v$ maps it to $\la aa'', v \ra 1$, where $\la aa'', v \ra$ is zero unless $aa''$ is of degree one, in which case $\la aa'', v \ra = \la w, v \ra \tr(\gamma)$ for $aa'' = (w, \gamma) \in A_1^\G$.

We now show that the diagram (\ref{diagram1}) communtes.  Consider first the term where $k=0$ in $C_1$. We see that
$$C_1 \ni (1 \otimes \gamma) \otimes 1 \otimes (1 \otimes (\gamma'' \otimes w)) \mapsto (1 \otimes \gamma) \otimes 1 \otimes (1 \otimes (\gamma'' w, \gamma'')) \xrightarrow{h_v} 1 \la \gamma \gamma'' w, v \ra \tr(\gamma \gamma'').$$ 
This composition is zero if $\gamma \gamma'' \ne 1$ and equal to $1 \la w, v \ra \tr(1)$ otherwise. 

Now $\adj \circ (0 \oplus \psi_v I)$ kills this term so, using (\ref{eq:a}), we see this is the same as 
$$(\adj \circ (I \phi_v \oplus 0) - \adj \circ (0 \oplus \psi_v I)) \left( (1 \otimes \gamma) \otimes 1 \otimes (1 \otimes (\gamma'' \otimes w)) \right).$$
So diagram (\ref{diagram1}) commutes when $k=0$. 

For the $k=1$ term in $C_1$ we have 
$$C_1 \ni (1 \otimes (w \otimes \gamma)) \otimes 1 \otimes (1 \otimes \gamma'') \mapsto (1 \otimes (w, \gamma)) \otimes 1 \otimes (1 \otimes \gamma'') \xrightarrow{h_v} 1 \la w, v \ra \tr(\gamma \gamma'').$$
Also, $\adj \circ (I \phi_v \oplus 0)$ acts by zero so using (\ref{eq:b}) we see that
$$(\adj \circ (I \phi_v \oplus 0) - \adj \circ (0 \oplus \psi_v I)) \left(1 \otimes (w \otimes \gamma)) \otimes 1 \otimes (1 \otimes \gamma'') \right) = 1 \la v, w \ra \tr(\gamma \gamma'').$$
So diagram (\ref{diagram1}) also commutes when $k=1$.  

\subsubsection{Relation (2) when $b=v$.} 
After resolving both copies of $\k[\G]$ in 
$$P^\G(n) \otimes_{A_n^\G} (n)Q^\G = A_{n+1}^\G \otimes_{A_n^\G \otimes A_1^\G} (A_n^\G \otimes \k[\G] \otimes \k[\G]) \otimes_{A_n^\G \otimes A_1^\G} A_{n+1}^\G$$
the composition $\fP \fQ \xrightarrow{X(v) I} \fP \fQ \xrightarrow{\adj} \id$ is induced by the map 
$$\oplus_{k+l=3} (A_1^\G \otimes \wedge^k V^\vee) \otimes (\wedge^l V^\vee \otimes A_1^\G) \xrightarrow{\phi_v I \oplus \phi'_v I} \oplus_{k+l=2} (A_1^\G \otimes \wedge^k V^\vee) \otimes (\wedge^l V^\vee \otimes A_1^\G) \rightarrow A_1^\G$$
given by 
\begin{eqnarray*}
(1 \otimes (w \wedge w')) \otimes (w'' \otimes 1) 
&\mapsto& \left( - \la w', v \ra 1 \otimes w) + \la w, v \ra 1 \otimes w' \right) \otimes (w'' \otimes 1) \\
&\mapsto& 1 \left( - \la w', v \ra \omega(w \wedge w'') + \la w, v \ra \omega(w' \wedge w'') \right)
\end{eqnarray*}
on one summand and similarly
\begin{eqnarray*}
(1 \otimes w) \otimes ((w' \wedge w'') \otimes 1) 
&\mapsto& 1 \la w, v \ra \otimes ((w' \wedge w'') \otimes 1) \\
&\mapsto& - 1 \la w, v \ra \omega(w' \wedge w'')
\end{eqnarray*}
on the second summand. 

On the other hand, the composition $\fP \fQ \xrightarrow{I X(v)} \fP \fQ \xrightarrow{\adj} \id$ is given by 
\begin{eqnarray*}
(1 \otimes (w \wedge w')) \otimes (w'' \otimes 1) 
&\mapsto& (1 \otimes (w \wedge w')) \otimes - \la v, w'' \ra 1 \\
&\mapsto& - 1 \la v, w'' \ra \omega(w \wedge w')
\end{eqnarray*}
on one summand and similarly
\begin{eqnarray*}
(1 \otimes w) \otimes ((w' \wedge w'') \otimes 1) 
&\mapsto& (1 \otimes w) \otimes \left( - \la v, w' \ra w'' \otimes 1 + \la v, w'' \ra w' \otimes 1 \right) \\
&\mapsto& 1 \left( - \la v, w' \ra \omega(w \wedge w'') + \la v, w'' \ra \omega(w \wedge w') \right)
\end{eqnarray*}
on the other summand. It is now easy to check that these maps are equal (by the linearity in $w$ it suffices to check the three cases $w=w'$, $w=w''$ and $w'=w''$). 

\subsection{Pitchfork relations}

Next we check that the following compositions are equal:
\begin{enumerate}
\item $\fP \xrightarrow{I \adj} \fP \fP \fQ [1]\{-1\} \xrightarrow{TI} \fP \fP \fQ [1]\{-1\}$ and $\fP \xrightarrow{\adj I} \fP \fQ \fP [1]\{-1\} \xrightarrow{IT} \fP \fP \fQ [1]\{-1\}$
\item $\fQ \xrightarrow{I \adj} \fQ \fP \fQ [1]\{-1\} \xrightarrow{TI} \fP \fQ \fQ [1]\{-1\}$ and $\fQ \xrightarrow{\adj I} \fP \fQ \fQ [1]\{-1\} \xrightarrow{IT} \fP \fQ \fQ [1]\{-1\}$
\item $\fP \fP \fQ \xrightarrow{IT} \fP \fQ \fP \xrightarrow{\adj I} \fP [1]\{-1\}$ and $\fP \fP \fQ \xrightarrow{TI} \fP \fP \fQ \xrightarrow{I \adj} \fP [1]\{-1\}$
\item $\fP \fQ \fQ \xrightarrow{IT} \fP \fQ \fQ \xrightarrow{\adj I} \fQ [1]\{-1\}$ and $\fP \fQ \fQ \xrightarrow{TI} \fQ \fP \fQ \xrightarrow{I \adj} \fQ [1]\{-1\}$
\end{enumerate}
This will check the following defining isotopy relations for 2-morhpisms in $\H'_\G$:
\begin{align*}
\begin{tikzpicture}
   \draw[<-,very thick] (0.5,1) .. controls (0.5,0) and (1.5,0) .. (1.5,1);
  \draw[<-,very thick] (1,1) to (0.5,0);
  \draw (2.25,0.5) node {=};
  \draw[<-,very thick] (3,1) .. controls (3,0) and (4,0) .. (4,1);
  \draw[<-,very thick] (3.5,1) to (4,0);
   \draw[<-,very thick] (6.5,1) .. controls (6.5,0) and (7.5,0) .. (7.5,1);
  \draw[->,very thick] (7,1) to (6.5,0);
  \draw (8.25,0.5) node {=};
  \draw[<-,very thick] (9,1) .. controls (9,0) and (10,0) .. (10,1);
  \draw[->,very thick] (9.5,1) to (10,0);
\end{tikzpicture}
 \end{align*}

\begin{align*}
\begin{tikzpicture}
  \draw[->,very thick] (0.5,0) .. controls (0.5,1) and (1.5,1) .. (1.5,0);
  \draw[<-,very thick] (1.5,1) to (1,0);
  \draw (2.25,0.5) node {=};
  \draw[->,very thick] (3,0) .. controls (3,1) and (4,1) .. (4,0);
  \draw[<-,very thick] (3,1) to (3.5,0);
  \draw[->,very thick] (6.5,0) .. controls (6.5,1) and (7.5,1) .. (7.5,0);
  \draw[->,very thick] (7.5,1) to (7,0);
  \draw (8.25,0.5) node {=};
  \draw[->,very thick] (9,0) .. controls (9,1) and (10,1) .. (10,0);
  \draw[->,very thick] (9,1) to (9.5,0);
\end{tikzpicture}
 \end{align*}

Notice that one also has four more relations where one reverses the orientation of the cups or cups, but these all follow formally from the relations above and the adjointness relations from Section \ref{sec:adjointrels}.  These pitchfork relations, together with the adjunction relations which allow the straightening of an S-shape, imply that any two diagrams without dots differing by a rel boundary planar isotopy define the same 2-morphism. 

\subsubsection{Pitchfork relations (1).} 
The first composition is a map 
$$A_{n+1}^\G \otimes_{A_n^\G \otimes A_1^\G} (A_n^\G \otimes \k[\G]) \xrightarrow{I \adj} C \xrightarrow{TI} C$$ 
where 
$$C := A_{n+1}^\G \otimes_{A_n^\G \otimes A_1^\G} (A_n^\G \otimes \k[\G]) \otimes_{A_n^\G} A_n^\G \otimes_{A_{n-1}^\G \otimes A_1^\G} (A_{n-1}^\G \otimes \k[\G] \otimes \k[\G]) \otimes_{A_{n-1}^\G \otimes A_1^\G} A_n^\G.$$
This composition is determined by 
\begin{eqnarray*}
1 \otimes (1 \otimes 1) 
&\mapsto& \sum_{i=0}^{n-1} \sum_{\gamma \in \G} 1 \otimes (1 \otimes 1) \otimes (s_i \dots s_{n-1}) \otimes (1 \otimes \gamma \otimes \gamma^{-1}) \otimes (s_{n-1} \dots s_i) \\
&=& \sum_{i=0}^{n-1} \sum_{\gamma \in \G} (s_i \dots s_{n-1}) \otimes (1 \otimes 1) \otimes 1 \otimes (1 \otimes \gamma \otimes \gamma^{-1}) \otimes (s_{n-1} \dots s_i) \\
&\mapsto& \sum_{i=0}^{n-1} \sum_{\gamma \in \G} (s_i \dots s_{n-1} s_n) \otimes (1 \otimes \gamma) \otimes 1 \otimes (1 \otimes 1 \otimes \gamma^{-1}) \otimes (s_{n-1} \dots s_i).
\end{eqnarray*}
On the other hand, the second composition in (1) is a map 
$$A_{n+1}^\G \otimes_{A_n^\G \otimes A_1^\G} (A_n^\G \otimes \k[\G]) \xrightarrow{\adj I} C_1 \xrightarrow{IT} C_2$$
where 
$$C_1 := A_{n+1}^\G \otimes_{A_n^\G \otimes A_1^\G} (A_n^\G \otimes \k[\G] \otimes \k[\G]) \otimes_{A_n^\G \otimes A_1^\G} A_{n+1}^\G \otimes_{A_n^\G \otimes A_1^\G} (A_n^\G \otimes \k[\G])$$
$$C_2 := A_{n+1}^\G \otimes_{A_n^\G \otimes A_1^\G} (A_n^\G \otimes \k[\G]) \otimes_{A_n^\G} A_n^\G \otimes_{A_{n-1}^\G \otimes A_1^\G} (A_{n-1}^\G \otimes \k[\G] \otimes \k[\G]) \otimes_{A_{n-1}^\G \otimes A_1^\G} A_n^\G.$$
It is determined by
\begin{eqnarray*}
1 \otimes (1 \otimes 1)
&\mapsto& \sum_{i=0}^{n} \sum_{\gamma \in \G} (s_i \dots s_n) \otimes (1 \otimes \gamma \otimes \gamma^{-1}) \otimes (s_n \dots s_i) \otimes (1 \otimes 1) \\
&=& \sum_{i=1}^{n} \sum_{\gamma \in \G} (s_i \dots s_n) \otimes (1 \otimes \gamma \otimes \gamma^{-1}) \otimes s_n \otimes (s_{n-1} \dots s_i \otimes 1)  \\
& & + \sum_{\gamma \in \G} 1 \otimes (1 \otimes \gamma \otimes \gamma^{-1}) \otimes 1 \otimes (1 \otimes 1) \\
&\mapsto& \sum_{i=1}^{n} \sum_{\gamma \in \G} (s_i \dots s_n) \otimes (1 \otimes \gamma) \otimes 1 \otimes (1 \otimes 1 \otimes \gamma^{-1}) \otimes (s_{n-1} \dots s_i).
\end{eqnarray*}
The two compositions are equal, thus we see that both sides of the first pitchfork relation are equal.

The proof of relation (2) is very similar to the proof of relation (1), and we omit the details.

\subsubsection{Pitchfork relation (3).} 
Resolving every copy of $\k[\G]$ the first composition in (3) is 
$$C_1 [-1]\{1\} \xrightarrow{IT} C_2 [-1]\{1\} \xrightarrow{\adj I} \bigoplus_l A_{n+1}^\G \otimes_{A_n^\G \otimes A_1^\G} (A_n^\G \otimes (A_1^\G \otimes \wedge^l V^\vee)) [1]\{-1\}$$
where 
\begin{eqnarray*}
C_1 &:=& \bigoplus_{k,l,m} A_{n+1}^\G \otimes_{A_n^\G \otimes A_1^\G} (A_n^\G \otimes (A_1^\G \otimes \wedge^k V^\vee)) \otimes_{A_n^\G} A_n^\G \otimes_{A_{n-1}^\G \otimes A_1^\G} (A_{n-1}^\G \otimes (A_1^\G \otimes \wedge^l V^\vee)) \\
& & \otimes_{A_{n-1}^\G} (A_{n-1}^\G \otimes (A_1^\G \otimes \wedge^m V^\vee)) \otimes_{A_{n-1}^\G \otimes A_1^\G} A_n^\G \\
C_2 &:=& \bigoplus_{k,l,m} A_{n+1}^\G \otimes_{A_n^\G \otimes A_1^\G} (A_n^\G \otimes (A_1^\G \otimes \wedge^k V^\vee)) \\
& & \otimes_{A_n^\G} (A_n^\G \otimes (\wedge^m V^\vee \otimes A_1^\G)) \otimes_{A_n^\G \otimes A_1^\G} A_{n+1}^\G \otimes_{A_n^\G \otimes A_1^\G} (A_n^\G \otimes (A_1^\G \otimes \wedge^l V^\vee)).
\end{eqnarray*}
The maps are given by 
\begin{eqnarray*}
& & 1 \otimes (1 \otimes (1 \otimes w)) \otimes 1 \otimes (1 \otimes (1 \otimes w')) \otimes (1 \otimes (1 \otimes w'')) \otimes 1 \\
&\mapsto& (-1)^{lm} 1 \otimes (1 \otimes (1 \otimes w)) \otimes (1 \otimes (1 \otimes w'')) \otimes s_n \otimes (1 \otimes (1 \otimes w')) \\
&\mapsto& (-1)^{lm} s_n \otimes (1 \otimes (1 \otimes w')) \omega(w \wedge w'')
\end{eqnarray*}
where $w \in \wedge^k V^\vee, w' \in \wedge^l V^\vee, w'' \in \wedge^m V^\vee$. Here we use the fact that the right crossing $T: \fP \fQ \rightarrow \fQ \fP$ acts on the resolutions of $\fP$ and $\fQ$ just as before, that is, by switching factors and adding a copy of $s_n$ in the middle.

On the other hand, the second composition in (3) is 
$$C_1 [-1]\{1\} \xrightarrow{TI} C_1 [-1]\{1\} \xrightarrow{I \adj} \bigoplus_l A_{n+1}^\G \otimes_{A_n^\G \otimes A_1^\G} (A_n^\G \otimes (A_1^\G \otimes \wedge^l V^\vee)) [1]\{-1\}.$$
The maps are given by 
\begin{eqnarray*}
& & 1 \otimes (1 \otimes (1 \otimes w)) \otimes 1 \otimes (1 \otimes (1 \otimes w')) \otimes (1 \otimes (1 \otimes w'')) \otimes 1 \\ 
&\mapsto& (-1)^{kl} s_n \otimes (1 \otimes (1 \otimes w')) \otimes 1 \otimes (1 \otimes (1 \otimes w)) \otimes (1 \otimes (1 \otimes w'')) \otimes 1 \\
&\mapsto& (-1)^{kl} s_n \otimes (1 \otimes (1 \otimes w')) \omega(w \wedge w'').
\end{eqnarray*}
As long as $kl \equiv lm \bmod 2$, this is the same composition as before. The only other possibility is that $k+m \equiv 1 \bmod 2$, in which case $\omega(w \wedge w'') = 0$ (and thus both sides of (3) vanish).  Thus relation (3) holds. Relation (4) is proved similarly. 

\subsection{Dots and crossings}

Next we check that dots move freely through crossings, namely that the following compositions are equal (for simplicity we omit shifts in this section):
\begin{enumerate}
\item $\fP \fP \xrightarrow{X(b) I} \fP \fP \xrightarrow{T} \fP \fP$ and $\fP \fP \xrightarrow{T} \fP \fP \xrightarrow{I X(b)} \fP \fP$ where $b \in B^\G$
\item $\fP \fP \xrightarrow{I X(b)} \fP \fP \xrightarrow{T} \fP \fP$ and $\fP \fP \xrightarrow{T} \fP \fP \xrightarrow{X(b) I} \fP \fP$ where $b \in B^\G$.
\end{enumerate}
$$
\begin{tikzpicture}[>=stealth]
\draw [->](0,0) -- (1,1) [very thick];
\draw [->](1,0) -- (0,1) [very thick];
\filldraw [blue](.25,.25) circle (2pt);
\draw (.25,.25) node [anchor=west] [black] {$b$};
\draw (1.5,.5) node{$=$};
\draw [->](2,0) -- (3,1) [very thick];
\draw [->](3,0) -- (2,1) [very thick];
\filldraw [blue](2.75,0.75) circle (2pt);
\draw (2.75,.75) node [anchor=east] [black] {$b$};
\draw  [shift={+(5,0)}][->](0,0) -- (1,1) [very thick];
\draw  [shift={+(5,0)}][->](1,0) -- (0,1) [very thick];
\filldraw  [shift={+(5,0)}][blue](.75,.25) circle (2pt);
\draw [shift={+(5,0)}] (.75,.25) node [anchor=east] [black] {$b$};
\draw  [shift={+(5,0)}](1.5,.5) node{$=$};
\draw  [shift={+(5,0)}][->](2,0) -- (3,1) [very thick];
\draw  [shift={+(5,0)}][->](3,0) -- (2,1) [very thick];
\filldraw  [shift={+(5,0)}][blue](2.25,0.75) circle (2pt);
\draw [shift={+(5,0)}] (2.25,.75) node [anchor=east] [black] {$b$};
\end{tikzpicture}
$$
Notice that the analogous relations involving the other three oriented crossings follow formally from the relations above and the adjunctions from the previous sections. 

\subsubsection{Relation (1) when $b = \gamma$.}
The first composition is $C \xrightarrow{X(\gamma) I} C \xrightarrow{T} C$ where 
$$C := A_{n+2}^\G \otimes_{A_n^\G \otimes A_1^\G \otimes A_1^\G} (A_n^\G \otimes \k[\G] \otimes \k[\G]).$$ 
The map is given by 
$$1 \otimes (1 \otimes 1 \otimes 1) \mapsto 1 \otimes (1 \otimes \gamma \otimes 1) \mapsto s_{n+1} \otimes (1 \otimes 1 \otimes \gamma).$$
Similarly, the second composition $C \xrightarrow{T} C \xrightarrow{I X(\gamma)} C$ is given by
$$1 \otimes (1 \otimes 1 \otimes 1) \mapsto s_{n+1} \otimes (1 \otimes 1 \otimes 1) \mapsto s_{n+1} \otimes (1 \otimes 1 \otimes \gamma).$$
Clearly these two compositions are the same. Relation (2) is proved similarly. 

\subsubsection{Relation (2) when $b = v$.}
We first replace each $\k[\G]$ by its resolution. Then $T$ lifts to a map of complexes
$$A_n^\G \otimes (A_1^\G \otimes \wedge^k V^\vee) \otimes (A_1^\G \otimes \wedge^l V^\vee) \rightarrow A_n^\G \otimes (A_1^\G \otimes \wedge^l V^\vee) \otimes (A_1^\G \otimes \wedge^k V^\vee)$$
given by 
$$a \otimes (a' \otimes w') \otimes (a'' \otimes w'') \mapsto (-1)^{kl} a \otimes (a'' \otimes w'') \otimes (a' \otimes w')$$
where $w' \in \wedge^k V^\vee$ and $w'' \in \wedge^l V^\vee$.

Now the composition $T \circ X(v)I$ is induced by 
$$a \otimes (a' \otimes w') \otimes (a'' \otimes w'') \mapsto a \otimes \phi_v(a' \otimes w') \otimes (a'' \otimes w'') \mapsto (-1)^{(k-1)l} a \otimes (a'' \otimes w'') \otimes \phi_v(a' \otimes w')$$
while $IX(v) \circ T$ is induced by 
$$a \otimes (a' \otimes w') \otimes (a'' \otimes w'') \mapsto (-1)^{kl} a \otimes (a'' \otimes w'') \otimes (a' \otimes w') \mapsto (-1)^{kl} a \otimes (a'' \otimes w'') \otimes (-1)^l \phi_v(a' \otimes w').$$
These two compositions are clearly equal which proves relation (1). Relation (2) is proved similarly.

\subsection{Composition of crossings relations}

First we have the symmetric group relations, namely that the following compositions are equal:
\begin{enumerate}
\item $\fP \fP \xrightarrow{T} \fP \fP \xrightarrow{T} \fP \fP$ and $\fP \fP \xrightarrow{II} \fP \fP$
\item $\fP \fP \fP \xrightarrow{IT} \fP \fP \fP \xrightarrow{TI} \fP \fP \fP \xrightarrow{IT} \fP \fP \fP$ and $\fP \fP \fP \xrightarrow{TI} \fP \fP \fP \xrightarrow{IT} \fP \fP \fP \xrightarrow{TI} \fP \fP \fP$.
\end{enumerate}
$$
\begin{tikzpicture}
\draw (0,0) .. controls (1,1) .. (0,2)[->][very thick] ;
\draw (1,0) .. controls (0,1) .. (1,2)[->] [very thick];
\draw (1.5,1) node {=};
\draw (2,0) --(2,2)[->][very thick] ;
\draw (3,0) -- (3,2)[->][very thick] ;

\draw  [shift={+(7,0)}](0,0) -- (2,2)[->][very thick];
\draw  [shift={+(7,0)}](2,0) -- (0,2)[->][very thick];
\draw  [shift={+(7,0)}](1,0) .. controls (0,1) .. (1,2)[->][very thick];
\draw  [shift={+(7,0)}](2.5,1) node {=};
\draw  [shift={+(7,0)}](3,0) -- (5,2)[->][very thick];
\draw  [shift={+(7,0)}](5,0) -- (3,2)[->][very thick];
\draw  [shift={+(7,0)}](4,0) .. controls (5,1) .. (4,2)[->][very thick];
\end{tikzpicture}
$$

These follow immediately from the definition of $T$ on upward pointing strands, since the action of $T$ is given by multiplication by a simple reflection in the symmetric group.  

On the other hand we also have the following relations involving oppositely pointing strands (for simplicity we omit shifts in the rest of this section):
\begin{enumerate}
\item $\fP \fQ \xrightarrow{II} \fP \fQ$ and $\fP \fQ \xrightarrow{T} \fQ \fP \xrightarrow{T} \fP \fQ$ 
\item the identity $\fQ \fP \xrightarrow{II} \fQ \fP$ can be decomposed as the composition $\fQ \fP \xrightarrow{T} \fP \fQ \xrightarrow{T} \fQ \fP$ plus the sum over a basis of elements $b \in B^\G$ of the composition 
$$\fQ \fP \xrightarrow{I X(b)} \fQ \fP \xrightarrow{\adj} \id \xrightarrow{\adj} \fQ \fP \xrightarrow{I X(b^\vee)} \fQ \fP.$$
\end{enumerate}
$$
\begin{tikzpicture}[>=stealth]
\draw  [shift={+(7,0)}](0,0) .. controls (1,1) .. (0,2)[<-][very thick];
\draw  [shift={+(7,0)}](1,0) .. controls (0,1) .. (1,2)[->] [very thick];
\draw  [shift={+(7,0)}](1.5,1) node {=};
\draw  [shift={+(7,0)}](2,0) --(2,2)[<-][very thick];
\draw  [shift={+(7,0)}](3,0) -- (3,2)[->][very thick];

\draw  [shift={+(7.3,0)}](3.5,1) node{$-\sum_{b \in \mathcal{B}}$};

\draw  [shift={+(7,0)}](4,1.75) arc (180:360:.5) [very thick];
\draw  [shift={+(7,0)}](4,2) -- (4,1.75) [very thick];
\draw  [shift={+(7,0)}](5,2) -- (5,1.75) [very thick][<-];
\draw  [shift={+(7,0)}](5,.25) arc (0:180:.5) [very thick];
\filldraw [blue]  [shift={+(7,0)}](5,1.66) circle (2pt);
\draw  [shift={+(7,0)}](5,1.66) node [anchor=west] {$b$};
\filldraw [blue]  [shift={+(7,0)}](5,0.33) circle (2pt);
\draw  [shift={+(7,0)}](5,.33) node [anchor=west] {$b^\vee$};
\draw  [shift={+(7,0)}](5,0) -- (5,.25) [very thick];
\draw  [shift={+(7,0)}](4,0) -- (4,.25) [very thick][<-];

\draw (0,0) .. controls (1,1) .. (0,2)[->][very thick];
\draw (1,0) .. controls (0,1) .. (1,2)[<-] [very thick];
\draw (1.5,1) node {=};
\draw (2,0) --(2,2)[->][very thick];
\draw (3,0) -- (3,2)[<-][very thick];
\end{tikzpicture}
$$
Relation (1) is straight-forward to check. The first composition is $C_1 \xrightarrow{T} C_2 \xrightarrow{T} C_1$ where 
\begin{eqnarray*}
C_1 &:=& A_n^\G \otimes_{A_{n-1}^\G \otimes A_1^\G} (A_{n-1}^\G \otimes \k[\G] \otimes \k[\G]) \otimes_{A_{n-1}^\G \otimes A_1^\G} A_n^\G \\
C_2 &:=& (A_n^\G \otimes \k[\G]) \otimes_{A_n^\G \otimes A_1^\G} A_{n+1}^\G \otimes_{A_n^\G \otimes A_1^\G} (A_n^\G \otimes \k[\G]).
\end{eqnarray*}
This composition is given by 
$$1 \otimes (1 \otimes 1 \otimes 1) \otimes 1 \mapsto (1 \otimes 1) \otimes s_n \otimes (1 \otimes 1) \mapsto 1 \otimes (1 \otimes 1 \otimes 1) \otimes 1$$
and is thus equal to the identity. 

Relation (2) is more interesting. The composition $\fQ \fP$ is given by the $(A_n^\G, A_n^\G)$-bimodule
$$(A_n^\G \otimes \k[\G]) \otimes_{A_n^\G \otimes A_1^\G} A_{n+1}^\G \otimes_{A_n^\G \otimes A_1^\G} (A_n^\G \otimes \k[\G]).$$
Since the tensor products are derived we need to resolve one copy of $\k[\G]$. We resolve the second copy of $\k[\G]$. We end with a complex where the terms are 
$$\bigoplus_{k=0}^2 (A_n^\G \otimes \k[\G]) \otimes_{A_n^\G \otimes A_1^\G} A_{n+1}^\G \otimes_{A_n^\G \otimes A_1^\G} (A_n^\G \otimes (A_1^\G \otimes \wedge^k V^\vee)) [k].$$
Now, up to the $(A_n^\G, A_n^\G)$ action any element is of the form 
$$(1 \otimes \gamma') \otimes 1 \otimes (1 \otimes (1 \otimes w)) \text{ or } (1 \otimes \gamma') \otimes s_n \otimes (1 \otimes (1 \otimes w))$$
where $w \in \wedge^k V^\vee$. 

If it is an element of the first form then the differential becomes zero and the map $T: \fQ \fP \rightarrow \fP \fQ$ maps it to zero. On the other hand, suppose $w = w_1$, then the composition
$$\fQ \fP \xrightarrow{I X(b)} \fQ \fP \xrightarrow{\adj} \id \xrightarrow{\adj} \fQ \fP \xrightarrow{I X(b^\vee)} \fQ \fP$$ 
where $b = (v, \gamma) \in B^\G$ is zero unless $v \in V$ in which case we get 
\begin{eqnarray*}
& & (1 \otimes \gamma') \otimes 1 \otimes (1 \otimes (1 \otimes w_1)) \\
&\xrightarrow{IX(v)}& (1 \otimes \gamma') \otimes 1 \otimes (1 \otimes (1 \la v, w_1 \ra))  \\
&\xrightarrow{IX(\gamma)}& (1 \otimes \gamma') \otimes 1 \otimes (1 \otimes (\gamma \la v, w_1 \ra)) \\
&\xrightarrow{\adj}& \tr(\gamma' \gamma) \la v, w_1 \ra \\
&\xrightarrow{\adj}& \tr(\gamma' \gamma) \la v, w_1 \ra (1 \otimes 1) \otimes 1 \otimes (1 \otimes (1 \otimes w_1 \wedge w_2)) \\
&\xrightarrow{IX(\gamma^{-1})}& \tr(\gamma' \gamma) \la v, w_1 \ra (1 \otimes 1) \otimes 1 \otimes (1 \otimes (\gamma^{-1} \otimes w_1 \wedge w_2)) \\
&\xrightarrow{IX(v^\vee)}& \tr(\gamma' \gamma) \la v, w_1 \ra (1 \otimes 1) \otimes 1 \otimes (1 \otimes (\gamma^{-1} \otimes w_1 \la w_2, v^\vee \ra - \gamma^{-1} \otimes w_2 \la w_1, v^\vee \ra)) 
\end{eqnarray*}
Now we sum over all $\gamma \in \G$ and over $v=v_1, v_2$. Since $\la w_i, v_j \ra = \delta_{ij}$ and $\tr(\gamma' \gamma) = 0$ unless $\gamma' = \gamma^{-1}$ we get 
$$ \la v_1, w_1 \ra (1 \otimes 1) \otimes 1 \otimes (1 \otimes (\gamma' \otimes w_1 \la w_2, v_2 \ra - \gamma' \otimes w_2 \la w_1, v_2 \ra )) = (1 \otimes \gamma') \otimes 1 \otimes (1 \otimes (1 \otimes w_1)).$$
Thus the composition acts by the identity if $w = w_1$ and similarly if $w = w_2$. Likewise, one can show that it also acts by the identity if $w = 1$ or $w = w_1 \wedge w_2$. 

If we are dealing with an element of the second form then it is in the image of the differential unless $k=0$. So we can assume $k=0$ in which case we need to consider 
$$(1 \otimes \gamma) \otimes s_n \otimes (1 \otimes 1) \in (A_n^\G \otimes \k[\G]) \otimes_{A_n^\G \otimes A_1^\G} A_{n+1}^\G \otimes_{A_n^\G \otimes A_1^\G} (A_n^\G \otimes A_1^\G).$$ 
The composition involving cups and caps maps it to zero (because the cap takes it to zero). Meanwhile the composition $\fQ \fP \xrightarrow{T} \fP \fQ \xrightarrow{T} \fP \fQ$ maps it as follows
$$(1 \otimes \gamma) \otimes s_n \otimes (1 \otimes 1) \mapsto 1 \otimes (1 \otimes 1 \otimes \gamma) \otimes 1 \mapsto (1 \otimes 1) \otimes s_n \otimes (\gamma \otimes 1) = (1 \otimes \gamma) \otimes s_n \otimes (1 \otimes 1).$$ 
Thus in both cases we see that the sum of the two compositions takes the element to itself. This proves relation (2). 

\subsection{Counter-clockwise circles and curls}

Finally we show that
\begin{enumerate}
\item the composition $\fP \xrightarrow{\adj I} \fQ \fP \fP [-1]\{1\} \xrightarrow{IT} \fQ \fP \fP [-1]\{1\} \xrightarrow{\adj I} \fP [-2]\{2\}$ is zero
\item the composition 
$$\id \xrightarrow{\adj} \fQ \fP [-1]\{1\} \xrightarrow{IX(b)} \fQ \fP [-1+|b|]\{1-|b|\} \xrightarrow{\adj} \id [-2+|b|]\{2-|b|\}$$ 
is equal to $\id \xrightarrow{\tr(b) I} \id$ for any $b \in B^\G$.
\end{enumerate}

$$
\begin{tikzpicture}[>=stealth]
\draw [shift={+(5,0)}](0,0) arc (180:360:0.5cm) [very thick];
\draw [shift={+(5,0)}][->](1,0) arc (0:180:0.5cm) [very thick];
\filldraw [shift={+(6,0)}][blue](0,0) circle (2pt);
\draw [shift={+(5,0)}](1,0) node [anchor=east] {$b$};
\draw [shift={+(5,0)}](1.75,0) node{$= \tr(b).$};

\draw  (0,0) .. controls (0,.5) and (.7,.5) .. (.9,0) [very thick];
\draw  (0,0) .. controls (0,-.5) and (.7,-.5) .. (.9,0) [very thick];
\draw  (1,-1) .. controls (1,-.5) .. (.9,0) [very thick];
\draw  (.9,0) .. controls (1,.5) .. (1,1) [->] [very thick];
\draw  (1.5,0) node {$=$};
\draw  (2,0) node {$0,$};

\end{tikzpicture}
$$
The first composition is straight forward and follows for the same reason that the composition 
$\id \rightarrow \fQ \fP [-1]\{1\} \rightarrow \id [-2]\{2\}$ is zero. Alternatively, the left-twist curl is a map $\fP \rightarrow \fP [-2]\{2\}$ and every such map is zero by degree considerations. 

For the second composition we just need to check the case $b = v_1 \wedge v_2$ (see Section \ref{sec:relremarks}). In that case one may note that working $\k^\times$-equivariantly the only degree zero maps $\id \rightarrow \id$ are given by scalars.  The result follows by the remark in the first part of Section \ref{sec:relremarks}. 

Alternatively, one can prove the second relation above directly. We must show that the composition
$$\id \xrightarrow{\adj} \fQ \fP [-1]\{1\} \xrightarrow{IX(v_1)} \fQ \fP \xrightarrow{IX(v_2)} \fQ \fP [1]\{-1\} \xrightarrow{\adj} \id$$ 
is equal to the identity. 

The first map is given by 
\begin{eqnarray*}
1 &\mapsto& (1 \otimes 1) \otimes 1 \otimes ((1 \otimes w_1 \wedge w_2) \otimes 1) \\
& & - (1 \otimes (w_1 \otimes 1)) \otimes 1 \otimes ((1 \otimes w_2) \otimes 1) + (1 \otimes (w_2 \otimes 1)) \otimes 1 \otimes ((1 \otimes w_1) \otimes 1) \\
& & - (1 \otimes (w_1 \wedge w_2 \otimes 1) \otimes 1 \otimes (1 \otimes 1)
\end{eqnarray*}
where the right hand terms lie in $\oplus_{k+l=2} (A_n^\G \otimes (\wedge^k V^\vee \otimes A_1^\G)) \otimes_{A_n^\G \otimes A_1^\G} A_{n+1}^\G \otimes_{A_1^\G \otimes A_n^\G} ((A_1^\G \otimes \wedge^l V^\vee) \otimes A_n^\G)$. The only term that survives after acting by $IX(v_1)$ and $IX(v_2)$ is the first one, which is then mapped 
\begin{eqnarray*}
(1 \otimes 1) \otimes 1 \otimes ((1 \otimes w_1 \wedge w_2) \otimes 1) 
&\xrightarrow{IX(v_1)}& (1 \otimes 1) \otimes 1 \otimes ((\la w_1, v_1 \ra 1 \otimes w_2 - \la w_2, v_1 \ra 1 \otimes w_2) \otimes 1) \\
&\xrightarrow{IX(v_2)}& (1 \otimes 1) \otimes 1 \otimes (\la w_2, v_2 \ra \la w_1, v_1 \ra 1 \otimes 1) \\
&\xrightarrow{\adj}& \tr(1) 1 = 1.
\end{eqnarray*}
Here we have used the fact that $\la w_i, v_j \ra = \delta_{ij}$.


\section{The alternate 2-category $\H^\G$}\label{sec:graphical2}
In order to describe the structure of the 2-category $\H_\G$, it is convenient to introduce yet another 2-category $\H^\G$. The 2-categories $\H^\G$ and $\H_\G$ are not equivalent, but are rather ``Morita equivalent''. This means that they have the same 0-morphisms and 1-morphisms and that for any 1-morphism $A$ the algebras $\End_{\H_\G}(A)$ and $\End_{\H^\G}(A)$ are Morita equivalent. This should mean that the 2-representation theories of $\H^\G$ and $\H_\G$ are equivalent. 

To be more precise, recall that if $\G$ is a finite group then $\k[\G]$ is Morita equivalent (though not necessarily isomorphic) to a basic algebra $B(\G)$ which is a direct sum of several copies of $\k$, one copy for each irreducible representation of $\G$. It is sometimes convenient to replace $\k[\G]$ with $B(\G)$, whose representation theory is the same. 

In our case, if $\G$ is of type A then $\k[\G] \cong B(\G)$ and hence $\H_\G$ and $\H^\G$ are actually isomorphic. If $\G$ is not of type A then $\k[\G]$ will contain blocks of dimension greater than one and then $\k[\G] \not\cong B(\G)$. To go from $\H_\G$ to $\H^\G$ we simply replace $\k[\G]$ by $B(\G)$ in our constructions. Thus $\H^\G$ will be ``Morita equivalent'' to $\H_\G$ but its graphical calculus for 2-morphisms is easier to describe and visualize because there are fewer idempotents. Moreover, there is a canonical isomorphism of algebras $K_0(\H^\G) \cong K_0(\H_\G)$ (Proposition \ref{prop:Kgroups}) and so $\H^\G$ is also a categorification of $\h_\G$ which is of independent interest.

\subsection{The 2-category $\H^\G$}
We now give an explicit description of $\H^\G$. Fix an orientation $\epsilon$ of the Dynkin diagram.  If vertices $i$ and $j$ are connected by an edge in the diagram, we set $\epsilon_{ij} = 1$ if oriented edge with tail $i$ and head $j$ agrees with the orientation $\epsilon$, and $\epsilon_{ij} = -1$ if the oriented edge with tail $i$ and head $j$ disagrees with $\epsilon$.  We also set $\epsilon_{ij} = 0$ if $i$ and $j$ are not connected by an edge. Thus $\epsilon_{ij} = -\epsilon_{ji}$.  

We define an additive $\k$-linear 2-category ${\H'}^\G$ as follows. As before, the objects of ${\H'}^\G$ are indexed by the integers $\Z$. The 1-morphisms are generated by objects $P_i$ and $Q_i$, one for each $i \in I_\G$. Thus a 1-morphism of ${\H'}^\G$ is a finite composition (sequence) of $P_i$'s and $Q_j$'s.

The space of 2-morphisms between two 1-morphisms is the $\k$-algebra generated by suitable planar diagrams modulo local relations. The diagrams consist of oriented compact one-manifolds (possibly carrying dots and crossings) immersed into the plane strip $\R \times [0,1]$ modulo isotopies fixing the boundary. Each such strand is labeled by some $i \in I_\G$. 

Corresponding to the edge in the Dynkin diagram connecting vertex $i$ to vertex $j$ there is a 2-morphism $X: P_i \rightarrow P_j \la 1 \ra$ denoted by a solid dot: 
$$
\begin{tikzpicture}[>=stealth]
\draw (0,0) -- (0,1) [->][very thick];
\draw (0,-.25) node {$i$};
\filldraw [blue] (0,.5) circle (2pt);
\draw (0,1.25) node {$j$};
\end{tikzpicture}
$$
These dots are allowed to move freely along the one-manifold, including along the cups and caps and through crossings. However, as was the case with degree one dots in $\H'_\G$, dots between strands labeled by distinct nodes $i$ and $j$(which have degree one) on far away strands anticommute when they move past one another.

For each $i \in I_\G$ we also include as a defining 2-morphism a solid dot on a strand on a strand whose endpoints are both labeled $i$
$$
\begin{tikzpicture}[>=stealth]
\draw (0,0) -- (0,1) [->][very thick];
\draw (0,-.25) node {$i$};
\filldraw [blue] (0,.5) circle (2pt);
\draw (0,1.25) node {$i$};
\end{tikzpicture}
$$
and denote this map $X: P_i \rightarrow P_i \la 2 \ra$. These $ii$ dots also move freely on strands 

There is a relation which governs the composition of dots: 
\begin{equation}\label{eq:rel0'}
\begin{tikzpicture}[>=stealth]
\draw (0,0) -- (0,2) [->][very thick];
\draw (0,-.25) node {$i$};
\filldraw [blue] (0,.66) circle (2pt);
\draw (-.25,1) node {$j$};
\filldraw [blue] (0,1.33) circle (2pt);
\draw (0,2.25) node {$k$};
\draw (.5,1) node {$=$};
\draw (1.33,1) node {$\delta_{ik}\epsilon_{ij}$} ;
\draw (2,0) -- (2,2) [->][very thick];
\draw (2,-.25) node {$i$};
\filldraw [blue] (2,1) circle (2pt);
\draw (2,2.25) node {$i$};
\end{tikzpicture}
\end{equation}

Next, we have the following relations analogous to relations (\ref{eq:rel1}) - (\ref{eq:rel3}). For all $i,j,k \in I_\G$ we have 
\begin{equation}\label{eq:rel1'}
\begin{tikzpicture}[>=stealth]
\draw (0,0) .. controls (1,1) .. (0,2)[->][very thick] ;
\draw (1,0) .. controls (0,1) .. (1,2)[->] [very thick];
\draw (1.5,1) node {=};
\draw (2,0) --(2,2)[->][very thick] ;
\draw (3,0) -- (3,2)[->][very thick] ;
\draw (0,-.25) node {$i$};
\draw (1,-.25) node {$j$};
\draw (2,-.25) node {$i$};
\draw (3,-.25) node {$j$};

\draw [shift={+(5,0)}](0,0) -- (2,2)[->][very thick];
\draw [shift={+(5,0)}](2,0) -- (0,2)[->][very thick];
\draw [shift={+(5,0)}](1,0) .. controls (0,1) .. (1,2)[->][very thick];
\draw [shift={+(5,0)}](0,-.25) node {$i$};
\draw [shift={+(5,0)}](1,-.25) node {$j$};
\draw [shift={+(5,0)}](2,-.25) node {$k$};
\draw [shift={+(5,0)}](2.5,1) node {=};
\draw [shift={+(5,0)}](3,0) -- (5,2)[->][very thick];
\draw [shift={+(5,0)}](5,0) -- (3,2)[->][very thick];
\draw [shift={+(5,0)}](4,0) .. controls (5,1) .. (4,2)[->][very thick];
\draw [shift={+(5,0)}](3,-.25) node {$i$};
\draw [shift={+(5,0)}](4,-.25) node {$j$};
\draw [shift={+(5,0)}](5,-.25) node {$k$};
\end{tikzpicture}
\end{equation}

\begin{equation}\label{eq:rel2'}
\begin{tikzpicture}[>=stealth]
\draw [shift={+(2,1)}](0,0) arc (180:360:0.5cm) [very thick];
\draw [shift={+(2,1)}][->](1,0) arc (0:180:0.5cm) [very thick];
\filldraw [shift={+(2,1)}] [blue](1,0) circle (2pt);
\draw [shift={+(2,1)}](1.50,0) node{$= \mathbf{1}$};
\draw [shift={+(2,1)}](.5,-.75) node {$i$};
\draw [shift={+(2,1)}](.5,.75) node {$i$};

\draw  [shift={+(6,1)}](0,0) .. controls (0,.5) and (.7,.5) .. (.9,0) [very thick];
\draw  [shift={+(6,1)}](0,0) .. controls (0,-.5) and (.7,-.5) .. (.9,0) [very thick];
\draw  [shift={+(6,1)}](1,-1) .. controls (1,-.5) .. (.9,0) [very thick];
\draw  [shift={+(6,1)}](.9,0) .. controls (1,.5) .. (1,1) [->] [very thick];
\draw  [shift={+(6,1)}](1.5,0) node {$=$};
\draw  [shift={+(6,1)}](2,0) node {$0.$};
\draw (7,-0.25) node {$i$};
\end{tikzpicture}
\end{equation}

If $i \ne j$ then 
\begin{equation}\label{eq:rel3'}
\begin{tikzpicture}[>=stealth]
\draw (0,0) .. controls (1,1) .. (0,2)[<-][very thick];
\draw (1,0) .. controls (0,1) .. (1,2)[->] [very thick];
\draw (0,-.25) node {$i$};
\draw (1,-.25) node {$j$};
\draw (1.5,1) node {=};
\draw (2,0) --(2,2)[<-][very thick];
\draw (3,0) -- (3,2)[->][very thick];
\draw (2,-.25) node {$i$};
\draw (3,-.25) node {$j$};
\draw (3.75,1) node {$-\ \epsilon_{ij}$};

\draw (4,1.75) arc (180:360:.5) [very thick];
\draw (4,2) -- (4,1.75) [very thick];
\draw (5,2) -- (5,1.75) [very thick][<-];
\draw (5,.25) arc (0:180:.5) [very thick];
\filldraw [blue] (4.5,1.25) circle (2pt);
\filldraw [blue] (4.5,0.75) circle (2pt);
\draw (5,0) -- (5,.25) [very thick];
\draw (4,0) -- (4,.25) [very thick][<-];
\draw (4,-.25) node {$i$};
\draw (5,-.25) node {$j$};
\draw (4,2.25) node {$i$};
\draw (5,2.25) node {$j$};
\end{tikzpicture}
\end{equation}
while 
\begin{equation}\label{eq:rel4'}
\begin{tikzpicture}[>=stealth]
\draw (0,0) .. controls (1,1) .. (0,2)[<-][very thick];
\draw (1,0) .. controls (0,1) .. (1,2)[->] [very thick];
\draw (0,-.25) node {$i$};
\draw (1,-.25) node {$i$};
\draw (1.5,1) node {=};
\draw (2,0) --(2,2)[<-][very thick];
\draw (3,0) -- (3,2)[->][very thick];
\draw (2,-.25) node {$i$};
\draw (3,-.25) node {$i$};
\draw (3.5,1) node {$-$};
\draw (4,1.75) arc (180:360:.5) [very thick];
\draw (4,2) -- (4,1.75) [very thick];
\draw (5,2) -- (5,1.75) [very thick][<-];
\draw (5,.25) arc (0:180:.5) [very thick];
\filldraw [blue] (4.5,1.25) circle (2pt);
\draw (5,0) -- (5,.25) [very thick];
\draw (4,0) -- (4,.25) [very thick][<-];
\draw (4,-.25) node {$i$};
\draw (5,-.25) node {$i$};
\draw (4,2.25) node {$i$};
\draw (5,2.25) node {$i$};
\draw (5.5,1) node {$-$};
\filldraw [blue] (6.5,0.75) circle (2pt);
\draw (6,1.75) arc (180:360:.5) [very thick];
\draw (6,2) -- (6,1.75) [very thick];
\draw (7,2) -- (7,1.75) [very thick][<-];
\draw (7,.25) arc (0:180:.5) [very thick];
\draw (7,0) -- (7,.25) [very thick];
\draw (6,0) -- (6,.25) [very thick][<-];
\draw (6,-.25) node {$i$};
\draw (7,-.25) node {$i$};
\draw (6,2.25) node {$i$};
\draw (7,2.25) node {$i$};
\end{tikzpicture}
\end{equation}

We assign a $\Z$ grading on the space of planar diagrams by defining
$$
\begin{tikzpicture}[>=stealth]
\draw  (-.5,.5) node {$deg$};
\draw [->](0,0) -- (1,1) [very thick];
\draw [->](1,0) -- (0,1) [very thick];
\draw (1.5,.5) node{$ = 0$};
\end{tikzpicture}
$$
$$
\begin{tikzpicture}[>=stealth]
\draw  (-.5,-.25) node {$deg$};
\draw (0,0) arc (180:360:.5)[->] [very thick];
\draw (1.75,-.25) node{$ = deg$};
\draw (3.5,-.5) arc (0:180:.5) [->][very thick];
\draw (4.5,-.25) node{$ =-1$};
\end{tikzpicture}
$$
$$
\begin{tikzpicture}[>=stealth]
\draw  (-.5,-.25) node {$deg$};
\draw (0,0) arc (180:360:.5)[<-] [very thick];
\draw (1.75,-.25) node{$ = deg$};
\draw (3.5,-.5) arc (0:180:.5) [<-][very thick];
\draw (4.5,-.25) node{$ = 1$};
\end{tikzpicture}
$$
We define the degree of an $ij$ dot to be one (if $i \ne j$ are joined by an edge) and consequently the degree of an $ii$ dot to be two. Equipped with these assignments all the graphical relations are graded. This provides a $\Z$ grading on ${\H'}^{\G}$. 

Define $\H^\G$ to be the Karoubi envelope of ${\H'}^\G$. Since $\k[S_n] \subset \End(P_i^n)$ we let $P_i^{\lambda} := (P_i^n, e_\lambda)$ where $e_\lambda$ is the minimal idempotent of $\k[S_n]$ associated to the partition $\lambda$ of $n$. We define $Q_i^\lambda$ similarly. 

\subsection{The functor $\eta: \H^\G \longrightarrow \H_\G$}

We will define a functor ${\H'}^\G \longrightarrow \H_\G$ (this induces the functor $\eta: \H^\G \rightarrow \H_\G$ since $\H_\G$ is idempotent complete). 

To define this functor we map $P_i$ to $(P, e_{i,1})$ and by adjunction $Q_i$ to $(Q, e_{i,1})$. Now the 2-morphism $X: P_i \rightarrow P_i \la 2 \ra$ is mapped to the 2-morphism 
$$\frac{|\G|}{\dim(V_i)} e_{i,1} \cdot \omega \cdot e_{i,1}: (P, e_{i,1}) \rightarrow (P, e_{i,1}) \la 2 \ra$$
(the factor $\frac{|\G|}{\dim(V_i)}$ is there to ensure that the left relation in (\ref{eq:rel2'}) holds). 

Now suppose $i, j \in I_\G$ are joined by an oriented edge $i \rightarrow j$. Then $\Hom_{\H_\G}((P, e_{i,1}), (P, e_{j,1}))$ is one-dimensional, spanned by some map $\alpha_{ij}$. So we map the dot $X: P_i \rightarrow P_j \la 1 \ra$ to some multiple of $\alpha_{ij}$. To determine this multiple consider the composition 
$$\alpha_{ji} \circ \alpha_{ij}: (P, e_{i,1}) \rightarrow (P, e_{i,1}) \la 2 \ra$$
which is non-zero because of the structure of $B^\G$. Since $\Hom_{\H_\G}((P, e_{i,1}), (P, e_{i,1}))$ is spanned by $e_{i,1} \cdot \omega \cdot e_{i,1}$ this means that $\alpha_{ji} \circ \alpha_{ij}$ equals $e_{i,1} \cdot \omega \cdot e_{i,1}$ up to some non-zero multiple. We rescale $\alpha_{ij}$ so that $\alpha_{ji} \circ \alpha_{ij} = \frac{|\G|}{\dim(V_i)} e_{i,1} \cdot \omega \cdot e_{i,1}$. 

Finally, we map crossings $T:P_iP_j \rightarrow P_jP_i$ to crossings $T: (P, e_{i,1})(P, e_{j,1}) \rightarrow (P, e_{j,1}) (P, e_{i,1})$. 

\begin{prop}\label{prop:H^G->H_G} The above morphisms define a functor  $\eta: \H^\G \longrightarrow \H_\G$.
\end{prop}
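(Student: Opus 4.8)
The plan is to build the $2$-functor on the non-completed category ${\H'}^\G$; since $\H_\G$ is idempotent complete this extends uniquely to $\eta\colon\H^\G\to\H_\G$, and since the $1$-morphisms of ${\H'}^\G$ are freely generated by the $P_i,Q_i$, everything reduces to checking that the prescribed assignment on generators is graded and respects each of the defining relations \eqref{eq:rel0'}, \eqref{eq:rel1'}, \eqref{eq:rel2'}, \eqref{eq:rel3'}, \eqref{eq:rel4'}. Two elementary facts about $B^\G=\Lambda^*(V)\rtimes\G$ will be used throughout: the top form $(\omega,1)$ is central in $B^\G$, since $\G\subset SL_2(\k)$ acts trivially on $\Lambda^2V$; and, with $\tr$ normalized so $\tr(1)=1$, one has $\tr(\omega\,e_{i,1})=\dim(V_i)/|\G|$ (the coefficient of the identity in $e_{i,1}$ being $\dim(V_i)/|\G|$), which is exactly why the factor $|\G|/\dim(V_i)$ in the definition of $\eta$ makes the closed $ii$-dot equal $\mathbf{1}$. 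I will also freely use, as recorded in the construction of $\eta$ (and reflecting the McKay correspondence, $\dim e_{j,1}(V\otimes\k[\G])e_{i,1}=\operatorname{mult}(V_j,V_i\otimes V)$, together with centrality of $(\omega,1)$ and $e_{k,1}\k[\G]e_{i,1}=0$ for $k\neq i$), that $\Hom_{\H_\G}((P,e_{i,1}),(P,e_{j,1}))$ is one-dimensional and concentrated in degree $1$ when $i\neq j$ are joined by an edge, vanishes when $i\neq j$ are not joined, and for $i=j$ has one-dimensional pieces in degrees $0$ and $2$ spanned by $e_{i,1}$ and $e_{i,1}\omega e_{i,1}$. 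Grading preservation is then immediate: the edge-dot $X\colon P_i\to P_j$ has degree $1$ and maps into this purely degree-$1$ space; the $ii$-dot has degree $2$ and maps to $e_{i,1}\omega e_{i,1}$; crossings have degree $0$ and map to idempotent-truncations of the crossing on $P^2$; and cups/caps map to the $e$-decorated cups/caps of $\H_\G$, with unchanged degree.

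For relation \eqref{eq:rel0'}, the composite of the two edge-dot images lands in the degree-$2$ part of $\Hom_{\H_\G}((P,e_{i,1}),(P,e_{k,1}))$, which vanishes unless $k=i$ (giving the $\delta_{ik}$), and for $k=i$ is a scalar multiple of $e_{i,1}\omega e_{i,1}$. The scalar is pinned down using the supersymmetric non-degenerate form $\langle a,b\rangle=\tr(ab)$: for the edge oriented in agreement with $\epsilon$ it equals $1=\epsilon_{ij}$ by the rescaling built into $\eta$, while for the opposite order, writing the two dot images as $\alpha_{ij},\alpha_{ji}$ (degree $1$), supersymmetry gives $\tr(\alpha_{ij}\alpha_{ji})=-\tr(\alpha_{ji}\alpha_{ij})=-1$, whence $\alpha_{ij}\alpha_{ji}=-\tfrac{|\G|}{\dim V_j}e_{j,1}\omega e_{j,1}$, i.e.\ $\epsilon_{ji}$ times the $\eta$-image of the $jj$-dot; so the $\epsilon$-sign emerges automatically from the super structure. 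For relations \eqref{eq:rel1'}, $\eta$ sends crossings to idempotent-truncations of the $\H_\G$-crossing, which as an element of $\End_{\H'_\G}(P^2)$ is the super-transposition $s_1\in B_2^\G$; since $e_{i,1},e_{j,1}$ are even one has $s_1(e_{i,1}\otimes e_{j,1})=(e_{j,1}\otimes e_{i,1})s_1$, so \eqref{eq:rel1'} is the appropriate idempotent-truncation of the relations \eqref{eq:rel1} of $\H'_\G$ (involutivity of the upward crossing and the triple-point/Yang--Baxter relation). For relations \eqref{eq:rel2'}, the left curl on $(P,e_{i,1})$ is the $e_{i,1}$-truncation of the left curl on $P$, which is $0$ in $\H'_\G$; and by relation \eqref{eq:rel3} of $\H'_\G$ the closed circle carrying the image of the $ii$-dot equals $\tr\!\big(e_{i,1}\cdot\tfrac{|\G|}{\dim V_i}\omega\cdot e_{i,1}\big)\cdot\mathbf{1}=\tfrac{|\G|}{\dim V_i}\tr(\omega e_{i,1})\cdot\mathbf{1}=\mathbf{1}$.

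The remaining relations \eqref{eq:rel3'} and \eqref{eq:rel4'} are the $(e_{i,1},e_{j,1})$-truncations of the ``wrong-way'' crossing relation \eqref{eq:rel2} of $\H'_\G$, which rewrites a $QP$-crossing as the identity minus $\sum_{b\in\mathcal{B}}$ a cup--cap decorated by $b$ and $b^\vee$. The canonical element $\sum_{b\in\mathcal{B}}b\otimes b^\vee$, truncated by the idempotents, becomes the corresponding element for $e_{i,1}B^\G e_{i,1}$ (resp.\ for the pairing of $e_{j,1}B^\G e_{i,1}$ with $e_{i,1}B^\G e_{j,1}$): for $i\neq j$ not joined by an edge it is $0$, recovering the $\epsilon_{ij}=0$ case of \eqref{eq:rel3'}; for $i\neq j$ joined by an edge it collapses to the single term $-\alpha_{ij}\otimes\alpha_{ji}$ (the sign again from $\tr(\alpha_{ij}\alpha_{ji})=-1$), and after accounting for the rescaling in $\eta$ the resulting correction is exactly $-\epsilon_{ij}$ times the $ij$-dotted cup--cap of \eqref{eq:rel3'}; and for $i=j$, where $e_{i,1}B^\G e_{i,1}$ is two-dimensional with dual basis $\{e_{i,1},\,e_{i,1}\omega e_{i,1}\}\leftrightarrow\{\tfrac{|\G|}{\dim V_i}e_{i,1}\omega e_{i,1},\,\tfrac{|\G|}{\dim V_i}e_{i,1}\}$, the sum has two terms which become the two correction terms of \eqref{eq:rel4'} (one with the $ii$-dot on the ``cap'' side, one with it on the ``cup'' side; the would-be extra term carries $\omega^2=0$), up to the signs forced by the super grading on the cup--cap diagrams. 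The remaining oppositely-oriented crossing relation is handled identically.

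The main obstacle is precisely the idempotent-truncation analysis of \eqref{eq:rel2} just sketched: although every relation of ${\H'}^\G$ reduces to a relation already available in $\H'_\G$, one must carefully carry the idempotents $e_{i,1},e_{j,1}$ through the sum $\sum_{b\in\mathcal{B}}$, identify the surviving term(s) with the explicit $\epsilon_{ij}$-decorated diagrams of \eqref{eq:rel3'}--\eqref{eq:rel4'}, and reconcile every sign arising from supersymmetry of $\tr$, from the super-permutation action on $B_2^\G$, and from the grading conventions for cups, caps and crossings. Everything else is bookkeeping.
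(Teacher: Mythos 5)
Your proposal is correct and follows essentially the same route as the paper: check the defining relations \eqref{eq:rel0'}--\eqref{eq:rel4'}, with \eqref{eq:rel1'} and the curl relation immediate, the dotted circle handled by $\tr(\omega\,e_{i,1})=\dim(V_i)/|\G|$, and \eqref{eq:rel3'}--\eqref{eq:rel4'} obtained by truncating relation \eqref{eq:rel2} of $\H'_\G$ with the idempotents $e_{i,1},e_{j,1}$ so that the sum $\sum_{b}b\otimes b^\vee$ collapses to the expected one or two terms. Your determination of the signs $\epsilon_{ij}$ via supersymmetry of the trace form is just the algebraic form of the paper's graphical argument (splitting the $ii$-dot on a closed circle, resp.\ capping off with an $ij$-dotted cap), so there is no substantive difference.
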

\begin{proof}
We need to check relations (\ref{eq:rel0'}) - (\ref{eq:rel4'}). Relations (\ref{eq:rel1'}) and the right hand relation in (\ref{eq:rel2'}) are clear. The left hand relation in (\ref{eq:rel2'}) holds since 
$$\tr(\omega \cdot  e_{i,1}) = \frac{\dim(V_i)}{|\G|}$$
and we mapped $X: P_i \rightarrow P_i \la 2 \ra$ to $\frac{|\G|}{\dim(V_i)} e_{i,1} \cdot \omega \cdot e_{i,1}$. 

Relation (\ref{eq:rel0'}) holds by definition if $\epsilon_{ij}=1$. If $\epsilon_{ij}=-1$ then $\epsilon_{ji} = 1$ and so we have 
$$
\begin{tikzpicture}[>=stealth]
\draw (0,0) arc (180:360:0.5cm) [very thick];
\draw [->](1,0) arc (0:180:0.5cm) [very thick];
\filldraw [blue](1,0) circle (2pt);
\draw (- 0.80,0) node{${\mathbf{1}} = $};
\draw (.5,-.75) node {$i$};
\draw (.5,.75) node {$i$};
\draw (1.50,0) node{$ = $};

\draw [shift={+(2,0)}](0,0) arc (180:360:0.5cm) [very thick];
\draw [shift={+(2,0)}][->](1,0) arc (0:180:0.5cm) [very thick];
\filldraw [shift={+(2,0)}][blue](0.80,.40) circle (2pt);
\filldraw [shift={+(2,0)}][blue](0.80,-.40) circle (2pt);
\draw [shift={+(2,0)}](.5,-.75) node {$i$};
\draw [shift={+(2,0)}](.5,.75) node {$i$};
\draw [shift={+(2,0)}](1.25,0) node {$j$};
\draw [shift={+(2,0)}](1.75,0) node{$ = $};

\draw [shift={+(4.75,0)}](0,0) arc (180:360:0.5cm) [very thick];
\draw [shift={+(4.75,0)}][->](1,0) arc (0:180:0.5cm) [very thick];
\filldraw [shift={+(4.75,0)}][blue](0.80,.40) circle (2pt);
\filldraw [shift={+(4.75,0)}][blue](0.80,-.40) circle (2pt);
\draw [shift={+(4.75,0)}](.5,-.75) node {$j$};
\draw [shift={+(4.75,0)}](.5,.75) node {$j$};
\draw [shift={+(4.75,0)}](1.25,0) node {$i$};
\draw [shift={+(4.5,0)}](-.25,0) node {$-$};

\end{tikzpicture}
$$
where we use relation (\ref{eq:rel0'}) to obtain the second equality and the minus sign appears when one passes the two degree one dots past each other. It follows that 
$$\alpha_{ij} \circ \alpha_{ji} = - \alpha_{ji} \circ \alpha_{ij} =  - \frac{|\G|}{\dim(V_i)} e_{i,1} \cdot \omega \cdot e_{i,1}$$
which proves relation (\ref{eq:rel0'}) when $\epsilon_{ij}=-1$.

Next we prove relation (\ref{eq:rel4'}). This follows from the left relation in (\ref{eq:rel2}) in the definition of $\H'_\G$. More precisely, multiply endpoints of all strands of the left relation in (\ref{eq:rel2}) by the idempotent $e_{i,1}$. Now $e_{i,1} b e_{i,1} = 0$ unless $b$ has degree zero or two. Now if $b$ has degree zero then it belongs to $\k[\G]$. We use the basis of $\k[\G]$ consisting of matrix units $b$ which are zero everywhere except in one entry. Then $e_{i,1} b e_{i,1} = 0$ unless $b = e_{i,1}$. Thus the sum in (\ref{eq:rel2}) collapses and we get only one term corresponding to $b = e_{i,1}$. 

Similarly, if $b$ has degree two then the sum collapses and we get the term $b = (\omega, e_{i,1})$. Now 
$$e_{i,1}^\vee = \frac{|\G|}{\dim(V_i)} e_{i,1} \cdot \omega \cdot e_{i,1}$$
since $\tr(\omega \cdot e_{i,1}) = \frac{\dim(V_i)}{|\G|}$. This is precisely the image of $X: P_i \rightarrow P_i \la 2 \ra$ under $\eta$. Thus we get relation (\ref{eq:rel4'}). 

Finally, relation (\ref{eq:rel3'}) is similar. In this case only one of the terms in the sum  in (\ref{eq:rel2}) survives. To compute the coefficient $-\epsilon_{ij}$, cap off everything with a cap containing an $ij$ dot and use relations (\ref{eq:rel0'}) and (\ref{eq:rel2'}).
\end{proof}

We will prove in Section \ref{sec:K-theory} that $\eta$ induces an isomorphism on Grothendieck groups: 
 
\begin{prop}\label{prop:Kgroups}
The functor $\eta$ induces an isomorphism
$$ K_0(\eta): K_0(\H^\G) \longrightarrow K_0(\H_\G).$$
Thus both $\H^\G$ and $\H_\G$ categorify the Heisenberg algebra $\h_\G$.
\end{prop}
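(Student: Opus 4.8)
The plan is to factor $K_0(\eta)$ through the isomorphism $\pi$ of Theorem \ref{thm:main1}. The first step, carried out inside $\H^\G$ exactly as for $\H_\G$ in Section \ref{sec:mainthm1}, is to prove the $\H^\G$-analogues of Propositions \ref{prop:basicrel} and \ref{prop:basicrel2}: writing $P_i^{(n)}:=(P_i^n,e_{(n)})$ and $Q_i^{(n)}:=(Q_i^n,e_{(n)})$ with $e_{(n)}\in\k[S_n]$ the trivial symmetrizer, the relations (\ref{eq:rel0'})--(\ref{eq:rel4'}) yield $P_i^{(n)}P_j^{(m)}\cong P_j^{(m)}P_i^{(n)}$, $Q_i^{(n)}Q_j^{(m)}\cong Q_j^{(m)}Q_i^{(n)}$, and the three cases for $Q_i^{(n)}P_j^{(m)}$ ($i=j$, $\la i,j\ra=-1$, $\la i,j\ra=0$), with $H^\star(\P^k)$ appearing when $i=j$. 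These computations are the same as for $\H_\G$ but shorter, $B(\G)$ carrying fewer idempotents than $\k[\G]$; as in Section \ref{sec:mainthm1} they produce a $\k[q,q^{-1}]$-algebra homomorphism $\pi'\colon\h_\G\longrightarrow K_0(\H^\G)$ with $\pi'(p_i^{(n)})=[P_i^{(n)}]$ and $\pi'(q_j^{(m)})=[Q_j^{(m)}]$.

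Next I would check that $K_0(\eta)\circ\pi'=\pi$. Since $\eta$ sends $P_i\mapsto(P,e_{i,1})$, $Q_i\mapsto(Q,e_{i,1})$ and crossings to crossings, it carries $e_{(n)}\in\k[S_n]\subset\End_{\H^\G}(P_i^n)$ to the symmetrizer in $\k[S_n]\subset\End_{\H_\G}((P,e_{i,1})^n)$, so $\eta(P_i^{(n)})$ is exactly the $1$-morphism $(P^n,e_{triv,i,1})=P_i^{(n)}$ of $\H_\G$, and likewise for $Q$. Hence $K_0(\eta)\circ\pi'$ and $\pi$ are algebra homomorphisms agreeing on the generators of $\h_\G$, and therefore coincide. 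As $\pi$ is an isomorphism (Theorem \ref{thm:main1}), it follows at once that $\pi'$ is injective and that $K_0(\eta)$ is surjective.

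It then remains only to show that $\pi'$ is surjective, i.e. that $K_0(\H^\G)$ is generated over $\k[q,q^{-1}]$ by the classes of the divided-power $1$-morphisms. Using the commutation isomorphisms above, every $1$-morphism of $\H^\G$ is, up to grading shift and direct sum, a composite $P_{i_1}\cdots P_{i_a}Q_{j_1}\cdots Q_{j_b}$ whose $P$-part (using $P_iP_j\cong P_jP_i$) is a composite of single-colour powers $P_i^n$; so one must classify the indecomposable summands of $P_i^n$. For this I would compute $\End_{\H^\G}(P_i^n)$ --- directly from the relations, or by identifying it via $\eta$ with the idempotent-truncation $e_{i,1}^{\otimes n}B_n^\G e_{i,1}^{\otimes n}$ of $\End_{\H_\G}(P^n)=B_n^\G$ --- obtaining an algebra $R\rtimes S_n$ with $R=(\k[\omega]/(\omega^2))^{\otimes n}$ a positively graded local ring ($\omega_k$ the degree-two $ii$-dot on strand $k$; here one uses that $V\otimes V_i$ contains no copy of $V_i$ and $\Lambda^2V$ is trivial). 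Such an algebra has the same indecomposable graded projectives as $\k[S_n]$, namely the grading shifts of $P_i^\lambda=(P_i^n,e_\lambda)$, and each $[P_i^\lambda]$ lies in the subalgebra generated by the $[P_i^{(m)}]$ (Schur functions being polynomials in the complete homogeneous ones), hence in the image of $\pi'$. Therefore $\pi'$ is an isomorphism and $K_0(\eta)=\pi\circ(\pi')^{-1}$ is an isomorphism. The main obstacle is exactly this last step --- pinning down the single-node endomorphism algebras and their indecomposable projectives; everything else is formal. (One can also sidestep any circular appeal to Theorem \ref{thm:main1}: by Theorem \ref{thm:main2} the action of $\H_\G$ on $\bigoplus_n D(A_n^\G \dgmod)$ restricts along $\eta$ to an action of $\H^\G$ under which $[P_i^{(n)}]$ and $[Q_j^{(m)}]$ act on the Fock space $\mathcal{F}_\G$ as the linearly independent operators $p_i^{(n)}$ and $q_j^{(m)}$, so $\pi'$ is injective on its own.)
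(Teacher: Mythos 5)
Your overall architecture is close to the paper's: the relations of Proposition \ref{prop:basicrel2} are indeed proved directly inside $\H^\G$ (Section \ref{sec:proofprop2}), giving the homomorphism you call $\pi'$, injectivity is obtained from the Fock space via the 2-representation, and surjectivity is reduced to generation by the divided-power classes. But your main route has a circularity problem: Theorem \ref{thm:main1} cannot be used as a black box here, because in the paper it is proved simultaneously with this proposition, and its proof passes precisely through $K_0(\H^\G)$ and $K_0(\eta)$. Your parenthetical sidestep repairs only the injectivity of $\pi'$. The surjectivity of $K_0(\eta)$ -- equivalently, that $K_0(\H_\G)$ is spanned by classes coming from $\H^\G$ -- still requires the classification of indecomposable 1-morphisms of $\H_\G$ (the paper's Remark \ref{rem:1}, proved by the same degree argument as Proposition \ref{prop:indec}), which you never establish; and your closing formula $K_0(\eta)=\pi\circ(\pi')^{-1}$ again quotes Theorem \ref{thm:main1}.

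The step you yourself flag as the main obstacle is also where your argument, as written, fails. It is not true that $\End_{\H_\G}(P^n)=B_n^\G$, nor that $\End_{\H^\G}(P_i^n)\cong(\k[\omega]/(\omega^2))^{\otimes n}\rtimes S_n$: the algebra $B_n^\G$ is only the subalgebra spanned by braid-like diagrams, and the full endomorphism algebras also contain right-twist curls (the degree-two hollow dots, which conjecturally generate a polynomial algebra) and closed diagrams acting by multiplication; their precise structure is left open in the paper (Conjectures \ref{conj:injective} and \ref{conj:iso}), so a proof cannot rest on such a computation. Fortunately much less is needed: it suffices to know that $\prod_i P_i^{m_i}\prod_i Q_i^{n_i}$ has no negative-degree endomorphisms and that its degree-zero endomorphism algebra is exactly $\otimes_i\k[S_{m_i}]\otimes_i\k[S_{n_i}]$ -- this is Lemma \ref{lem:1}, proved diagrammatically (a negative-degree diagram must contain a left cap or right cup, which yields either a vanishing left-twist curl or a counterclockwise circle of non-negative degree; degree-zero diagrams are braid-like). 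This gives Krull--Schmidt, and, applied to the mixed-colour products (not only the single-colour powers $P_i^n$ your sketch treats), shows that the $\prod_i P_i^{\lambda_i}\prod_i Q_i^{\mu_i}$ are indecomposable and exhaust the indecomposables (Proposition \ref{prop:indec}); generation by $[P_i^{(m)}],[Q_j^{(n)}]$ then follows by Littlewood--Richardson/Giambelli as you indicate (Lemma \ref{lem:decomp}, Corollary \ref{cor:generate}), and running the same argument in $\H_\G$ (Remark \ref{rem:1}) supplies the missing surjectivity of $K_0(\eta)$ without appeal to Theorem \ref{thm:main1}.
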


\subsection{Proof of Proposition \ref{prop:basicrel2}}\label{sec:proofprop2}

In this section we prove that relations from Proposition \ref{prop:basicrel2} hold in $\H^\G$. Sicne we have a functor $\eta: \H^\G \rightarrow \H_\G$ which sends $P_i$ to $P_i$ and $Q_i$ to $Q_i$ these relations also hold in $\H_\G$.  

The idempotent $e_{triv} = \frac{1}{n!} \sum_{g\in S_n} g$ corresponding to the trivial representation in $\k[S_n]$ defines the summand $P_i^{(n)} = (P_i^n,e_{triv})$ of $P_i^n$ in the category $\H^\G$.  These idempotents also define $Q_i^{(n)}$ by adjunction. We will draw $e_{triv}$ as a white rectangle labeled by $i^n$. We will also write a single line connecting two such rectangles instead of $n$ lines, for simplicity. Thus the identity 2-morphism of $P_i^{(n)}$ is drawn as one of the pictures
$$
\begin{tikzpicture}[>=stealth]
\draw (0,0) rectangle (1,.5);
\draw (.5,.25) node {$i^n$};
\draw (0,1.5) rectangle (1,2);
\draw (.5,1.75) node {$i^n$};
\draw (0,.5) -- (0,1.5) [->][very thick];
\draw (.25,.5) -- (.25,1.5) [->][very thick];
\draw (.66,1) node {$\dots$};
\draw (1,.5) -- (1,1.5) [->][very thick];
\draw  (1.5,1) node{$=$};
\draw (2,0) rectangle (3,.5);
\draw (2.5,.25) node {$i^n$};
\draw (2,1.5) rectangle (3,2);
\draw (2.5,1.75) node {$i^n$};
\draw (2.5,.5) -- (2.5,1.5) [->][very thick];
\end{tikzpicture}
$$

We first prove that
$$
P_i^{(n)} P_j^{(m)} \cong P_j^{(m)} P_i^{(n)} \text{ for all } i,j \in I_\G.
$$
The isomorphism is given by the 2-morphism
$$
\begin{tikzpicture}[>=stealth]
\draw (0,0) rectangle (1,.5);
\draw (.5,.25) node {$j^m$};
\draw (0,1.5) rectangle (1,2);
\draw (.5,1.75) node {$i^n$};
\draw (.5,.5) -- (2.5,1.5) [->][very thick];

\draw (2,0) rectangle (3,.5);
\draw (2.5,.25) node {$i^n$};
\draw(2,1.5) rectangle (3,2);
\draw (2.5,1.75) node {$j^m$};
\draw (2.5,.5) -- (.5,1.5) [->][very thick];
\end{tikzpicture}
$$
with inverse given by
$$
\begin{tikzpicture}[>=stealth]
\draw (0,0) rectangle (1,.5);
\draw (.5,.25) node {$i^n$};
\draw (0,1.5) rectangle (1,2);
\draw (.5,1.75) node {$j^m$};
\draw (.5,.5) -- (2.5,1.5) [->][very thick];

\draw (2,0) rectangle (3,.5);
\draw (2.5,.25) node {$j^m$};
\draw(2,1.5) rectangle (3,2);
\draw (2.5,1.75) node {$i^n$};
\draw (2.5,.5) -- (.5,1.5) [->][very thick];
\end{tikzpicture}
$$
Since we can pull strands apart using the left hand relation in (\ref{eq:rel1'}) it is easy to see that these two 2-morphisms are inverse of each other. The proof that
$$ Q_i^{(n)}Q_j^{(m)} \cong Q_j^{(m)} Q_i^{(n)} \text{ for all } i,j \in I_\G $$
is the same except that all strands now point downwards. The relation $Q_i^{(n)}P_j^{(m)} \cong P_j^{(m)}Q_i^{(m)}$ when $\la i,j \ra = -1$ follows via the same computation using relation (\ref{eq:rel3'}).

Next we show the more interesting relation
$$ Q_i^{(n)} P_i^{(m)} \cong \oplus_{k \ge 0} P_i^{(m-k)}Q_i^{(n-k)} \otimes H^\star(\P^k) \text{ for all } i \in I_\G.
$$
To define maps $Q_i^{(n)} P_i^{(m)} \rightarrow P_i^{(m-k)}Q_i^{(n-k)}$ consider the 2-morphisms
$$
\begin{tikzpicture}[>=stealth]
\draw (0,0) rectangle (1,.5);
\draw (.5,.25) node {$i^n$};
\draw (0,2) rectangle (1,2.5);
\draw (.5,2.25) node {$i^{m-k}$};
\draw (.25,.5) -- (2.25,2) [<-][very thick];
\draw (2.25,.5) arc (0:180:.75cm and .5cm)[->][very thick];
\filldraw [blue](1.5,1) circle (2pt);
\draw (1.5,1) node [anchor=north] {$l$};
\draw (2,0) rectangle (3,.5);
\draw (2.5,.25) node {$i^{m}$};
\draw (2,2) rectangle (3,2.5);
\draw (2.5,2.25) node {$i^{n-k}$};
\draw (2.75,.5) -- (.75,2) [->][very thick];
\end{tikzpicture}
$$
where $0 \le l \le k$. In this picture $k$ of the strands coming out from the bottom are involved in the $k$ caps while the other strands are involved in crossings. The $l$ tells us that we put a single $ii$ dot on the first $l$ of the $k$ caps (because we have the idempotents it does not matter on which $l$ strands we place the dots). 

Taking the direct sum as $l$ ranges from $0$ to $k$, defines a 2-morphism from 
$$Q_i^{(n)} P_i^{(m)} \rightarrow P_i^{(m-k)} Q_i^{(n-k)} \otimes H^\star(\P^k).$$  
Taking the direct sum over $k$ we obtain a 2-morphism
$$ f: Q_i^{(n)} P_i^{(m)} \longrightarrow \oplus_{k \ge 0} P_i^{(m-k)} Q_i^{(n-k)} \otimes H^\star(\P^k). $$
We claim this morphism is invertible and we explicitly construct its inverse as a linear combinations of diagrams of the form
$$
\begin{tikzpicture}[>=stealth]
\draw (0,0) rectangle (1,.5);
\draw (.5,.25) node {$i^{m-k'}$};
\draw (0,2) rectangle (1,2.5);
\draw (.5,2.25) node {$i^{n}$};
\draw (.75,.5) -- (2.75,2) [->][very thick];
\draw (.75,2) arc (180:360:.75cm and .5cm)[->][very thick];
\draw (1.5,1.5) node [anchor=south] {$l'$};
\filldraw [blue](1.5,1.5) circle (2pt);
\draw (2,0) rectangle (3,.5);
\draw (2.5,.25) node {$i^{n-k'}$};
\draw (2,2) rectangle (3,2.5);
\draw (2.5,2.25) node {$i^{m}$};
\draw (2.25,.5) -- (.25,2) [<-][very thick];
\end{tikzpicture}
$$
where we put $l'$ dots on $l'$ of the $k'$ cup-like strands.

To do this we first compute the following graphical relation in $\End(Q_i^{(n)}P_i^{(m)})$:
\begin{equation}\label{eq:main}
\begin{tikzpicture}[>=stealth]
\draw  (-1.5,2.25) node {$\sum_{0 \le l \le k} c_{m,n}^{k,l}$};
\draw (0,0) rectangle (1,.5);
\draw (.5,.25) node {$i^n$};
\draw (0,2) rectangle (1,2.5);
\draw (.5,2.25) node {$i^{m-k}$};
\draw (.25,.5) -- (2.25,2) [<-][very thick];
\draw (2.25,.5) arc (0:180:.75cm and .5cm)[->][very thick];
\filldraw [blue](1.5,1) circle (2pt);
\draw (1.5,1) node [anchor=north] {$l$};
\draw (2,0) rectangle (3,.5);
\draw (2.5,.25) node {$i^{m}$};
\draw (2,2) rectangle (3,2.5);
\draw (2.5,2.25) node {$i^{n-k}$};
\draw (2.75,.5) -- (.75,2) [->][very thick];
\draw [shift={+(0,2)}](0,2) rectangle (1,2.5);
\draw [shift={+(0,2)}](.5,2.25) node {$i^{n}$};
\draw  [shift={+(0,2)}](.75,.5) -- (2.75,2) [->][very thick];
\draw  [shift={+(0,2)}](.75,2) arc (180:360:.75cm and .5cm)[->][very thick];
\draw  [shift={+(0,2)}](1.5,1.5) node [anchor=south] {$k-l$};
\filldraw  [shift={+(0,2)}][blue](1.5,1.5) circle (2pt);
\draw [shift={+(0,2)}] (2,2) rectangle (3,2.5);
\draw  [shift={+(0,2)}](2.5,2.25) node {$i^{m}$};
\draw  [shift={+(0,2)}](2.25,.5) -- (.25,2) [<-][very thick];

\draw  (3.5,2.25) node {$=$};

\draw (4,0) rectangle (5,.5);
\draw (4.5,.25) node {$i^n$};
\draw (4,4) rectangle (5,4.5);
\draw (4.5,4.25) node {$i^{n}$};
\draw (6,0) rectangle (7,.5);
\draw (6.5,.25) node {$i^m$};
\draw (6,4) rectangle (7,4.5);
\draw (6.5,4.25) node {$i^{m}$};
\draw (4.5,.5) -- (4.5,4) [<-][very thick];
\draw (6.5,.5) -- (6.5,4) [->][very thick];

\end{tikzpicture}
\end{equation}
where $ c_{m,n}^{k,l} = k! \binom{m}{k} \binom{n}{k} \binom{k}{l}.$

To see where this relation comes from consider the $k=0$ term in the left hand side (it has no cups, caps, or dots in it).  Expanding the idempotents  
$\begin{tikzpicture}[>=stealth]
\draw (0,0) rectangle (1,.5);
\draw (.5,.25) node {$i^n$};
\end{tikzpicture}
$
and 
$\begin{tikzpicture}[>=stealth]
\draw (0,0) rectangle (1,.5);
\draw (.5,.25) node {$i^m$};
\end{tikzpicture}
$
explicitly as a sum of permutations we rewrite the $k=0$ term as a linear combination of diagrams with many crossings. Then crossings involving two upward pointing strands or two downward pointing strands can be absorbed into the idempotents at the top or bottom of the diagram (notice that this is a consequence of using the idempotent $e_{triv}$, which satisfies $\sigma e_{triv} = e_{triv} \sigma = e_{triv}$ for any permutation $\sigma$). Thus we write the $k=0$ term as a diagram with no idempotents in the middle:
 $$
\begin{tikzpicture}[>=stealth]
\draw (0,0) rectangle (1,.5);
\draw (.5,.25) node {$i^n$};
\draw (0,2) rectangle (1,2.5);
\draw (.5,2.25) node {$i^{m}$};
\draw (.5,.5) -- (2.5,2) [<-][very thick];
\draw (2,0) rectangle (3,.5);
\draw (2.5,.25) node {$i^{m}$};
\draw (2,2) rectangle (3,2.5);
\draw (2.5,2.25) node {$i^{n}$};
\draw (2.5,.5) -- (.5,2) [->][very thick];

\draw  [shift={+(0,2)}](.5,.5) -- (2.5,2) [->][very thick];
\draw  [shift={+(0,2)}](0,2) rectangle (1,2.5);
\draw  [shift={+(0,2)}](.5,2.25) node {$i^{n}$};
\draw  [shift={+(0,2)}](2,2) rectangle (3,2.5);
\draw  [shift={+(0,2)}](2.5,2.25) node {$i^{m}$};
\draw  [shift={+(0,2)}](2.5,.5) -- (.5,2) [<-][very thick];

\draw  (3.5,2.25) node {$=$};
\draw (4,0) rectangle (5,.5);
\draw (4.5,.25) node {$i^n$};
\draw (4,4) rectangle (5,4.5);
\draw (4.5,4.25) node {$i^{n}$};
\draw (6,0) rectangle (7,.5);
\draw (6.5,.25) node {$i^m$};
\draw (6,4) rectangle (7,4.5);
\draw (6.5,4.25) node {$i^{m}$};

\draw (6.5,.5) .. controls (4.5,2) .. (6.5,4)[->][very thick] ;
\draw (4.5,.5) .. controls (6.5,2) .. (4.5,4)[<-] [very thick];
\draw (6.25,.5) .. controls (4.25,2) .. (6.25,4)[->][very thick] ;
\draw (4.25,.5) .. controls (6.25,2) .. (4.25,4)[<-] [very thick];
\draw (6.75,.5) .. controls (4.75,2) .. (6.75,4)[->][very thick] ;
\draw (4.75,.5) .. controls (6.75,2) .. (4.75,4)[<-] [very thick];

\end{tikzpicture}
$$ 
In both sides of the above picture there are $n$ strands emanating from an idempotent 
$\begin{tikzpicture}[>=stealth]
\draw (0,0) rectangle (1,.5);
\draw (.5,.25) node {$i^n$};
\end{tikzpicture}
$
and in the right hand side we have drawn more than one of these strands to emphasize the fact that there are many crossings. 

More generally we use the relation
$$
\begin{tikzpicture}[>=stealth]
\draw (0,0) rectangle (1,.5);
\draw (.5,.25) node {$i^n$};
\draw (0,2) rectangle (1,2.5);
\draw (.5,2.25) node {$i^{m-k}$};
\draw (.25,.5) -- (2.25,2) [<-][very thick];
\draw (2.25,.5) arc (0:180:.75cm and .5cm)[->][very thick];
\filldraw [blue](1.5,1) circle (2pt);
\draw (1.5,1) node [anchor=north] {$l$};
\draw (2,0) rectangle (3,.5);
\draw (2.5,.25) node {$i^{m}$};
\draw (2,2) rectangle (3,2.5);
\draw (2.5,2.25) node {$i^{n-k}$};
\draw (2.75,.5) -- (.75,2) [->][very thick];
\draw [shift={+(0,2)}](0,2) rectangle (1,2.5);
\draw [shift={+(0,2)}](.5,2.25) node {$i^{n}$};
\draw  [shift={+(0,2)}](.75,.5) -- (2.75,2) [->][very thick];
\draw  [shift={+(0,2)}](.75,2) arc (180:360:.75cm and .5cm)[->][very thick];
\draw  [shift={+(0,2)}](1.5,1.5) node [anchor=south] {$k-l$};
\filldraw  [shift={+(0,2)}][blue](1.5,1.5) circle (2pt);
\draw [shift={+(0,2)}] (2,2) rectangle (3,2.5);
\draw  [shift={+(0,2)}](2.5,2.25) node {$i^{m}$};
\draw  [shift={+(0,2)}](2.25,.5) -- (.25,2) [<-][very thick];

\draw  (4.0,2.25) node {$=$};
\draw  [shift={+(1,0)}](4,0) rectangle (5,.5);
\draw  [shift={+(1,0)}](4.5,.25) node {$i^n$};
\draw  [shift={+(1,0)}](4,4) rectangle (5,4.5);
\draw  [shift={+(1,0)}](4.5,4.25) node {$i^{n}$};
\draw  [shift={+(1,0)}](6,0) rectangle (7,.5);
\draw  [shift={+(1,0)}](6.5,.25) node {$i^m$};
\draw  [shift={+(1,0)}](6,4) rectangle (7,4.5);
\draw  [shift={+(1,0)}](6.5,4.25) node {$i^{m}$};

\draw  [shift={+(1,0)}](6.25,.5) arc (0:180:.75cm and .5cm)[->][very thick];
\filldraw [shift={+(1,0)}] [blue](5.5,1) circle (2pt);
\draw  [shift={+(1,0)}] (5.5,1) node [anchor=north] {$l$};

\draw  [shift={+(1,2)}](4.75,2) arc (180:360:.75cm and .5cm)[->][very thick];
\draw  [shift={+(1,2)}](5.5,1.5) node [anchor=south] {$k-l$};
\filldraw  [shift={+(1,2)}][blue](5.5,1.5) circle (2pt);

\draw  [shift={+(1,0)}](6.75,.5) .. controls (4.75,2) .. (6.75,4)[->][very thick] ;
\draw  [shift={+(1,0)}](4.25,.5) .. controls (6.25,2) .. (4.25,4)[<-] [very thick];

\end{tikzpicture}
$$
to write the entire left hand side of Equation (\ref{eq:main}) as a linear combination of diagrams without up-up or down-down crossings.

Now we start pulling apart the inner double crossings in each term one at a time using the relation (\ref{eq:rel4'}). At every stage we absorb newly created upward (or downward) pointing crossings in to the top (or bottom) idempotents. The result is a linear combination of diagrams of the form

$$
\begin{tikzpicture}[>=stealth]

\draw (0,0) rectangle (1,.5);
\draw (.5,.25) node {$i^n$};
\draw (2.5,.25) node {$i^m$};
\draw (2.25,.5) arc (0:180:.75cm and .5cm)[->][very thick];
\filldraw [blue](1.5,1) circle (2pt);
\draw (1.5,1) node [anchor=north] {$l$};
\draw (2,0) rectangle (3,.5);

\draw [shift={+(0,2)}](0,2) rectangle (1,2.5);

\draw  [shift={+(0,2)}](.75,2) arc (180:360:.75cm and .5cm)[->][very thick];
\draw  [shift={+(0,2)}](1.5,1.5) node [anchor=south] {$k-l$};
\filldraw  [shift={+(0,2)}][blue](1.5,1.5) circle (2pt);
\draw [shift={+(0,2)}] (2,2) rectangle (3,2.5);
\draw  [shift={+(0,2)}](2.5,2.25) node {$i^{m}$};
\draw  [shift={+(0,2)}](.5,2.25) node {$i^{n}$};
\draw (2.7,.5) --(2.7,4)[->][very thick] ;
\draw (.3,.5) --(.3,4)[<-] [very thick];

\end{tikzpicture}
$$

Keeping track of the constants carefully one arrives at Equation (\ref{eq:main}). The particular values of the constants $c_{m,n}^{k,l}$ are not really important apart from the fact that they are non-zero.  The key observation which follows from Equation (\ref{eq:main}) is that the identity 2-morphism of $Q_i^{(n)}P_i^{(m)}$ factors through $P_i^{(m-k)}Q_i^{(n-k)} \otimes H^\star(\P^k)$ as a composition of 2-morphisms 
$$ \id: Q_i^{(n)}P_i^{(m)} \xrightarrow{f} P_i^{(m-k)}Q_i^{(n-k)} \otimes H^\star(\P^k) \xrightarrow{g} Q_i^{(n)}P_i^{(m)} $$
where
$$
\begin{tikzpicture}[>=stealth]
\draw  (-2.5,.25) node {$f= $};
\draw  (-1.5,.25) node {$\sum_{0 \le l \le k}$};
\draw (0,-1) rectangle (1,-.5);
\draw (.5,-.75) node {$i^n$};
\draw (0,1) rectangle (1,1.5);
\draw (.5,1.25) node {$i^{m-k}$};
\draw (.25,-.5) -- (2.25,1) [<-][very thick];
\draw (2.25,-.5) arc (0:180:.75cm and .5cm)[->][very thick];
\filldraw [blue](1.5,0) circle (2pt);
\draw (1.5,0) node [anchor=north] {$l$};
\draw (2,-1) rectangle (3,-.5);
\draw (2.5,-.75) node {$i^{m}$};
\draw (2,1) rectangle (3,1.5);
\draw (2.5,1.25) node {$i^{n-k}$};
\draw (2.75,-.5) -- (.75,1) [->][very thick];
\end{tikzpicture}
$$
and 
$$
\begin{tikzpicture}[>=stealth]
\draw  (-3.0,3) node {$g=$};
\draw  (-1.5,3) node {$\sum_{0 \le l \le k} c_{m,n}^{k,l} $};
\draw (2,2) rectangle (3,2.5);
\draw (2.5,2.25) node {$i^{n-k}$};
\draw (0,2) rectangle (1,2.5);
\draw (.5,2.25) node {$i^{m-k}$};
\draw [shift={+(0,2)}](0,2) rectangle (1,2.5);
\draw [shift={+(0,2)}](.5,2.25) node {$i^{n}$};
\draw  [shift={+(0,2)}](.75,.5) -- (2.75,2) [->][very thick];
\draw  [shift={+(0,2)}](.75,2) arc (180:360:.75cm and .5cm)[->][very thick];
\draw  [shift={+(0,2)}](1.5,1.5) node [anchor=south] {$k-l$};
\filldraw  [shift={+(0,2)}][blue](1.5,1.5) circle (2pt);
\draw [shift={+(0,2)}] (2,2) rectangle (3,2.5);
\draw  [shift={+(0,2)}](2.5,2.25) node {$i^{m}$};
\draw  [shift={+(0,2)}](2.25,.5) -- (.25,2) [<-][very thick];
\end{tikzpicture}
$$

In addition, a similar graphical manipulation shows that
\begin{equation}\label{eq:main2}
\begin{tikzpicture}[>=stealth]
\draw (0,0) rectangle (1,.5);
\draw (.5,.25) node {$i^{m-k'}$};
\draw (2,0) rectangle (3,.5);
\draw (2.5,.25) node {$i^{n-k'}$};

\draw [shift={+(0,2)}](0,0) rectangle (1,.5);
\draw [shift={+(0,2)}](.5,.25) node {$i^n$};
\draw [shift={+(0,2)}](0,2) rectangle (1,2.5);
\draw [shift={+(0,2)}](.5,2.25) node {$i^{m-k}$};
\draw [shift={+(0,2)}](.25,.5) -- (2.25,2) [<-][very thick];
\draw [shift={+(0,2)}](2.25,.5) arc (0:180:.75cm and .5cm)[->][very thick];
\filldraw [shift={+(0,2)}][blue](1.5,1) circle (2pt);
\draw [shift={+(0,2)}](1.5,1) node [anchor=north] {$l$};
\draw [shift={+(0,2)}](2,0) rectangle (3,.5);
\draw [shift={+(0,2)}](2.5,.25) node {$i^{m}$};
\draw [shift={+(0,2)}](2,2) rectangle (3,2.5);
\draw [shift={+(0,2)}](2.5,2.25) node {$i^{n-k}$};
\draw [shift={+(0,2)}](2.75,.5) -- (.75,2) [->][very thick];

\draw (.75,.5) -- (2.75,2) [->][very thick];
\draw  (.75,2) arc (180:360:.75cm and .5cm)[->][very thick];
\draw  (1.5,1.5) node [anchor=south] {$k'-l'$};
\filldraw  [blue](1.5,1.5) circle (2pt);
\draw  (2.25,.5) -- (.25,2) [<-][very thick];
\draw  (3.5,2.25) node {$=$};

\draw  (5,2.25) node {$\frac{\delta_{k,k'}\delta_{l,l'}}{c_{k,l}^{m,n}}$};
\draw (6,0) rectangle (7,.5);
\draw (6.5,.25) node {$i^{m-k}$};
\draw (6,4) rectangle (7,4.5);
\draw (6.5,4.25) node {$i^{m-k}$};
\draw (8,0) rectangle (9,.5);
\draw (8.5,.25) node {$i^{n-k}$};
\draw (8,4) rectangle (9,4.5);
\draw (8.5,4.25) node {$i^{n-k}$};
\draw (6.5,.5) -- (6.5,4) [->][very thick];
\draw (8.5,.5) -- (8.5,4) [<-][very thick];

\end{tikzpicture}
\end{equation}
The fact that the left hand side is zero when $l \neq l'$ follows from the fact that when the middle idempotents are expanded as a linear combination of permutations, each term contains either a left twist curl (which is zero) or a strand with two $ii$ dots on it (which is also zero).

It follows from Equation (\ref{eq:main2}) that the identity 2-morphism of $\oplus_{k \ge 0} P_i^{(m-k)} Q_i^{(n-k)} \otimes H^\star(\P^k)$ factors as
$$\id : \oplus_{k \ge 0} P_i^{(m-k)}Q_i^{(n-k)} \otimes H^\star(\P^k) \xrightarrow{g} Q_i^{(n)} P_i^{(m)} \xrightarrow{f} \oplus_{k \ge 0} P_i^{(m-k)}Q_i^{(n-k)} \otimes H^\star(\P^k),
$$
while the identity 2-morphism of $Q_i^{(n)} P_i^{(m)}$ factors as $f\circ g$.
Thus $f$ and $g$ are inverse isomorphisms, as desired.

The proof that $Q_j^{(n)} P_i^{(m)} \cong P_i^{(m)} Q_j^{(n)} \oplus P_i^{(m-1)} Q_j^{(n-1)}$ when $\la i, j \ra = -1$ is similar to the proof above. In particular, the isomorphism $f'$ from the left hand side to the right hand side is given by
$$
\begin{tikzpicture}[>=stealth]
\draw  (-1,.25) node {$f'= $};
\draw (0,-1) rectangle (1,-.5);
\draw (.5,-.75) node {$i^n$};
\draw (0,1) rectangle (1,1.5);
\draw (.5,1.25) node {$j^{m}$};
\draw (.25,-.5) -- (2.25,1) [<-][very thick];
\draw (2,-1) rectangle (3,-.5);
\draw (2.5,-.75) node {$j^{m}$};
\draw (2,1) rectangle (3,1.5);
\draw (2.5,1.25) node {$i^{n}$};
\draw (2.75,-.5) -- (.75,1) [->][very thick];

\draw  (3.5,.25) node {$+$};

\draw [shift={+(4,0)}] (0,-1) rectangle (1,-.5);
\draw [shift={+(4,0)}] (.5,-.75) node {$i^n$};
\draw [shift={+(4,0)}] (0,1) rectangle (1,1.5);
\draw [shift={+(4,0)}] (.5,1.25) node {$j^{m-1}$};
\draw [shift={+(4,0)}] (.25,-.5) -- (2.25,1) [<-][very thick];
\draw [shift={+(4,0)}] (2.25,-.5) arc (0:180:.75cm and .5cm)[->][very thick];
\filldraw [shift={+(4,0)}] [blue](1.5,0) circle (2pt);
\draw [shift={+(4,0)}] (2,-1) rectangle (3,-.5);
\draw [shift={+(4,0)}] (2.5,-.75) node {$j^{m}$};
\draw [shift={+(4,0)}] (2,1) rectangle (3,1.5);
\draw[shift={+(4,0)}]  (2.5,1.25) node {$i^{n-1}$};
\draw [shift={+(4,0)}] (2.75,-.5) -- (.75,1) [->][very thick];
\end{tikzpicture}.
$$
Note that there are no terms of the form 
$$
\begin{tikzpicture}[>=stealth]
\draw (0,-1) rectangle (1,-.5);
\draw (.5,-.75) node {$i^n$};
\draw (0,1) rectangle (1,1.5);
\draw (.5,1.25) node {$j^{m-k}$};
\draw (.25,-.5) -- (2.25,1) [<-][very thick];
\draw (2.25,-.5) arc (0:180:.75cm and .5cm)[->][very thick];
\filldraw [blue](1.5,0) circle (2pt);
\draw (2,-1) rectangle (3,-.5);
\draw (2.5,-.75) node {$j^{m}$};
\draw (2,1) rectangle (3,1.5);
\draw (2.5,1.25) node {$i^{n-k}$};
\draw (2.75,-.5) -- (.75,1) [->][very thick];
\end{tikzpicture}
$$
for $k > 1$ (this is in contrast to the isomorphism in the corresponding relation for $i=j$).  This difference between the $i=j$ and $\la i,j \ra = -1$ case comes from the fact that $ij$ dots anticommute.  This anticommutation of dots implies that the above diagram is equal to zero when $k>1$: grow a crossing between two adjacent strands from the $j^m$ symmetrizer, move the two degree one dots on those strands past each other (picking up a sign), and then absorb the crossing back into the bottom symmetrizer:
\begin{equation}\label{eq:zero}
\begin{tikzpicture}[>=stealth]
\draw (0,-1) rectangle (1,-.5);
\draw (.5,-.75) node {$i^n$};
\draw (0,1) rectangle (1,1.5);
\draw (.5,1.25) node {$j^{m}$};
\draw (.25,-.5) -- (.25,1) [->][very thick];
\draw (.75,-.5) -- (.75,1) [->][very thick];
\filldraw [blue](.25,0) circle (2pt);
\filldraw [blue](.75,.5) circle (2pt);

\draw (1.5,.33) node {$=$};

\draw [shift={+(2,0)}] (0,-1) rectangle (1,-.5);
\draw [shift={+(2,0)}] (.5,-.75) node {$i^n$};
\draw [shift={+(2,0)}] (0,1) rectangle (1,1.5);
\draw [shift={+(2,0)}] (.5,1.25) node {$j^{m}$};
\draw [shift={+(2,0)}] (.25,-.5) -- (.75,1) [->][very thick];
\draw [shift={+(2,0)}] (.75,-.5) -- (.25,1) [->][very thick];
\filldraw [shift={+(2,0)}] [blue](.35,-.2) circle (2pt);
\filldraw [shift={+(2,0)}] [blue](.6,0) circle (2pt);

\draw (3.5,.33) node {$=$};

\draw [shift={+(4,0)}] (0,-1) rectangle (1,-.5);
\draw [shift={+(4,0)}] (.5,-.75) node {$i^n$};
\draw [shift={+(4,0)}] (0,1) rectangle (1,1.5);
\draw [shift={+(4,0)}] (.5,1.25) node {$j^{m}$};
\draw [shift={+(4,0)}] (.25,-.5) -- (.75,1) [->][very thick];
\draw [shift={+(4,0)}] (.75,-.5) -- (.25,1) [->][very thick];
\filldraw [shift={+(4,0)}] [blue](.35,.7) circle (2pt);
\filldraw [shift={+(4,0)}] [blue](.6,.5) circle (2pt);

\draw (5.5,.33) node {$=$};

\draw  [shift={+(6,0)}] (0,-1) rectangle (1,-.5);
\draw  [shift={+(6,0)}] (.5,-.75) node {$i^n$};
\draw   [shift={+(6,0)}] (0,1) rectangle (1,1.5);
\draw  [shift={+(6,0)}] (.5,1.25) node {$j^{m}$};
\draw  [shift={+(6,0)}] (.25,-.5) -- (.25,1) [->][very thick];
\draw  [shift={+(6,0)}] (.75,-.5) -- (.75,1) [->][very thick];
\filldraw [shift={+(6,0)}]  [blue](.75,0) circle (2pt);
\filldraw [shift={+(6,0)}]  [blue](.25,.5) circle (2pt);

\draw (7.5,.33) node {$= -$};

\draw  [shift={+(8,0)}] (0,-1) rectangle (1,-.5);
\draw  [shift={+(8,0)}] (.5,-.75) node {$i^n$};
\draw   [shift={+(8,0)}] (0,1) rectangle (1,1.5);
\draw  [shift={+(8,0)}] (.5,1.25) node {$j^{m}$};
\draw  [shift={+(8,0)}] (.25,-.5) -- (.25,1) [->][very thick];
\draw  [shift={+(8,0)}] (.75,-.5) -- (.75,1) [->][very thick];
\filldraw [shift={+(8,0)}]  [blue](.25,0) circle (2pt);
\filldraw [shift={+(8,0)}]  [blue](.75,.5) circle (2pt);

\end{tikzpicture}
\end{equation}
Thus one cannot have more than one dot connecting the idempotents $i^n$ and $j^m$.  
$\square$

\subsection{Further relations among 1-morphisms}

\begin{prop}
For $i,j \in I_\G$ we have isomorphisms in $\H^\G$:
\begin{enumerate}
\item $P_i^{(m)}$ and $P_j^{(1^n)}$ commute while $Q_i^{(m)}$ and $Q_j^{(1^n)}$ also commute
\item $Q_i^{(1^n)} P_i^{(m)} \cong P_i^{(m)} Q_i^{(1^n)} \oplus P_i^{(m-1)} Q_i^{(1^{n-1})} \otimes_\k H^\star(\mathbb{P}^1) \oplus P_i^{(m-2)} Q_i^{(1^{n-2})}$.
\item $Q_j^{(1^n)} P_i^{(m)} \cong \oplus_{k \ge 0} P_i^{(m-k)} Q_j^{(1^{n-k})}$ if $\la i,j \ra = -1$.
\item $Q_j^{(1^n)} P_i^{(m)} \cong P_i^{(m)} Q_j^{(1^n)}$ if $\la i,j \ra = 0$.
\end{enumerate}
\end{prop}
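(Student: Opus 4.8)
The plan is to treat the four statements in increasing order of difficulty, modelling everything on the proof of Proposition \ref{prop:basicrel2} in Section \ref{sec:proofprop2}, but keeping careful track of the extra signs produced by the antisymmetrizer that now sits on the $Q$-side (or $P$-side) of the $(1^n)$-idempotents.

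Parts (1) and (4) are the easy ones. The commutation isomorphisms $P_i^{(m)}P_j^{(1^n)}\cong P_j^{(1^n)}P_i^{(m)}$ and $Q_i^{(m)}Q_j^{(1^n)}\cong Q_j^{(1^n)}Q_i^{(m)}$ are obtained exactly as the commutation $P_i^{(n)}P_j^{(m)}\cong P_j^{(m)}P_i^{(n)}$ of Section \ref{sec:proofprop2}: the 2-morphism crossing the whole (anti)symmetrized block over the other one, and its reverse, are mutually inverse because the upward strands can be pulled apart using the left relation in (\ref{eq:rel1'}); when $i=j$ the only change is that the induced upward crossings are absorbed into the (anti)symmetrizer possibly with a sign, which does not affect the conclusion. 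Statement (4), $Q_j^{(1^n)}P_i^{(m)}\cong P_i^{(m)}Q_j^{(1^n)}$ for $\la i,j\ra=0$, is likewise immediate: since $\epsilon_{ij}=0$ the correction term in (\ref{eq:rel3'}) vanishes, a $P_i$ strand slides freely past a $Q_j$ strand, and the two block-crossing 2-morphisms are again inverse to one another.

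The content is in (2) and (3), which I would prove by the same ``explicit $f$ and $g$, then (\ref{eq:main}) and (\ref{eq:main2})'' strategy used for the $i=j$ and $\la i,j\ra=-1$ cases of Proposition \ref{prop:basicrel2}: $f$ is a sum of diagrams in which $k$ of the bottom $Q$-strands are joined by decorated arcs to $k$ of the bottom $P$-strands while the remaining strands pass straight through, $g$ is the upside-down family with matching normalizing constants $c^{k,l}_{m,n}$, one checks $g\circ f=\id$ through the analogue of (\ref{eq:main}) (pulling apart the inner double crossings with (\ref{eq:rel4'}) and reabsorbing the new upward/downward crossings into the idempotents) and $f\circ g=\id$ through the analogue of (\ref{eq:main2}). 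The new and principal obstacle is to determine which decorated arc configurations survive once one symmetrizer is replaced by the antisymmetrizer $e_{sgn}$, i.e. to compute the multiplicity spaces on the right-hand side.

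For (3), an arc joining a $P_i$-end to a $Q_j$-end must carry exactly one $ij$-dot (there is no dotless morphism $P_i\to P_j$), so the $k$-th summand is a priori $P_i^{(m-k)}Q_j^{(1^{n-k})}$ with multiplicity one; the question is for which $k$ it is actually nonzero. In the purely symmetric case of Proposition \ref{prop:basicrel2} the computation (\ref{eq:zero}) shows such a configuration vanishes for $k\ge 2$: growing a crossing out of the $j$-symmetrizer, anticommuting the two degree-one $ij$-dots past one another, and reabsorbing the crossing produces $D=-D$. Here the $j$-side carries $e_{sgn}$, so reabsorbing the crossing contributes an extra sign $-1$ which cancels the anticommutation sign; hence $D=D$, there is no obstruction, all $k\ge 0$ genuinely contribute, and $Q_j^{(1^n)}P_i^{(m)}\cong\bigoplus_{k\ge 0}P_i^{(m-k)}Q_j^{(1^{n-k})}$, matching the transposed relation in $\h_\G$. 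For (2), an arc joining a $P_i$-end to a $Q_i$-end carries either no dot or a single $ii$-dot (a second one is an $\omega^2=0$ term), so there are only two types of arc; and if two arcs have the same decoration, transposing the two $Q$-strands feeding them introduces a sign via $e_{sgn}$ while leaving the diagram otherwise unchanged (the resulting transposition of $P$-ends is absorbed by the symmetrizer), forcing it to vanish by a short planar-isotopy argument. Hence at most two arcs are allowed, and the $k$-th summand has multiplicity $1$ for $k=0$, the two-dimensional $H^\star(\P^1)$ for $k=1$ (one arc, dotted or not, in degrees $\mp1$), $1$ for $k=2$ (one dotted and one dotless arc, in degree $0$), and $0$ for $k\ge 3$; this gives $Q_i^{(1^n)}P_i^{(m)}\cong P_i^{(m)}Q_i^{(1^n)}\oplus P_i^{(m-1)}Q_i^{(1^{n-1})}\otimes_\k H^\star(\P^1)\oplus P_i^{(m-2)}Q_i^{(1^{n-2})}$. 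The remaining bookkeeping — reading off the degree shifts and checking that the collapsed constants $c^{k,l}_{m,n}$ are nonzero — is routine, exactly as in Section \ref{sec:proofprop2}, and the isomorphisms transport to $\H_\G$ through the functor $\eta$ as before.
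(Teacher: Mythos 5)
Your proposal is correct and follows essentially the same route the paper intends: it transplants the $f$/$g$ factorization strategy of Section \ref{sec:proofprop2} (the analogues of (\ref{eq:main}) and (\ref{eq:main2})), and your key observation — that absorbing a crossing into $e_{sign}$ contributes a $-1$ which cancels the anticommutation of the two degree-one dots, so the vanishing argument of (\ref{eq:zero}) fails and all $k\ge 0$ survive in case (3), while in case (2) two identically decorated arcs between $e_{triv,i}$ and $e_{sign,i}$ force $D=-D=0$, capping the decomposition at $k=2$ with multiplicities $1$, $H^\star(\P^1)$, $1$ — is exactly the point the paper singles out in its remark after the proposition (the rest of the proof being omitted there as analogous to Section \ref{sec:proofprop2}). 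No gaps beyond the routine constant/nonvanishing bookkeeping you already flag, which is at the same level of detail as the paper itself.
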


The graphical proofs of this statement are analogous to and no more difficult than the proofs of Section \ref{sec:proofprop2}; we have omitted the details in the interest of space.  We do point out, however, the fact that there are many summands on the right hand side of the relations
$$
	Q_j^{(1^n)} P_i^{(m)} \cong \oplus_{k \ge 0} P_i^{(m-k)} Q_j^{(1^{n-k})}, \ \ \ \la i,j \ra = -1,
$$
in contrast to the decomposition of $Q_j^{(n)} P_i^{(m)}$ (which has only two summands).  This is because a trivial idempotent $e_{triv,i}$ and a sign idempotent labeled $e_{sign,j}$ can have more than one dot between them, thus there are non-trivial 2-morphisms from $P_i^{(m-k)} Q_j^{(1^{n-k})}$ to $Q_j^{(1^n)} P_i^{(m)}$.  In particular, the diagrammatic computation analogous to that of Equation \ref{eq:zero} does show anything when one of the idempotents is a sign idempotent and the other is a trivial idempotent; since $s_i e_{sign} = e_{sign} s_i = -e_{sign}$, absorbing a crossing into the sign idempotent introduces a $-1$.

\subsection{The functor $\Psi$}

We define a covariant autoequivalence $\Psi': {\H'}^\G \longrightarrow {\H'}^\G$ as follows. $\Psi'$ is the identity on objects and on 1-morphisms and also is the identity on cups, caps, and dots. On the other hand, $\Psi'$ acts as multiplication by $-1$ on any crossing between two adjacent strands. This map induces a covariant autoequivalence $\Psi$ on $\H^\G$ whose square is the identity.

Since a crossing of two upward pointing strands is multiplied by $-1$, it follows that $\Psi$ takes the idempotent 2-morphism $e_{triv}$ to the idempotent 2-morphism $e_{sign}$ (and vice versa). Thus
$$ \Psi(P_i^{(n)}) = P_i^{(1^n)}, \ \ \Psi(P_i^{(1^n)}) = P_i^{(n)}, \ \ \Psi(Q_i^{(n)}) = Q_i^{(1^n)}, \ \ \Psi(Q_i^{(1^n)}) = Q_i^{(n)}.
$$
It follows from the existence of the automorphism $\Psi$ that decompositions between products of $P_i^{(1^n)}$s and $Q_i^{(1^n)}$s have the same form as decompositions between products of
$P_i^{(n)}$s and $Q_i^{(n)}$s.  The autoequivalence $\Psi$ descends in the Grothendieck group to the automorphism $\psi$ of Section \ref{sec:psi}.

\section{Proof of Theorem \ref{thm:main1}}\label{sec:K-theory}

In this section we study the sequence of maps 
$$ \h_\G \xrightarrow{\pi} K_0(\H^\G) \xrightarrow{K_0(\eta)} K_0(\H_\G) \rightarrow \End(\oplus_{n\geq0} K_0(\mathcal{C}_n^\G)) \simeq \End(\mathcal{F}) $$
where $\mathcal{F}$ is isomorphic to the Fock space representation of $\h_\G$. We will show that both $\pi$ and $K_0(\eta)$ are isomorphisms,  thus proving Proposition \ref{prop:Kgroups} and Theorem \ref{thm:main1}.  In particular, this shows that both $\H_\G$ and $\H^\G$ categorify $\h_\G$. 

We begin with some preliminary results. 

\begin{lemma}\label{lem:1} 
There are no negative degree endomorphisms of $\prod_i P_i^{m_i} \prod_i Q_i^{n_i}$ while the algebra of degree zero endomorphisms is isomorphic to $\otimes_i \k[S_{m_i}] \otimes_i \k[S_{n_i}]$. 
\end{lemma}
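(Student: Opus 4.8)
The plan is to work in the 2-category $\H^\G$ (equivalently, by the Morita equivalence, in $\H_\G$) and compute the space of 2-morphisms $\Hom_{\H^\G}(R, R)$ for $R = \prod_i P_i^{m_i} \prod_i Q_i^{n_i}$ by analyzing the planar diagrams that represent them. First I would reduce to the case where all the $P$'s come before all the $Q$'s: using the adjunction (straightening) relations and the pitchfork relations (which follow from \eqref{eq:rel1'}--\eqref{eq:rel4'}), any endomorphism diagram can be rewritten as a linear combination of diagrams, and one then argues that endomorphisms of $\prod_i P_i^{m_i}\prod_i Q_i^{n_i}$ that involve cups/caps between a $P_i$ and a $Q_j$ strand necessarily raise degree, so in non-positive degree only ``braid-like'' diagrams (no local maxima or minima) survive, together with diagrams where a $Q$-strand caps off with another $Q$-strand — but since $R$ has each $Q_i$ appearing $n_i$ times with matching orientations at top and bottom, closed components can only contribute via clockwise curls (hollow dots, degree $2$) or circles, both of which have positive degree and hence cannot appear in a degree $\le 0$ diagram except trivially.

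The heart of the argument is then the braid-like case. A braid-like endomorphism of $R$ is, by the relations $T_k^2 = 1$, the symmetric group relations, and the dot-slide relations, described by an element of the algebra generated by crossings permuting same-colored strands among themselves (crossings between differently-colored strands can be resolved via \eqref{eq:rel1'} into same-colored data, or moved to the boundary) together with solid dots on strands. A solid dot on an $i$-colored strand has degree $2$ (the $ii$-dot) or, for an $ij$-dot with $\la i,j\ra = -1$, degree $1$ but it changes the color of the strand, so it cannot appear in an endomorphism of a fixed 1-morphism $R$ without being paired with another color-changing dot, pushing the total degree up. Hence in degree $0$ no dots appear at all, and in negative degree nothing appears: the only contributions are from the purely permutational (degree $0$) part. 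This identifies the degree-$\le 0$ part of $\End(R)$ with the image of $\bigotimes_i \k[S_{m_i}] \otimes \bigotimes_i \k[S_{n_i}]$ (acting by permuting the like-colored $P$-strands, and separately the like-colored $Q$-strands), which is concentrated in degree $0$; this simultaneously shows there are no negative-degree endomorphisms and that the degree-zero part is a quotient of $\bigotimes_i \k[S_{m_i}]\otimes\bigotimes_i \k[S_{n_i}]$.

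To finish, I would show this map from $\bigotimes_i \k[S_{m_i}]\otimes\bigotimes_i\k[S_{n_i}]$ to $\End^0(R)$ is injective, so it is an isomorphism. The cleanest route is to invoke the 2-representation constructed in Sections \ref{sec:action}--\ref{sec:action2}: the functors $\fP_i, \fQ_i$ act on $\oplus_n D(A_n^\G\dgmod)$, and the natural transformation corresponding to a permutation $\sigma \in S_{m_i}$ acts, on a suitable module, as the genuine permutation of tensor factors in $A_{m_i}^\G$-type modules, which is faithful because $\k[S_{m_i}] \hookrightarrow A_{m_i}^\G$ is injective. (Alternatively one can use the non-derived action of Section \ref{sec:koszul}, or the faithfulness of $B_n^\G \to \End_{\H'_\G}(P^n)$ referenced in Remark \ref{rem:injective}, combined with Lemma \ref{lem:P} to track colors.)

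The main obstacle I anticipate is the first reduction step: carefully justifying that \emph{every} endomorphism diagram of $R$ can be brought to a form with no $P_i$–$Q_j$ cups/caps below a certain degree, i.e. a clean ``no turnbacks in low degree'' lemma. This requires organizing the isotopy and pitchfork relations to push all local maxima and minima either to the far left/right (where they close off into curls and circles, necessarily of positive degree) or to cancel against each other via the adjunction relations, while keeping careful track of the degree bookkeeping — exactly the kind of diagrammatic normal-form argument that is conceptually routine but needs to be stated precisely. Everything after that reduction is a degree count plus the faithfulness input from the 2-representation.
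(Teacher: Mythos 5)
Your proposal is correct and follows essentially the same route as the paper: a degree count on planar diagrams showing that in degrees $\le 0$ only braid-like pictures survive, together with the identification of the braid-like part with $\otimes_i \k[S_{m_i}] \otimes_i \k[S_{n_i}]$. The reduction step you flag as the main obstacle is handled in the paper by one direct observation rather than a normal-form lemma -- the only negative-degree generators are the left-oriented cap and right-oriented cup, and since all $P$'s sit to the left of all $Q$'s any such cap or cup must lie in a left-twist curl (which kills the diagram) or in a counterclockwise circle (nonzero only when it carries a degree-two dot, hence of nonnegative total degree) -- while your appeal to the 2-representation for injectivity of the map from $\otimes_i \k[S_{m_i}] \otimes_i \k[S_{n_i}]$ onto the degree-zero endomorphisms is a point the paper's proof leaves implicit (cf. Remark \ref{rem:injective}).
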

\begin{proof}
Any negative degree endomorphism must contain a left-oriented cap or right oriented cup (since these are the only generating 2-morphisms of negative degree).  Such a cup or cap is either part of a left-twist curl (which makes the entire picture equal to zero) or part of a counter-clockwise circle. However, by the defining relations, the only non-zero counter-clockwise circles are those of non-negative degree (i.e. those carrying dots of degree two). Thus any negative degree endomorphism must be zero. 

Note that this argument would not apply to endomorphisms of a 1-morphism which contains a $P$ to the right of a $Q$.  For example, a left-oriented cap followed by a right oriented cap is a non-zero degree $-2$ endomorphism of $Q_iP_i$.

By the same argument any degree zero diagram describing an endomorphism of $\prod_i P_i^{m_i} \prod_i Q_i^{n_i}$ cannot have any right-twist curls or dots thus must be isotopic to a sum of braid-like diagrams (i.e. diagrams with no local minima or maxima and no dots) connecting a $P_i$ to a $P_i$ or a $Q_i$ to $Q_i$. The result now follows.  
\end{proof}

\begin{prop}\label{prop:indec} Any 1-morphism in $\H^\G$ decomposes uniquely into a finite direct sum of indecomposables. The indecomposable 1-morphisms in $\H^\G$ are of the form $\prod_i P_i^{\lambda_i} \prod_i Q_i^{\mu_i}$ for some partitions $\{\lambda_i, \mu_i\}$. 
\end{prop}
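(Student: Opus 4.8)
The plan is to reduce an arbitrary $1$-morphism of $\H^\G$ to a direct sum of ``standard'' $1$-morphisms $\prod_i P_i^{m_i}\prod_i Q_i^{n_i}$, then to split each standard $1$-morphism along the primitive idempotents of its degree-zero endomorphism algebra, and finally to read off indecomposability from Lemma~\ref{lem:1}, with uniqueness coming from Krull--Schmidt. First I would reduce to standard form: an arbitrary $1$-morphism of ${\H'}^\G$ is a word in the letters $P_i,Q_j$, and using Proposition~\ref{prop:basicrel} together with relation~\eqref{eq:rel3'} (which gives $Q_jP_i\cong P_iQ_j$ when $\la i,j\ra=0$), every occurrence of $Q_j$ immediately to the left of $P_i$ can be rewritten, at the cost of producing strictly shorter words as extra direct summands when $i=j$ or $\la i,j\ra=-1$. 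A double induction --- outer on the length of the word, inner on the number of pairs in which a $Q$ precedes a $P$ --- shows that this rewriting terminates and that every $1$-morphism of $\H^\G$ is isomorphic to a \emph{finite} direct sum of words in which all $P$'s precede all $Q$'s; since the crossing $2$-morphisms of \eqref{eq:rel1'} give genuine isomorphisms $P_iP_j\cong P_jP_i$ and $Q_iQ_j\cong Q_jQ_i$, the $P$-block and the $Q$-block of each such word may then be sorted by vertex. Hence every $1$-morphism of $\H^\G$ is isomorphic to a finite direct sum of grading shifts of $1$-morphisms of the form $\prod_i P_i^{m_i}\prod_i Q_i^{n_i}$.

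Next I would invoke Lemma~\ref{lem:1}: the degree-zero endomorphism algebra of $\prod_i P_i^{m_i}\prod_i Q_i^{n_i}$ is $\bigotimes_i\k[S_{m_i}]\otimes\bigotimes_i\k[S_{n_i}]$, and there are no endomorphisms of negative degree. Since $\mathrm{char}\,\k=0$ this algebra is a product of matrix algebras, with one block $M_{f^{\lambda}}(\k)$ for each partition $\lambda$; choosing primitive idempotents $e_{\lambda}$ as in Section~\ref{sec:graphical2}, the products $\bigotimes_i e_{\lambda_i}\otimes\bigotimes_i e_{\mu_i}$ form, up to conjugation, a complete set of orthogonal primitive idempotents, and the summand of $\prod_i P_i^{m_i}\prod_i Q_i^{n_i}$ cut out by such an idempotent is precisely $\prod_i P_i^{\lambda_i}\prod_i Q_i^{\mu_i}$ (recalling that $\prod_i P_i^{\lambda_i}\prod_i Q_i^{\mu_i}$ is by definition the object $(\prod_i P_i^{m_i}\prod_i Q_i^{n_i},\,\bigotimes_i e_{\lambda_i}\otimes\bigotimes_i e_{\mu_i})$ of the Karoubi envelope). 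This yields
\[
\prod_i P_i^{m_i}\prod_i Q_i^{n_i}\;\cong\;\bigoplus_{\lambda_i\vdash m_i,\ \mu_i\vdash n_i}\Bigl(\prod_i P_i^{\lambda_i}\prod_i Q_i^{\mu_i}\Bigr)^{\oplus\,\prod_i f^{\lambda_i}\prod_i f^{\mu_i}},
\]
so every $1$-morphism is a finite direct sum of $1$-morphisms of the claimed form.

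Finally I would prove that $X:=\prod_i P_i^{\lambda_i}\prod_i Q_i^{\mu_i}$ is indecomposable. Writing $X=(Y,e)$ with $Y=\prod_i P_i^{m_i}\prod_i Q_i^{n_i}$ and $e$ the idempotent above, one has $\End_{\H^\G}(X)=e\,\End_{\H^\G}(Y)\,e$; its degree-zero part is $e\bigl(\bigotimes_i\k[S_{m_i}]\otimes\bigotimes_i\k[S_{n_i}]\bigr)e=\k$ because $e$ is primitive in a product of matrix algebras, and it vanishes in negative degrees by Lemma~\ref{lem:1}. A homogeneous decomposition $\phi=\phi_0+\phi_{>0}$ of an idempotent $\phi\in\End_{\H^\G}(X)$ forces $\phi_0\in\k$, hence $\phi_0\in\{0,1\}$, and a degreewise comparison in $\phi^2=\phi$ then forces $\phi=0$ or $\phi=1$; so $X$ has no nontrivial idempotent endomorphisms and is indecomposable. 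Moreover the positive-degree part of $\End_{\H^\G}(X)$ is a nil ideal (dots on a strand square to zero and the relevant diagram spaces are finite-dimensional), so $\End_{\H^\G}(X)$ is a local ring, and the Krull--Schmidt theorem gives uniqueness of the decomposition. I expect the step requiring real care to be the first one: one must check that moving the $Q$'s to the right of the $P$'s terminates and introduces only finitely many summands, which is exactly what the double induction on word length and inversion number is designed to guarantee; the rest is a packaging of Lemma~\ref{lem:1} and Proposition~\ref{prop:basicrel}.
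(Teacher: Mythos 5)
Your proof is correct and follows essentially the same route as the paper: reduce to normal-ordered words $\prod_i P_i^{m_i}\prod_i Q_i^{n_i}$ using Proposition \ref{prop:basicrel} and the commutation isomorphisms, split along minimal idempotents of the degree-zero endomorphism algebra identified in Lemma \ref{lem:1}, and deduce indecomposability and Krull--Schmidt uniqueness from the absence of negative-degree endomorphisms together with finite-dimensionality in each degree. The only inessential blemish is the parenthetical claim that the positive-degree part of $\End_{\H^\G}(X)$ is a nil ideal (elements of $\End_{\H^\G}(\id)$ such as dotted clockwise circles act on $\End_{\H^\G}(X)$ and are not expected to be nilpotent), but this claim is not needed: your degreewise comparison of homogeneous components of an idempotent, or equivalently the paper's appeal to $\End^0=\k$ in degree zero, already gives the conclusion.
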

\begin{proof}
Lemma \ref{lem:1} implies that the space of endomorphisms (of a fixed degree) of any 1-morphism is finite dimensional. This means that $\H^\G$ satisfies the Krull-Schmidt property and hence any 1-morphism decomposes uniquely into a finite direct sum of indecomposables. 

Now any 1-morphism in $\H^\G$ is a direct summand of a sequence of $P_i$'s and $Q_i$'s. Using relations from Proposition \ref{prop:basicrel} (which were shown to hold for $\H^\G$ in Section \ref{sec:proofprop2}) it follows that any 1-morphism is a direct sum of (shifts of) 1-morphisms of the form $\prod_i P_i^{\lambda_i} \prod_i Q_i^{\mu_i}$. It remains to show that such endomorphisms are indecomposable. 

But this follows from Lemma \ref{lem:1} since $\prod_i P_i^{\lambda_i} \prod_i Q_i^{\mu_i}$ is a direct summand of $\prod_i P_i^{|\lambda_i|} \prod_i Q_i^{|\mu_i|}$ corresponding to the minimal idempotent $\prod_i e_{\lambda_i,1} \prod_i e_{\mu_i,1} \in \otimes_i \k[S_{|\lambda_i|}] \otimes_i \k[S_{|\mu_i|}]$. 
\end{proof}

\begin{Remark}\label{rem:1} 
Arguing as above it is straightforward to check that the 1-morphisms $P^\lambda Q^\mu$ for all $I_\G$-colored partitions $\lambda$ and $\mu$ form a complete set of indecomposable 1-morphisms in $\H_\G$.  The classes of these indecomposable 1-morphisms in the Grothendieck group thus form a ``canonical basis'' of the Heisenberg algebra. This basis is analogous in many respects to the canonical bases of Kashiwara and Lusztig in the representation theory of quantum groups.
\end{Remark}

\begin{lemma}\label{lem:decomp} 
In $\H^\G$ we have $P_i^\lambda \cdot P_i^\mu \cong \oplus_\nu (P_i^\nu)^{\oplus c_{\lambda,\mu}^\nu}$ where $c_{\lambda,\mu}^\nu$ is the Littlewood-Richardson coefficient. In particular, we have 
$$P_i^{(n)} P_i^{(m)} \cong \oplus_{k=0}^{\mbox{min}(n,m)} P_i^{(n+m-k,k)} \text{ and } 
P_i^{(m)} P_j^{(1^n)} \cong P_i^{(m,1^n)} \oplus P_i^{(m+1,1^{n-1})}.$$
\end{lemma}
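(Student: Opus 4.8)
The plan is to deduce the lemma from the fact, established in Proposition \ref{prop:indec}, that the indecomposable 1-morphisms of $\H^\G$ are exactly the $\prod_i P_i^{\lambda_i}\prod_i Q_i^{\mu_i}$, together with the computation of $\Hom$-spaces in Lemma \ref{lem:1}. The key observation is that $P_i^\lambda\cdot P_i^\mu$ is by construction the summand of $P_i^{|\lambda|+|\mu|}$ cut out by the idempotent $e_\lambda\otimes e_\mu\in \k[S_{|\lambda|}]\otimes\k[S_{|\mu|}]\subset \k[S_{|\lambda|+|\mu|}]$, so its endomorphism algebra (in degree zero, the only degree in which it is nonzero by Lemma \ref{lem:1}) is the corner algebra $(e_\lambda\otimes e_\mu)\,\k[S_{|\lambda|+|\mu|}]\,(e_\lambda\otimes e_\mu)$. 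On the other hand, decomposing this 1-morphism into indecomposables and using that the indecomposable summands must be of the form $P_i^\nu$ with $|\nu|=|\lambda|+|\mu|$ (there can be no $Q$'s since the whole expression is a composite of $P_i$'s, and the idempotent lies in $\k[S_n]$), the multiplicity of $P_i^\nu$ is the dimension of $\Hom_{\H^\G}(P_i^\nu, P_i^\lambda P_i^\mu)$, which equals the multiplicity of the Specht module $S^\nu$ in $\mathrm{Ind}_{S_{|\lambda|}\times S_{|\mu|}}^{S_{|\lambda|+|\mu|}}(S^\lambda\boxtimes S^\mu)$.

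So the steps, in order, are: (1) Use Proposition \ref{prop:indec} and the surrounding remarks to conclude $P_i^\lambda P_i^\mu\cong\bigoplus_\nu (P_i^\nu)^{\oplus m_{\lambda\mu}^\nu}$ for some nonnegative integers $m_{\lambda\mu}^\nu$, where $\nu$ ranges over partitions of $n:=|\lambda|+|\mu|$ — here one needs that no $Q_i^{\kappa}$ or mixed term appears, which follows because the composite is literally a sequence of $P_i$'s with an idempotent from $\k[S_n]$ applied, and because by Krull--Schmidt the decomposition is the one coming from refining that idempotent. (2) Identify $m_{\lambda\mu}^\nu=\dim\Hom_{\H^\G}(P_i^\nu,P_i^\lambda P_i^\mu)$ using that the $P_i^\nu$ are pairwise non-isomorphic indecomposables with no negative-degree and only scalar degree-zero endomorphisms between distinct ones (again Lemma \ref{lem:1}). (3) Compute this $\Hom$-space: since $P_i^\lambda P_i^\mu=(P_i^n, e_\lambda\otimes e_\mu)$ and $P_i^\nu=(P_i^n,e_\nu)$, and $\End_{\H^\G}(P_i^n)$ in degree zero is $\k[S_n]$ by Lemma \ref{lem:1}, we get $\Hom(P_i^\nu,P_i^\lambda P_i^\mu)=(e_\lambda\otimes e_\mu)\,\k[S_n]\,e_\nu$, whose dimension is the multiplicity of $S^\nu$ in $\mathrm{Ind}_{S_{|\lambda|}\times S_{|\mu|}}^{S_n}(S^\lambda\boxtimes S^\mu)$. (4) Invoke the classical Littlewood--Richardson rule, which states precisely that this induction multiplicity is $c_{\lambda\mu}^\nu$. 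This gives the general formula. (5) Specialize: for $\lambda=(n)$, $\mu=(m)$ the relevant induction is $\mathrm{Ind}(\mathbf{1}\boxtimes\mathbf{1})$ of trivial modules, and Pieri's rule (the $c^\nu_{(n),(m)}$ case) gives $c^\nu_{(n)(m)}=1$ exactly for $\nu=(n+m-k,k)$ with $0\le k\le\min(n,m)$ and $0$ otherwise; for $\lambda=(m)$, $\mu=(1^n)$ the dual Pieri rule gives $c^\nu_{(m),(1^n)}=1$ for $\nu=(m,1^n)$ and $\nu=(m+1,1^{n-1})$ and $0$ otherwise.

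The only genuine content beyond bookkeeping is step (3)--(4), i.e.\ recognizing that the combinatorics of decomposing $P_i^\lambda P_i^\mu$ is governed by the representation theory of the symmetric group via the embedding $\k[S_{|\lambda|}]\otimes\k[S_{|\mu|}]\hookrightarrow\k[S_n]$, and then citing the Littlewood--Richardson rule. This is standard: the category of $P_i$'s with these symmetric-group idempotents is equivalent (on the relevant $\Hom$-spaces) to the category of polynomial functors / Schur functors, where $P_i^\lambda$ corresponds to the Schur functor $\mathbb{S}_\lambda$ and $\mathbb{S}_\lambda\otimes\mathbb{S}_\mu\cong\bigoplus_\nu\mathbb{S}_\nu^{\oplus c_{\lambda\mu}^\nu}$ is the classical plethysm-free statement. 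I expect the main (very mild) obstacle is simply making rigorous in step (1) that $P_i^\lambda P_i^\mu$ has no summands involving $Q$'s and no summands $P_i^\kappa$ with $|\kappa|\neq n$; this is immediate once one notes the composite is $(P_i^n, e_\lambda\otimes e_\mu)$ and applies Krull--Schmidt (Proposition \ref{prop:indec}) together with the observation that every idempotent in $\End_{\H^\G}(P_i^n)$ of degree zero lies in $\k[S_n]$, so every summand is of the form $(P_i^n,e')$ for an idempotent $e'\in\k[S_n]$ conjugate to some $e_\nu$.
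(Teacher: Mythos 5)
Your proposal is correct and follows essentially the same route as the paper: both arguments reduce the decomposition to the classical Littlewood--Richardson rule for inducing $V_\lambda\boxtimes V_\mu$ from $S_{|\lambda|}\times S_{|\mu|}$ to $S_{|\lambda|+|\mu|}$, transported through the embedding $\k[S_{|\lambda|}]\otimes\k[S_{|\mu|}]\subset \k[S_{|\lambda|+|\mu|}]\hookrightarrow \End_{\H^\G}(P_i^{|\lambda|+|\mu|})$ provided by Lemma \ref{lem:1}. The only difference is bookkeeping: the paper decomposes the idempotent $e_\lambda e_\mu$ directly into minimal orthogonal idempotents of $\k[S_{|\lambda|+|\mu|}]$, whereas you count multiplicities via Krull--Schmidt and $\Hom$-space dimensions with Frobenius reciprocity, which amounts to the same thing.
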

\begin{proof}
The statement follows from the representation theory of symmetric groups. Let $V_\lambda$ and $V_\mu$ be irreducible representations of $S_n$ and $S_m$, respectively. Then $V_\lambda \boxtimes V_\mu$ is an irreducible representation of $S_n\times S_m$.  This representation, when induced to $S_{n+m}$ decomposes as a direct sum of irreducible representations $V_\nu$, and the multiplicity of $V_\nu$ in this decomposition is the Littlewood-Richardson coefficient $c_{\lambda,\mu}^\nu$.  

This fact about representations of symmetric groups can be rephrased completely in terms of idempotents. If $e_\lambda \in\k[S_n]$, $e_\mu \in\k[S_m]$ and $e_{\nu} \in \k[S_{n+m}]$ are idempotents such that $V_\lambda \cong \k[S_n] e_\lambda$, $V_\mu \cong \k[S_m]e_\mu$, and $V_\nu \cong \k[S_{m+n}]e_\nu$, then when we have 
$$ \k[S_{n+m}] e_\lambda e_\mu \cong \oplus_s \k[S_{n+m}]e_s $$
for some minimal idempotents $\{e_s\}$ in $\k[S_{n+m}]$. The number of such $e_s$ such that $\k[S_{n+m}]e_s \cong \k[S_{n+m}] e_\nu$ is the Littlewood-Richardson coefficient $c_{\lambda,\mu}^\nu$. Under the embeddings
$$ \k[S_n]\otimes_\k\k[S_m] \subset \k[S_{n+m}] \hookrightarrow \End_{\H^\G}(P_i^{n+m})$$ 
this decomposition of idempotents implies the decomposition in the Lemma.
\end{proof}

\begin{cor}\label{cor:generate}
The classes $\{[P_i^{(m)}],[Q_j^{(n)}]: i,j \in I_\G, \ n,m \ge 0 \}$ generate $K_0(\H^\G)$ as an algebra.
\end{cor}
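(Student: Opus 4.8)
The plan is to bootstrap from the classification of indecomposables already established. First I would invoke Proposition \ref{prop:indec}: every $1$-morphism of $\H^\G$ is, up to grading shift, a direct sum of indecomposables of the form $\prod_i P_i^{\lambda_i}\prod_i Q_i^{\mu_i}$, so as a $\k[q,q^{-1}]$-module $K_0(\H^\G)$ is spanned by the classes of such $1$-morphisms. Since composition of $1$-morphisms is the multiplication in $K_0(\H^\G)$ and direct sums pass to sums of classes, the class $[\prod_i P_i^{\lambda_i}\prod_i Q_i^{\mu_i}]$ is literally the product $\prod_i[P_i^{\lambda_i}]\cdot\prod_i[Q_i^{\mu_i}]$ in $K_0(\H^\G)$. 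So the corollary reduces to showing that each $[P_i^{\lambda}]$ lies in the subalgebra generated by $\{[P_i^{(m)}]\}_{m\ge 0}$, and that each $[Q_j^{\mu}]$ lies in the subalgebra generated by $\{[Q_j^{(n)}]\}_{n\ge 0}$.

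For the $P$-side I would use Lemma \ref{lem:decomp}. Iterating the isomorphism $P_i^\lambda\cdot P_i^\mu\cong\oplus_\nu (P_i^\nu)^{\oplus c_{\lambda,\mu}^\nu}$ shows that for a partition $\lambda=(\lambda_1\ge\cdots\ge\lambda_\ell)$ one has $P_i^{(\lambda_1)}\cdots P_i^{(\lambda_\ell)}\cong\oplus_\mu (P_i^\mu)^{\oplus K_{\mu\lambda}}$, where $K_{\mu\lambda}$ is the Kostka number; this is just the symmetric-function identity $h_\lambda=\sum_\mu K_{\mu\lambda}s_\mu$ transported through the correspondence $[P_i^{(n)}]\leftrightarrow h_n$, $[P_i^\nu]\leftrightarrow s_\nu$ furnished by Lemma \ref{lem:decomp} together with $s_\lambda s_\mu=\sum_\nu c_{\lambda,\mu}^\nu s_\nu$. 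Passing to $K_0$ gives $[P_i^{(\lambda_1)}]\cdots[P_i^{(\lambda_\ell)}]=\sum_{\mu\trianglerighteq\lambda}K_{\mu\lambda}[P_i^\mu]$ with $K_{\lambda\lambda}=1$ and $K_{\mu\lambda}=0$ unless $\mu$ dominates $\lambda$, so a downward induction on the dominance order expresses $[P_i^\lambda]$ as an integral combination of products of the $[P_i^{(m)}]$. The $Q$-side is identical: the proof of Lemma \ref{lem:decomp} uses only the symmetric-group representation theory of the idempotents inside $\End_{\H^\G}(P_i^n)$, which applies verbatim to $\End_{\H^\G}(Q_i^n)$, so each $[Q_j^\mu]$ lies in the subalgebra generated by the $[Q_j^{(n)}]$.

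I expect no serious obstacle: the statement is essentially a formal consequence of Proposition \ref{prop:indec} and Lemma \ref{lem:decomp}. The one point deserving a little care is the unitriangularity step — one must note that the transition matrix from $\{[P_i^{(\lambda_1)}]\cdots[P_i^{(\lambda_\ell)}]\}$ to $\{[P_i^\lambda]\}$ is invertible over $\Z$, which is precisely the classical unitriangularity of the Kostka matrix in dominance order, and this is what makes the induction terminate. A secondary bookkeeping item is to confirm that the isomorphisms in Lemma \ref{lem:decomp} are genuine degree-zero isomorphisms so that no powers of $q$ enter; this is already built into the statement of that lemma.
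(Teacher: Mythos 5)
Your proposal is correct and follows essentially the same route as the paper: Proposition \ref{prop:indec} reduces the claim to expressing each $[P_i^\lambda]$ (and $[Q_j^\mu]$) in terms of the $[P_i^{(m)}]$'s, and this is done by induction using the decomposition of Lemma \ref{lem:decomp}. The only difference is that you spell out the inductive step via the unitriangularity of the Kostka matrix in dominance order, a detail the paper leaves implicit (and records afterwards via the Giambelli identity in a remark).
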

\begin{proof}
By Proposition \ref{prop:indec} above $K_0(\H^\G)$ is spanned by elements of the form $\prod [P_i^{\lambda_i}] \prod_i [Q_i^{\mu_i}]$. Thus it suffices to show that any $[P_i^\lambda]$ is generated by $\{[P_i^{(m)}]: m \ge 0\}$. This follows by induction using Lemma \ref{lem:decomp}. For instance, 
$$P_i^{2} = P_i^{(2)} \oplus P_i^{(1^2)}$$
so that $P_i^{(1^2)}$ is generated by $P_i$ and $P_i^{(2)}$. 
\end{proof}
\begin{Remark}
The expression of $[P_i^{\lambda}]$ in terms of $[P_i^{(m)}]$'s is given by the Giambelli identity expressing the Schur polynomial in terms of products of elementary symmetric functions.  Explicitly, 
$$ [P_i^{\lambda}] = \mbox{det}_{kl} [P_i^{(\lambda'_k + k-l)}] $$
where $\lambda' = (\lambda'_1\geq \lambda'_2\geq \dots)$ is the transpose of the partition $\lambda$.
\end{Remark}

In order to prove Proposition \ref{prop:Kgroups} and Theorem \ref{thm:main1}, we now consider the sequence 
$$\h_\G \xrightarrow{\pi} K_0(\H^\G) \xrightarrow{K_0(\eta)} K_0(\H_\G) \rightarrow \End(\oplus_{n\geq0} K_0(\mathcal{C}_n^\G)) \simeq \End(\mathcal{F}) $$
and show that both $\pi$ and $K_0(\eta)$ are isomorphisms.

First we check that $\pi$ is injective. This follows since the Fock space representation of $\h_\G$ is faithful, which in turn follows easily from its description as differential operators acting on a polynomial algebra, \cite{FJW1, FJW2}.  Thus the composition of all maps in the above sequence is injective, which implies that $\pi$ is injective.

Next we check that $\pi$ is surjective. Note that by Lemma \ref{cor:generate} $K_0(\H^\G)$ is generated as an algebra by the classes $[P_i^{(m)}]$ and $[Q_j^{(n)}]$. Since $\pi(p_i^{(m)}) = [P_i^{(m)}]$ and $\pi(q_j^{(n)}) = [Q_j^{(n)}]$ these generators are in the image of $\pi$. Thus $\pi$ is surjective.  

Since $\pi$ is surjective and the total composition injective it follows that $K_0(\eta)$ is injective. Finally it is easy to see $K_0(\eta)$ is surjective using Remark \ref{rem:1}. 
\begin{flushright}
$\square$
\end{flushright}

\section{Relation to Hilbert schemes}\label{sec:geometry}

We will now discuss how $\H^\G$ acts on the derived categories of coherent sheaves on Hilbert schemes of points of the corresponding ALE space $X_\G$.

We take $\k = \C$. If $\G \subset SL_2(\C)$ is finite then by the McKay correspondence the derived category of finite dimensional left $\Sym^* V^\vee \rtimes \G$-modules is isomorphic to the derived category $DCoh(X_\G)$ of coherent sheaves on the resolution $X_\G := \widehat{\C^2/\G}$ of the quotient $\C^2/\G$. To simplify notation we will denote the Hilbert scheme $\Hilb^n(X_\G)$ by $X_\G^{[n]}$.

Now by the results of Bridgeland, King and Reid \cite{BKR} the derived category $\catC_n^\G$ of $A_n^\G = (\Sym^* V^\vee \rtimes \G)^{\otimes n} \rtimes S_n$ modules is isomorphic to the derived category of coherent sheaves on the $S_n$-equivariant Hilbert scheme $\Hilb^{S_n}(X_\G^n)$. This Hilbert scheme parametrizes $S_n$-equivariant subschemes of $X_\G^n$ whose global sections are isomorphic to the regular representation of $S_n$. By work of Haiman \cite{H}, $\Hilb^{S_n}(X_\G^n)$ is isomorphic to the usual Hilbert scheme of points $X_\G^{[n]}$.

Hence Theorem \ref{thm:main2} gives a categorical Heisenberg action of $\H^\G$ on $\bigoplus_n DCoh(X_\G^{[n]})$ (if $\G = \C^\times$ then we should think of $\catC_n^\G$ as the derived category of coherent sheaves on the Hilbert scheme of the quotient stack $[\C^2/\C^\times]$). 

We now explain how this action of $\H^\G$ on $\bigoplus_n DCoh(X_\G^{[n]})$ looks like geometrically. This description is to some degree conjectural because we only sketch why the algebraically defined functors $\fP_i$ and $\fQ_i$ from Section \ref{sec:action} induce the geometric functors defined in this section. Checking the relations in the 2-category $\H^\G$ directly from the geometry (without using the algebraic description provided by BKR) is much more difficult. Two such computations are illustrated in subsections \ref{sec:temp1} and \ref{sec:temp2} below. These computations are analogous to those from \cite{CKL1} and \cite{CK} in the setting of categorical $\sl_n$ actions. Because of the technical difficulties in such geometric setups we chose to take a more algebraic approach in this paper. 

\subsection{The functors}

Inside $X_\G^{[n]} \times X_\G^{[n+1]}$ we have the natural subvariety consisting of ideal sheaves $(J_0, J_1)$ where $J_1 \subset J_0$. This subvariety also maps to $X_\G$ by taking the quotient $J_0/J_1$ and subsequently induces a functor $DCoh(X_\G^{[n]} \times X_\G) \rightarrow DCoh(X_\G^{[n+1]})$. Under the correspondence
$$DCoh(X_\G^{[n]}) \longleftrightarrow D(A_n^\G \dmod)$$
this functor is induced by the $(A_{n+1}^\G, A_n^\G \times A_1^\G)$-bimodule $A_{n+1}^\G$ (where we use the usual inclusion of $A_n^\G \otimes A_1^\G$ into $A_{n+1}^\G$).

Now, the fibre of $X_\G \rightarrow \C^2/\G$ over $0 \in \C^2/\G$ is a tree of $\P^1$'s. More precisely, there is one $E_i \cong \P^1$ for each $i \in I_\G$ except for the node which corresponds to the trivial $\G$-representation (we label this node $0$ and the corresponding minimal idempotent $e_0$). Moreover, $E_i$ and $E_j$ intersect at a point if $i,j \in I_\G$ are connected by an edge and do not intersect if $i,j$ are not connected.

Under the McKay correspondence the $A_1^\G$-module $\k[\G]e_i$ corresponds to the sheaf $\O_{E_i}(-1)$ while $\k[\G]e_0$ corresponds to the structure sheaf $\O_{\cup_i E_i}[-1]$ shifted into cohomological degree one. We will use the notation $E_0 := \cup_i E_i$.

Now the functor $\fP_i: \catC_n^\G \rightarrow \catC_{n+1}^\G$ in $\H^\G$ is induced by $A_{n+1}^\G \otimes_{A_n^\G \otimes A_1^\G} (A_n^\G \otimes \k[\G]e_i)$. It should follow that the corresponding functor $\fP_i: X_\G^{[n]} \rightarrow X_\G^{[n+1]}$ is induced by the kernels
$$\O_{P_i(n,n+1)} \otimes \pi^* \O_{\P^1}(-1) \text{ if } i \ne 0 \text{ and } \O_{P_0(n,n+1)}[-1] \text{ if } i = 0.$$
Here
$$P_i(n,n+1) := \{(J_0, J_1): \supp(J_0/J_1) \in E_i\} \subset X_\G^{[n]} \times X_\G^{[n+1]}$$
and $\pi: P_i(n,n+1) \rightarrow E_i$ is the natural map which remembers the support point $\supp(J_0/J_1)$. The functors $\fQ_i: X_\G^{[n+1]} \rightarrow X_\G^{[n]}$ are given by the adjoint of $\fP_i$ (up to a shift).

The difference between the left and right adjoints of $\fP_i$ is tensoring by the line bundle
\begin{equation}\label{eq:temp}
\pi_1^* \omega_{X_\G^{[n]}} [\dim X_\G^{[n]}] \otimes \pi_2^* \omega_{X_\G^{[n+1]}}^\vee [-\dim X_\G^{[n+1]}].
\end{equation}
Since $X_\G$ is holomorphic symplectic it follows that $X_\G^{[n]}$ is also symplectic and hence has trivial canonical bundle. Thus (\ref{eq:temp}) is the trivial line bundle with a shift of $[-2]$ (in other words $\fP_i^L \cong \fP_i^R [-2]$).

\subsubsection{The symmetric group}\label{sec:temp1}
To see the action of the symmetric group $S_k \subset \End(\fP_i^k)$ we must first compute the convolution product of the kernels $\O_{P_i(n,n+1)} \otimes \pi^* \O_{\P^1}(-1)$. The line bundle $\pi^* \O_{\P^1}(-1)$ plays no role so, for simplicity, we will omit it. The convolution $\O_{P_i(n+k-1,n+k)} * \dots * \O_{P_i(n,n+1)} \in DCoh(X_\G^{[n]} \times X_\G^{[n+k]})$ is given by
\begin{equation}\label{eq:temp2}
\pi_{0,k*}(\pi_{k-1,k}^* \O_{P_i(n+k-1,n+k)} \otimes \dots \otimes \pi_{0,1}^* \O_{P_i(n,n+1)})
\end{equation}
where $\pi_{i,j}$ is the projection $X_\G^{[n+k]} \times \dots \times X_\G^{[n]} \rightarrow X_\G^{[n+i]} \times X_\G^{[n+j]}$. One can check that the intersections of $\pi^{-1}_{l,l+1} P_i(n+l,n+l+1)$ for $l=0, \dots, k-1$ are of the expected dimension so (\ref{eq:temp2}) is isomorphic to $\pi_{0,k*}(\O_{\cap_l \pi^{-1}_{l,l+1} P_i(n+l,n+l+1)})$. Now
$$\cap_l \pi^{-1}_{l,l+1} P_i(n+l,n+l+1) = \{J_0 \supset J_1 \supset \dots \supset J_k: \supp(J_l/J_{l+1}) \in E_i \} \subset X_\G^{[n]} \times \dots \times X_\G^{[n+k]}.$$
The map $\pi_{0,k}$ forgets $J_1, \dots, J_{k-1}$. Generically, $\{\supp(J_l/J_{l+1}): l=0, \dots, k-1\}$ are $k$ distinct points on $E_i$. Thus $\pi_{0,k}$ is generically a cover of degree $k!$ and there is a natural action of $S_k$ which permutes the points. This action should extend to all of $\cap_l \pi^{-1}_{l,l+1} P_i(n+l,n+l+1)$ and induce an action of $S_k$ on the pushforward in (\ref{eq:temp2}).

\subsubsection{The commutator relation}\label{sec:temp2}
It is also possible to geometrically see the commutator relation $\fQ_i \fP_i \cong \fP_i \fQ_i \oplus \id[1] \oplus \id [-1]$. To do this let us compute the convolution
$$\O_{P_i(n,n+1)} * \O_{P_i(n,n+1)} \in DCoh(X_\G^{[n]} \times X_\G^{[n]})$$
which is the kernel corresponding to the composition $\fQ_i \fP_i$ (again we ignore line bundles for convenience). This is equal to 
$$\pi_{13}(\pi_{23}^* \O_{P_i(n,n+1)} \otimes \pi_{12}^* \O_{P_i(n,n+1)}) \cong \pi_{13*}(\O_{\pi_{23}^{-1}(P_i(n,n+1))} \otimes \O_{\pi_{12}^{-1}(P_i(n,n+1))}).$$
Now
$$\pi_{23}^{-1}P_i(n,n+1) \cap \pi_{12}^{-1}P_i(n,n+1) = \{J_0 \supset J_1 \subset J_0': \supp(J_0/J_1, J'_0/J_1) \in E_i\} \subset X_\G^{[n]} \times X_\G^{[n+1]} \times X_\G^{[n]}$$
has two components. One of them, denoted $\Delta_0$, is where $J_0 = J_0'$ while the other, denoted $\Delta_1$, is the closure of the locus where $J_0 \ne J_0'$. It is not hard to see that $\dim(\Delta_1) = 2n$ while $\dim(\Delta_0) = 2n+1$ which means that the intersection is of the expected dimension along $\Delta_1$ but not $\Delta_0$. The pushforward via $\pi_{13}$ is generically one-to-one along $\Delta_1$ and is a $\P^1$ fibration along $\Delta_0$. 

Notice that $\pi_{13}(\Delta_0)$ is the diagonal $\Delta \subset X_\G^{[n]} \times X_\G^{[n]}$. Now one should show that $\Delta_0$ contributes $\O_\Delta[1] \oplus \O_\Delta[-1]$ via the pushforward. The computation here is a more involved version of the calculation $\Ext^k_{X_\G}(\O_{E_i}, \O_{E_i}) \cong \begin{cases} \C \text{ if } k=0,2 \\ 0 \text{ otherwise } \end{cases}$ occuring on the surface $X_\G$ (which uses that the normal bundle of $E_i \cong \P^1 \subset X_\G$ is $\O_{\P^1}(-2)$). 

On the other hand, $\Delta_1$ contributes just $\O_{\pi_{13}(\Delta_1)}$. Finally, one can check via a direct convolution computation (which is simpler than the one above) that $\O_{\pi_{13}(\Delta_1)}$ is actually equal to the kernel which induces $\fP_i \fQ_i$. Since $\O_\Delta$ induces the identity functor the commutation relation follows. 

\subsection{Nakajima's action on cohomology}

The fundamental class of the variety $P_i(n,n+1)$ induces one of Nakajima's Heisenberg operators on the cohomology of Hilbert schemes. The other Nakajima operators are given by fundamental classes of the varieties
$$ P_i(n,n+k) = \{(J_0, J_1): \supp(J_0/J_1) \text{ is supported at a single point of } E_i\} \subset   X_\G^{[n]} \times X_\G^{[n+k]}.$$
Nakajima's theorem \cite{N1} is that fundamental classes of the $P_i(n,n+k)$ (and their Poincar\'e duals) satisfy the defining relations of the ``standard'' Heisenberg generators $a_i(k)$ from section \ref{sec:hei}. However, when $k>1$ the varieties $P_i(n,n+k)$ are singular, a fact which perhaps accounts for some of the difficulty of lifting the Nakajima action on cohomology to K-theory. In our categorical approach, the Heisenberg generators which are categorified are not the standard generators $a_i(k)$ but rather the generators $p_i^{(k)}$ which are categorified by the direct summands $\fP_i^{(k)}$ of $\fP_i^k$. In other words, $\fP_i^{(k)}$ is the image of $e_{triv} \in S_k \subset \End(\fP_i^k)$ where $e_{triv}$ is the idempotent inside $\C[S_k]$ corresponding to the trivial partition. 

We do not know a direct, geometric description of the kernel which induces $\fP_i^{(k)}$. The only geometry we can see is that involving $\fP_i^k$ and the action of $S_k$ on it. One can probably guess the support of the kernel for $\fP_i^{(k)}$ but it is going to be more complicated than just $P_i(n,n+k)$. 

The difficulty in explicitly identifying $\fP_i^{(k)}$ is probably related to the fact that it seems very hard to categorify the standard generators $a_i(k)$. By this we mean two things. First, we do not know if $\O_{P_i(n,n+k)}$ induce an action of the standard generators $a_i(k)$ on derived categories of coherent sheaves (we suspect that they do not). Secondly, we do not know categorical constructions of the expressions in section \ref{sec:h_Gdef} relating the generators $p_i^{(k)}$ and $a_i(k)$. Ideally, one would like some complex of functors involving $\fP_i^{(k)}$ which give categorify $a_i(k)$. 

\begin{Remark} There is a graded version of this whole story, given by the equivalence
$$D(A_n^\G \dgmod) \longleftrightarrow DCoh^{\C^\times}(X_\G^{[n]})$$
where the right side is the derived category of $\C^\times$ equivariant sheaves. Here the action of $\C^\times$ on $X_\G^{[n]}$ is induced from the diagonal action of $\C^\times$ on $\C^2$.
\end{Remark}

\section{An abelian 2-representation of $\H_\G$}\label{sec:koszul}

We now describe another 2-representation of $\H_\G$ which is closely related to the 2-representation of section \ref{sec:action} on $\oplus_n D(A_n^\G \dgmod)$. More precisely we replace $A^\G_n$ with its Koszul dual 
$$B_n^\G := (B^\G \otimes \dots \otimes B^\G) \rtimes S_n.$$ 
(The algebra $B_n^\G$ can also be written in smash product notation as $({B^\G})^{\otimes n}\# \k[S_n]$.)  The algebras $B_n^\G$ are superalgebras, and it turns out that the Heisenberg category $\H_\G$ acts on the module category $\oplus_n B_n^\G \dgmod$. Moreover, this action is non-derived and is therefore, in some sense, simpler than the action from section \ref{sec:action}. However, we have emphasized the action in \ref{sec:action} because of its closer relationship to geometry.

After recalling some basic definitions of superalgebras and modules over them, we will define all the generating 1 and 2-morphisms.  The verification of relations, which is straightforward, will be left to the reader.

\subsection{Superalgebras and supermodules}
Let $B$ be a superalgebra.  A left module over $B$ is a supervector space $W$ which is a left $B$ module in the usual sense.  A right supermodule is a supervector space which is a right $B$ module in the usual (non-super) sense, after ignoring the $\Z_2$ gradings on $B$ and on $W$.

Given two superalgebras $B_1$ and $B_2$, we form the tensor product superalgebra $B_1\otimes B_2$ with multiplication
$$
	(a\otimes b)(a'\otimes b') = (-1)^{|b||a'|}(aa'\otimes bb').
$$
There is an isomorphism of superalgebras
$$
	B_1\otimes B_2 \cong B_2\otimes B_1, \ \ a\otimes b \mapsto (-1)^{|a||b|}b\otimes a.
$$

If $V$ and $W$ are left supermodules for $B_1$ and $B_2$ respectively, then $W_1\otimes W_2$ is a left supermodule for the superalgebra $B_1\otimes B_2$.  The action of $B_1\otimes B_2$ on 
$W_1\otimes W_2$ is
$$
	a\otimes b \cdot (v\otimes w) = (-1)^{|b||v|} (a\cdot v)\otimes (b\cdot w).
$$

A morphism of left $B$ supermodules $f: W_1\rightarrow W_2$ is a map of vector spaces such that
$$
	f(a\cdot v) = (-1)^{|f||a|} f(v), \ \ v\in W_1, \ a\in B.
$$
Here $|f|$ is the degree of $f$ considered as a map of super vector spaces.
In contrast, a morphism of right $B$ supermodules $g: W_1\rightarrow W_2$ is a map of vector spaces such that
$$
	g(v\cdot a) = g(v)\cdot a, \ \ v\in W_1, \ a\in B.
$$
Note there is no sign here.

If $f:W_1\rightarrow W_2$ is a morphism of left $B_1$ supermodules and $g:W'_1\rightarrow W'_2$ is a morphism of left $B_2$ supermodules, then we write $f\otimes g$ as the induced morphism of left $B_1\otimes B_2$ modules 
$$ (f \otimes g): W_1\otimes W'_1\longrightarrow W_2\otimes W'_2. $$
Explicitly, we have
\begin{equation}\label{eq:tensor}
(f \otimes g) (v\otimes w) = (-1)^{|g||v|} f(v)\otimes g(w).
\end{equation}

A $(B_1,B_2)$ superbimodule $W$ is a left $B_1$ supermodule which is a right $B_2$ supermodule for the same grading, such that
$$
	a\cdot (v\cdot b) = (a\cdot v)\cdot b.
$$
A morphism of superbimodules is a map which is a morphism of left and right supermodules.
For the remainder of this section, all objects will be assumed super though we will often omit writing the prefix ``super" from discussions of algebras, modules and bimodules.

\subsection{The algebras $B_n^\G$}
Recall the algebra
$$B_n^\G := (B^\G \otimes \dots \otimes B^\G) \rtimes S_n,$$
which is the Koszul dual of $A_n^\G$.  $B_n^\G$ contains both $\k[S_n]$ and ${B^\G}^{\otimes n}$ as subalgebras, and the action of $S_n$ on ${B^\G}^{\otimes n}$ is by superpermutations: if $s_k \in S_n$ is the simple transposition $(k,k+1)$, then
$$ s_k \cdot (b_1\otimes \dots \otimes b_k \otimes b_{k+1}\otimes \dots \otimes b_n) = (-1)^{|b_k||b_{k+1}|} b_1\otimes \dots \otimes b_{k+1}\otimes b_k \otimes \dots \otimes b_n.
$$
\subsection{Functors $\fP$ and $\fQ$}

Consider the natural inclusion $B_n^\G \hookrightarrow B_{n+1}^\G$. We let $P^\G(n)$ and $(n)Q^\G$ denote $B_{n+1}^\G \{1\}$ and $B_{n+1}^\G$ thought of as a $(B_{n+1}^\G, B_n^\G)$ bimodule and $(B_n^\G,B_{n+1}^\G)$ bimodule respectively. 

Now $\k[S_{n+1}]$ is a flat $\k[S_n]$ module where the module structure is defined via the natural inclusion $\k[S_n] \rightarrow \k[S_{n+1}]$. Subsequently $B_{n+1}^\G$ is a flat $B_n^\G$ module. It follows that we can define functors 
$$\fP(n): B_n^\G \dgmod \longrightarrow B_{n+1}^\G \dgmod \text{ and } (n)\fQ: B_{n+1}^\G\dgmod \longrightarrow B_n^\G \dgmod$$
between abelian categories (instead of derived categories) using
$$\fP(n)(\cdot) := P^\G(n) \otimes_{B_n^\G} (\cdot) \text{ and } (n)\fQ(\cdot) := (n)Q^\G \otimes_{B_{n+1}^\G} (\cdot).$$
As before we usually omit the $(n)$ to simplify notation.

\subsection{Natural Transformations}

\subsubsection{Definition of $X$}

For any $n \geq 1$ there is an inclusion $B_1^\G \hookrightarrow B_{n}^\G$ given by sending an element of $B_1^\G$ to the last factor of the product
$$ a \mapsto (1 \otimes 1 \otimes \dots \otimes 1 \otimes a) \in B_n^\G. $$
The image of this inclusion supercommutes with the subalgebra $B_{n-1}^\G \subset B_n^\G$. We henceforth consider $B_1^\G$ as a subalgebra of $B_n^\G$ via this embedding.  For homogeneous $b \in B_1^\G$ we define 
$$X(b) : \ \ B^\G_n \longrightarrow B^\G_n \ \ w \mapsto  (-1)^{|w||b|}w b$$
using right multiplication by $b \in B_1^\G \subset B_{n}^\G$. 

The map $X(b)$ is a map of $(B_n^\G,B_{n-1}^\G)$ bimodules. We thus get a well-defined natural transformation
$$X(b) : \fP \longrightarrow \fP \{ \deg b \}.$$

\subsubsection{Definition of $T$}

Notice that $\fP\fP: B_n^\G \dgmod \rightarrow B_{n+2}^\G \dgmod$ is induced by the $(B_{n+2}^\G, B_n^\G)$ bimodule 
$$B_{n+2}^\G \{1\} \otimes_{B_{n+1}^\G} B_{n+1}^\G \{1\} \cong B_{n+2}^\G \{2\}.$$
So to define a degree zero map $T: \fP \fP \rightarrow \fP \fP$ we need a map $B_{n+2}^\G \rightarrow B_{n+2}^\G$ of $(B_{n+2}^\G, B_n^\G)$ bimodules. 
For this we use $w \mapsto w t_{n+1}$
where $t_{n+1}$ is the simple transposition $(n+1,n+2) \in S_{n+2}$. Since $t_{n+1}$ commutes with the subalgebra $B_n^\G \subset B_{n+2}^\G$ this gives a well defined map of $(B_{n+2}^\G, B_n^\G)$ bimodules. 

\subsubsection{Definition of $\adj: \fQ \fP \rightarrow \id\{-1\}$}

The identity functor on $B_n^\G \dgmod$ is given by tensoring with the $(B_n^\G, B_n^\G)$ bimodule $B_n^\G$. On the other hand, $\fQ \fP$ is induced by the bimodule
$$B^\G_{n+1} \otimes_{B_{n+1}^\G} B^\G_{n+1} \{1\}.$$  
As a $(B_n^\G,B_n^\G)$ bimodule, this is generated by the subalgebra $B_1^\G$ and the element $s_n$ (so any $(B_n^\G,B_n^\G)$ bimodule map is determined by where it sends $B_1^\G$ and $s_n$). We define a $(B_n^\G, B_n^\G)$ bimodule map
$$B^\G_{n+1} \otimes_{B_{n+1}^\G} B^\G_{n+1} \rightarrow B_n^\G \{-2\}$$
by $s_n \mapsto 0$, $1 \mapsto 0$, $ v_1 \mapsto 0$, $v_2 \mapsto 0$, and $\omega = v_1 \wedge v_2 \mapsto 1$ where $\{ 1, v_1, v_2, v_1 \wedge v_2 \}$ is the standard basis of $B_1^\G$.  This induces a map $\fQ \fP \rightarrow \id \{-1\}$.

\subsubsection{Definition of $\adj: \fP \fQ \rightarrow \id \{1\}$}
The algebra multiplication 
$$ B^\G_{n+1} \otimes_{B_n^\G} B^\G_{n+1} \rightarrow B^\G_{n+1} \ \ a \otimes b \mapsto ab $$
is a map of $(B_{n+1}^\G, B_{n+1}^\G)$ bimodules and induces the map $\adj: \fP \fQ \longrightarrow \id \{1\}.$

\subsubsection{Definition of $\adj: \id \rightarrow \fP \fQ \{1\}$}

The functor $\fP \fQ \{1\}$ is induced by the $(B_{n+1}^\G, B_{n+1}^\G)$ bimodule $B_{n+1}^\G \otimes_{B_n^\G} B_{n+1}^\G \{2\}.$ Now any $(B_{n+1}^\G, B_{n+1}^\G)$ bimodule map from $B_{n+1}^\G$ to this bimodule is determined by the image of $1 \in B_{n+1}^\G$. Moreover, an element $b \in B^\G_{n+1} \otimes_{B_n^\G} B^\G_{n+1}$ can be in the image of $1$ under such a bimodule map if and only if $b$ is a Casimir element of $B^\G_{n+1} \otimes_{B_n^\G} B^\G_{n+1}$ (i.e. if and only if $yb = (-1)^{|b||y|}by$ for all $y \in B_{n+1}^\G$).

We define the bimodule map $B^\G_{n+1} \longrightarrow B^\G_{n+1} \otimes_{B_n^\G} B^\G_{n+1} \{2\}$ by $1 \mapsto b_0 $ where 
\begin{eqnarray*}
b_0 &=& \sum_{b \in \mathcal{B}} \sum_{i=1}^{n+1} s_i \dots s_n b^\vee\otimes b s_n\dots s_i.
\end{eqnarray*}
where $\mathcal{B}$ is our basis of $B_1^\G \subset B_{n+1}^\G$ and (by convention) $s_i \dots s_n = s_n \dots s_i = 1$ when $i=n+1$ in the summation above. We leave it to the reader to check that $b_0$ is a Casimir element. This map induces $\adj: \id \rightarrow \fP \fQ \{1\}$.  

\subsubsection{Definition of $\adj: \id \rightarrow \fQ \fP \{-1\}$}

The functor $\fQ \fP \{-1\}$ is induced by the $(B_n^\G, B_n^\G)$ bimodule 
$$ B_{n+1}^\G \otimes_{B_{n+1}^\G} B_{n+1}^\G \cong B_{n+1}^\G.$$ 
The natural inclusion of $B_n^\G$ as a subalgebra of $B_{n+1}^\G$ is a map of $(B_n^\G, B_n^\G)$ bimodules and defines $\adj: \id \longrightarrow \fQ \fP \{-1\}$.

\subsection{The abelian 2-representation}

The following is the analogue of Theorem \ref{thm:main2}.

\begin{theorem}\label{thm:Koszul}
The functors $\fP$ and $\fQ$ together with natural transformations $X,T$, and $\adj$ defined above give a categorical action of $\H_\G$ on the abelian category $\oplus_n B_n^\G \dgmod$.
\end{theorem}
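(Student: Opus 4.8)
## Proof proposal for Theorem \ref{thm:Koszul}

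The plan is to mimic, essentially verbatim, the verification carried out for Theorem \ref{thm:main2} in Section \ref{sec:action2}, but in the (easier) non-derived setting: since $B_{n+1}^\G$ is flat over $B_n^\G$, every functor is exact and every natural transformation is an honest bimodule map rather than a map in a derived category. First I would record the list of relations that need checking --- these are exactly the defining relations of $\H'_\G$ from Section \ref{subsec:catH'}: the algebra relations for dots (collision $X(b)X(b')=X(bb')$, supercommutativity of dots on distinct strands, $\Gamma$-equivariance $X(\gamma)X(v)=X(\gamma v)X(\gamma)$); the biadjunction (zig-zag) identities for the four cup/cap maps; the pitchfork/isotopy relations; the symmetric group relations $T^2=\id$ and the braid relation on $\fP\fP\fP$; the relations describing how $T$ interacts with dots; the counterclockwise circle evaluation $\tr$ and the vanishing of the left-twist curl; and the two crossing relations (\ref{eq:rel1}) and (\ref{eq:rel2}) involving oppositely oriented strands. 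All of these become straightforward bimodule computations because $P^\G(n)=B_{n+1}^\G\{1\}$, $(n)Q^\G=B_{n+1}^\G$, $\fP\fP$ is induced by $B_{n+2}^\G\{2\}$, $\fQ\fP$ by $B_{n+1}^\G\otimes_{B_{n+1}^\G}B_{n+1}^\G\{1\}$, and so on; each bimodule is cyclic or nearly cyclic over the relevant pair of algebras, so a map is pinned down by its value on $1$ and on the relevant transpositions $s_k$.

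The key steps, in order: (1) Check that $X$ extends from generators $v\in V$, $\gamma\in\G$ to all homogeneous $b\in B^\G$ --- this is immediate since $X(b)$ is literally right multiplication by $b\in B_1^\G\subset B_n^\G$, so composition of $X$'s is multiplication in $B_1^\G$ and the sign in the definition $X(b)\colon w\mapsto (-1)^{|w||b|}wb$ produces exactly the supercommutation of dots on far-apart strands, using equation (\ref{eq:tensor}). (2) Check the zig-zag identities: the maps $\adj\colon\fQ\fP\to\id\{-1\}$ ($\omega\mapsto 1$, everything else $\mapsto 0$), $\adj\colon\fP\fQ\to\id\{1\}$ (multiplication), $\adj\colon\id\to\fP\fQ\{1\}$ ($1\mapsto b_0$), $\adj\colon\id\to\fQ\fP\{-1\}$ (inclusion $B_n^\G\hookrightarrow B_{n+1}^\G$) must compose to identities; this reduces to the statement that $\sum_{b}b^\vee\otimes b$ evaluated through the trace gives $1$, i.e. the non-degeneracy of $\tr$ on $B^\G$, which is asserted in Section \ref{sec:catH}. (3) Verify that $b_0=\sum_{b\in\mathcal B}\sum_{i=1}^{n+1}s_i\cdots s_n\,b^\vee\otimes b\,s_n\cdots s_i$ is genuinely a Casimir element of $B^\G_{n+1}\otimes_{B_n^\G}B^\G_{n+1}$; the argument is the same bookkeeping as in the check of (\ref{eq:adj4}) in Section \ref{sec:action2}, splitting the generators of $B_{n+1}^\G$ into $B_1^{\otimes n+1}$-type elements (where $\sum_b b^\vee\otimes b$ being central in $B^\G\otimes_\k B^\G$ does the job) and the simple transpositions $s_k$ (where one telescopes the sum using $s_ks_i\cdots s_n=s_i\cdots s_n s_{k\mp1}$). (4) Check the remaining $\H'_\G$ relations: the circle relation $\bigcirc_b=\tr(b)$ follows from the definition of $\adj\colon\fQ\fP\to\id\{-1\}$ together with the definition of $b_0$; the left-twist curl vanishing and the triple-point/pitchfork relations are degree/combinatorial checks identical to Section \ref{sec:action2}; relation (\ref{eq:rel2}) --- the ``$PQ$ versus $QP$'' relation --- is where the Casimir element enters crucially and will require writing out the two compositions on the cyclic generators $1$ and $s_n$ and matching them against $\id - \sum_b(\text{cup-cap with }b,b^\vee)$.

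I expect the main obstacle to be step (4), specifically verifying relation (\ref{eq:rel2}) (the mixed crossing relation $TT=\id-\sum_b\cup_b\cap_{b^\vee}$ on $\fQ\fP$) and its orientation-reversed partner, because this is the one relation that genuinely couples the crossing $T$, the adjunction maps, and the Casimir sum all at once, and getting the signs right in the super setting --- with the superpermutation action of $S_n$ on $(B^\G)^{\otimes n}$ and the sign conventions (\ref{eq:tensor}) for tensor products of module maps --- is delicate. Everything else is either a formal consequence of biadjunction (as the paper repeatedly notes, half of each family of relations follows from the other half by adjunction), a symmetric-group identity that holds on the nose because $T$ acts by an honest transposition, or a degree-zero-implies-scalar argument. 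I would organize the writeup by first disposing of the dot relations and the symmetric group relations in a sentence each, then doing the zig-zag identities, then the Casimir check, and finally devoting the bulk of the argument to (\ref{eq:rel2}); the verification being ``straightforward'' in the sense promised by the theorem statement, I would present the routine parts tersely and only spell out the mixed crossing relation in detail.
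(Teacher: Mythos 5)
Your proposal follows exactly the route the paper takes: the published proof simply states that one verifies the defining relations of $\H'_\G$ by the same computations as in Section \ref{sec:action2}, only easier because no resolutions or derived functors appear, and leaves the (straightforward) checks to the reader. Your more detailed plan --- dot relations, zig-zag identities via non-degeneracy of $\tr$, the Casimir property of $b_0$, and the mixed crossing relation as the only genuinely delicate sign check --- is a correct fleshing-out of that same argument.
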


The proof of the above theorem is analogous to the proof of Theorem \ref{thm:main2} but easier since none of the functors or natural transformations are derived (in other words, there are no complexes involved and nowhere does one need to take resolutions). 

\subsection{Relation to wreath products $\k[\G^n \rtimes S_n]$}\label{sec:wreath}

The algebra $\k[\G^n \rtimes S_n]$ embeds as a subalgebra (concentrated in degree zero) of $B_n^\G$.  This embedding gives rise to induction and restriction functors
$$\Ind_n : \k[\G^n \rtimes S_n] \dgmod \rightarrow B_n^\G \dgmod$$
$$\Res_n: B_n^\G \dgmod \rightarrow  \k[\G^n \rtimes S_n] \dgmod$$
between categories of finite dimensional (over $\k$) graded (super) modules. These functors are (up to shift) left and right adjoint to each other. 

Denote by $K_0(\k[\G^n \rtimes S_n])$ and $K_0(B_n^\G)$ the complexified Grothendieck groups of these categories and by $K_P(\k[\G^n \rtimes S_n]])$ and $K_P(B_n^\G)$ the subgroups generated by projective modules. Of course, since the characteristic of $\k$ is zero all $\k[\G^n \rtimes S_n]$ modules are projective and $K_0(\k[\G^n \rtimes S_n]) = K_P(\k[\G^n \rtimes S_n])$.  

At the same time $\k[\G^n \rtimes S_n]$ is also a quotient of $B_n^\G$ via the surjection which kills all elements of positive degree. This surjection induces an extension by zero functor
$$ \mbox{ex}_n:  \k[\G^n \rtimes S_n] \dgmod \longrightarrow B_n^\G \dgmod$$
whose left adjoint is the functor 
$$ \mbox{ss}_n:  B_n^\G \dgmod \longrightarrow \k[\G^n \rtimes S_n] \dgmod$$
which takes a $B_n^\G$ module to its maximal semisimple quotient considered naturally as a $\k[\G^n \rtimes S_n]$ module. These functors have the following properties: 
\begin{itemize}
\item the functors $\Res_n$ and $\mbox{ex}_n$ induce isomorphisms $ K_0(B_n^\G) \cong K_0(\k[\G^n \rtimes S_n])$
\item the functors $\Ind_n$ and $\mbox{ss}_n$ induce isomorphisms $K_P(B_n^\G) \cong K_P(\k[\G^n \rtimes S_n])$
\end{itemize}
The functor $\mbox{ss}_n$ does not have finite homological dimension so we restrict to the projective Grothendieck group in the second isomorphism. Therefore, as a corollary to Theorem \ref{thm:Koszul} we obtain the following:

\begin{cor}\label{cor:wreath}
The Heisenberg algebra $\h_\G$ acts on $\oplus_n K_0(\k[\G^n \rtimes S_n])$ and on $\oplus_n K_P(\k[\G^n \rtimes S_n])$.  
\end{cor}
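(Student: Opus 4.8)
The plan is to obtain the corollary as a formal consequence of Theorem~\ref{thm:Koszul} and Theorem~\ref{thm:main1}, together with the two Grothendieck-group isomorphisms recorded immediately above the corollary. Theorem~\ref{thm:Koszul} gives a categorical action of $\H_\G$ on the abelian category $\oplus_n B_n^\G\dgmod$; in particular, for each $i\in I_\G$ we have the functors $\fP(n)$, $(n)\fQ$ and their idempotent summands $\fP_i^{(n)},\fQ_i^{(n)}$. The first thing I would verify is that all of these functors are \emph{exact} and carry finite dimensional graded projective modules to projective modules. This is immediate from the constructions: $\fP(n)$ is $(-)\otimes_{B_n^\G}\big(B_{n+1}^\G\{1\}\big)$ and $B_{n+1}^\G$ is free (of finite rank) as a right $B_n^\G$-module, since $\k[S_{n+1}]$ is free over $\k[S_n]$ and the smash product respects this; and $(n)\fQ$ is restriction along $B_n^\G\hookrightarrow B_{n+1}^\G$, which is exact and, by the same freeness, carries projectives to projectives. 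The summands $\fP_i^{(n)},\fQ_i^{(n)}$ inherit both properties as direct summands of compositions of $\fP$ and $\fQ$.

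Granting this, applying $K_0$ to the abelian categories $B_n^\G\dgmod$ produces a ring homomorphism $K_0(\H_\G)\to\End\big(\oplus_n K_0(B_n^\G)\big)$: exactness makes every $1$-morphism descend to a well-defined operator, the shift $\la 1\ra$ acts as multiplication by $t$, and the defining relations of $\H_\G$ (for instance those of Proposition~\ref{prop:basicrel2}) descend as well. Restricting the same functors to projective modules gives a ring homomorphism $K_0(\H_\G)\to\End\big(\oplus_n K_P(B_n^\G)\big)$. Composing either one with the isomorphism $\pi\colon\h_\G\xrightarrow{\sim}K_0(\H_\G)$ of Theorem~\ref{thm:main1} yields an action of $\h_\G$ on $\oplus_n K_0(B_n^\G)$ and on $\oplus_n K_P(B_n^\G)$.

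Finally I would transport these actions along the isomorphisms stated before the corollary: $\Res_n$ and $\mbox{ex}_n$ identify $\oplus_n K_0(B_n^\G)$ with $\oplus_n K_0(\k[\G^n\rtimes S_n])$, while $\Ind_n$ and $\mbox{ss}_n$ identify $\oplus_n K_P(B_n^\G)$ with $\oplus_n K_P(\k[\G^n\rtimes S_n])$; since $\k$ has characteristic zero the groups $K_0(\k[\G^n\rtimes S_n])$ and $K_P(\k[\G^n\rtimes S_n])$ coincide, and both assertions of the corollary follow. I do not expect a genuine obstacle here: the only non-citational point is the exactness and projective-preservation of the functors above, and the remainder is bookkeeping — chiefly keeping the grading shifts $\{\pm 1\}$ straight so that $t$ acts consistently, and noting that one may transport along the stated Grothendieck-group isomorphisms without needing them to intertwine anything, since the corollary only asserts the existence of an action.
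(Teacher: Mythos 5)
Your proposal is correct and follows essentially the same route as the paper: the categorical action of Theorem \ref{thm:Koszul} on $\oplus_n B_n^\G\dgmod$ descends to $K_0$ and $K_P$ (the paper leaves the exactness/projective-preservation of $\fP$ and $\fQ$ implicit, which you rightly justify via freeness of $B_{n+1}^\G$ over $B_n^\G$), and the action is then transported along the isomorphisms induced by $\Res_n$, $\mbox{ex}_n$, $\Ind_n$, $\mbox{ss}_n$ stated just before the corollary.
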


This recovers a theorem of Frenkel-Jing-Wang from \cite{FJW1}. However, this action does not appear to come from a categorical action of $\H_\G$ on the categories $\oplus_n \k[\G^n \rtimes S_n] \dgmod$ (in part because the 2-morphisms in $\H_\G$ have non-trivial gradings). Indeed the functors constructed in \cite{FJW1} give rise to a \emph{non-deformed} Heisenberg algebra action (given by setting $t=1$ in the definition of $\h_\G$) on $\oplus_n K_0(\k[\G^n \rtimes S_n])$. 

In \cite{FJW2} Frenkel-Jing-Wang obtain quantum Heisenberg actions by replacing $\G$ with $\G \times \k^\times$. This corresponds to considering graded versus ungraded $A_n^\G$-modules in this section as well as Section \ref{sec:action}. 

\section{Other remarks and open problems}\label{sec:open}

We would like to end by making some general remarks and discussing what we consider to be some interesting unanswered questions. 

\subsection{Relationship to Khovanov's Heisenberg categorification and work of Shan-Vasserot}\label{sec:khovanov}

Our 2-category $\H_\G$ takes as its input data the embedded finite subgroup $\G \subset SL_2(\C)$, and the definition of $\H_\G$ can be generalized to an arbitrary embedded finite group $\G \subset SL_n(\C)$. The associated Heisenberg 2-category will then act naturally on $\oplus_n D(A_n^\G \dmod)$ where $A_n^\G$ is defined as above except that now $V$ is the standard representation of $SL_n(\C)$. In fact, such a construction already makes sense when $\G$ is trivial and $V=0$. In this case the definition of the associated Heisenberg category is due to Khovanov \cite{K}. 

In Khovanov's construction, like ours, the 1-morphisms are generated by two elements $P$ and $Q$ which are biadjoint up to shift. The difference appears in the structure of 2-morphisms. Here the r\^oles of $\G$ and $V$ become apparent. In particular, our categories inherit a non-trivial $\Z$ grading from the natural grading on the exterior algebra $\Lambda^*(V)$. This grading makes the Grothendieck group of $\H_\G$ into a $\Z[t,t^{-1}]$ module which categorifies a quantum deformation of the Heisenberg algebra.

It would be interesting to formulate a deeper relationship between our categorification and his. One possible source for such a relationship comes from the representation theory of Hecke algebras at roots of unity. In particular, in \cite{LS} the second author, together with Alistair Savage, defined a $q$-deformation of Khovanov's Heisenberg categorification, recovering Khovanov's definition at $q=1$. At roots of unity, the Grothendieck group of the categorification of \cite{LS} changes. Moreover, if $q = e^{\frac{2\pi i}{k}}$, the associated Heisenberg category acts on $\bigoplus_{n \ge 0} H_n(q) \dmod$ where $H_n$ is the Hecke algebra of the symmetric group $S_n$. The category $\bigoplus_{n \ge 0} H_n(q) \dmod$ was used earlier by Ariki to categorify the basic representation of the affine Lie algebra $\widehat{\sl_k}$ \cite{A}. From this point of view, it is tempting to conjecturally identify Khovanov's Heisenberg categorification (after deforming as in \cite{LS} and setting $q = e^{\frac{2 \pi i}{k}}$) with a categorification of the \emph{principal} Heisenberg subalgebra of the affine Lie algebra $\widehat{\sl_k}$.

On the other hand, in \cite{CL} we describe a precise relationship between $\H^\G$ where $\G = \Z/k\Z$ is cyclic and the quantum affine algebra $\widehat{\sl_k}$. Namely, $\H^\G$ categorifies the \emph{homogeneous} Heisenberg subalgebra of $\widehat{\sl_k}$. Thus, at least in type A, a concrete relationship between our categorification and Khovanov's might come from the 2-representation theory of the affine Lie algebra $\widehat{\sl_k}$.  

More recently, categorical Heisenberg actions have also appeared in the theory of cyclotomic rational double affine Hecke algebras (rDAHA). In \cite{ShV}, Shan and Vasserot describe certain induction and restriction functors between these rDAHAs which are also defined using a cyclic subgroup $\G \subset SL_2(\C)$. In fact, these rDAHAs are deformations of our algebras $A_n^\G$ so the existence of a Heisenberg action should not be entirely surprising. 

On the other hand, as far as we understand, Shan and Vasserot do not compute the categorical commutator relations between their $P$ and $Q$ (they only know the commutator relation at the level of Grothendieck groups). Moreover, the type of Heisenberg they seem to get is a higher level version of Khovanov's. This means that they have generators $P$ and $Q$ (but no $P_i$ and $Q_i$ for $i \in I_\G$) with $[P,Q]$ equal to several copies of the identity. 

After passing to the Grothendieck group their Heisenberg corresponds to the diagonal Heisenberg inside $\widehat{\gl_k}$ whereas in this paper we consider the homogeneous Heisenberg inside $\widehat{\sl_k}$. These two combine to give the homogeneous Heisenberg inside $\widehat{\gl_k}$. So it is likely that the construction in \cite{ShV} is complementary to our work. 

There is a large amount of work still to be done in this area and we are happy to bring to the attention of the reader the problem of relating all the various categorical appearances of the Heisenberg algebra. 

\subsection{Relation to Kac-Moody 2-categories $\mathcal{U}(\g)$}

When $\g$ is the affine Lie algebra associated to $\G$ we expect a direct relationship between the 2-categories $\H^\G$ and $\mathcal{U}(\g)$. This expectation is motivated by the existence of two presentations of the affine Lie algebra $U(\g)$, one as an extension of the loop algebra of the corresponding finite dimensional Lie algebra and one as a Kac-Moody Lie algebra. 

These two presentations should have two distinct categorifications. The 2-categories $\mathcal{U}(\g)$ from \cite{KL3,R} categorify the Kac-Moody presentation. A second ``loop categorification'' should come from combining the 2-category associated with the finite dimensional Lie algebra with our Heisenberg 2-category $\H^\G$. One would like an equivalence between these categorifications which lifts the isomorphism between the two different presentations. We will study such a relationship in \cite{CL}. 

\subsection{Degenerate affine Hecke algebras}

In $H^\G$ we will also use hollow dots as shorthand for drawing right-twist curls. Subsequently we obtain the following relations.

$$
\begin{tikzpicture}[>=stealth]
\draw (0,0) -- (0,2)[->][very thick];
\draw [red] (0,1.33) circle (4pt);
\draw (0,.44) node [anchor=east] [black] {$i$};
\draw (0,.88) node [anchor=east] [black] {$j$};
\filldraw [blue] (0,.66) circle (2pt);
\draw (.5,1) node {=};
\draw (1,0) -- (1,2)[->][very thick];
\draw [red] (1,.66) circle (4pt);
\filldraw [blue] (1,1.33) circle (2pt);
\draw (1,1.11) node [anchor=west] [black] {$i$};
\draw (1,1.55) node [anchor=west] [black] {$j$};

\draw [shift={+(5,0)}](0,0) -- (0,2)[->][very thick];
\draw [shift={+(5,0)}][red] (0,1.33) circle (4pt);
\draw [shift={+(5,0)}](0,.44) node [anchor=east] [black] {$i$};
\draw [shift={+(5,0)}](0,.88) node [anchor=east] [black] {$i$};
\filldraw [shift={+(5,0)}][blue] (0,.66) circle (2pt);
\draw [shift={+(5,0)}](.5,1) node {=};
\draw [shift={+(5,0)}](1,0) -- (1,2)[->][very thick];
\draw [shift={+(5,0)}] [red] (1,.66) circle (4pt);
\filldraw [shift={+(5,0)}][blue] (1,1.33) circle (2pt);
\draw [shift={+(5,0)}](1,1.11) node [anchor=west] [black] {$i$};
\draw [shift={+(5,0)}](1,1.55) node [anchor=west] [black] {$i$};
\end{tikzpicture}
$$
where $i,j \in I_\G$ are joined by an edge. The hollow dots have an interesting ``affine Hecke" type relation with crossings:
$$
\begin{tikzpicture}[>=stealth]
\draw (6.1,-0.2) node [anchor=east] [black] {$i$};
\draw (5.4,-0.2) node [anchor=east] [black] {$i$};
\draw (4.4,-0.2) node [anchor=east] [black] {$i$};
\draw (3.7,-0.2) node [anchor=east] [black] {$i$};
\draw (2.7,-0.2) node [anchor=east] [black] {$i$};
\draw (1.7,-0.2) node [anchor=east] [black] {$i$};
\draw (0.2,-0.2) node [anchor=east] [black] {$i$};
\draw (1.2,-0.2) node [anchor=east] [black] {$i$};
\draw [->](0,0) -- (1,1) [very thick];
\draw [->](1,0) -- (0,1) [very thick];
\draw [red](.25,.25) circle (4pt);
\draw (1.25,.5) node{$=$};
\draw [->](1.5,0) -- (2.5,1) [very thick];
\draw [->](2.5,0) -- (1.5,1) [very thick];
\draw [red](2.25,.75) circle (4pt);
\draw (3,.5) node{$+$};
\draw [shift={+(3.5,0)}][->](0,0) -- (0,1) [very thick];
\draw [shift={+(3.5,0)}][->](.75,0) -- (.75,1) [very thick];
\filldraw [shift={+(3.5,0)}][blue](0,.5) circle (2pt);
\draw (4.7,.5) node{$+$};
\draw [shift={+(5.2,0)}][->](0,0) -- (0,1) [very thick];
\draw [shift={+(5.2,0)}][->](.75,0) -- (.75,1) [very thick];
\filldraw [shift={+(5.2,0)}][blue](.75,.5) circle (2pt);
\end{tikzpicture}
$$

$$
\begin{tikzpicture}[>=stealth]
\draw (6.1,-0.2) node [anchor=east] [black] {$i$};
\draw (5.4,-0.2) node [anchor=east] [black] {$i$};
\draw (4.4,-0.2) node [anchor=east] [black] {$i$};
\draw (3.7,-0.2) node [anchor=east] [black] {$i$};
\draw (2.7,-0.2) node [anchor=east] [black] {$i$};
\draw (1.7,-0.2) node [anchor=east] [black] {$i$};
\draw (0.2,-0.2) node [anchor=east] [black] {$i$};
\draw (1.2,-0.2) node [anchor=east] [black] {$i$};
\draw [->](0,0) -- (1,1) [very thick];
\draw [->](1,0) -- (0,1) [very thick];
\draw [red](.25,.75) circle (4pt);
\draw (1.25,.5) node{$=$};
\draw [->](1.5,0) -- (2.5,1) [very thick];
\draw [->](2.5,0) -- (1.5,1) [very thick];
\draw [red](2.25,.25) circle (4pt);
\draw (3,.5) node{$+$};
\draw [shift={+(3.5,0)}][->](0,0) -- (0,1) [very thick];
\draw [shift={+(3.5,0)}][->](.75,0) -- (.75,1) [very thick];
\filldraw [shift={+(3.5,0)}][blue](0,.5) circle (2pt);
\draw (4.7,.5) node{$+$};
\draw [shift={+(5.2,0)}][->](0,0) -- (0,1) [very thick];
\draw [shift={+(5.2,0)}][->](.75,0) -- (.75,1) [very thick];
\filldraw [shift={+(5.2,0)}][blue](.75,.5) circle (2pt);
\end{tikzpicture}
$$

$$
\begin{tikzpicture}[>=stealth]
\draw (4.8,-0.2) node [anchor=east] [black] {$j$};
\draw (4.1,-0.2) node [anchor=east] [black] {$i$};
\draw (2.7,-0.2) node [anchor=east] [black] {$j$};
\draw (1.7,-0.2) node [anchor=east] [black] {$i$};
\draw (0.2,-0.2) node [anchor=east] [black] {$i$};
\draw (1.2,-0.2) node [anchor=east] [black] {$j$};
\draw [->](0,0) -- (1,1) [very thick];
\draw [->](1,0) -- (0,1) [very thick];
\draw [red](.25,.25) circle (4pt);
\draw (1.25,.5) node{$=$};
\draw [->](1.5,0) -- (2.5,1) [very thick];
\draw [->](2.5,0) -- (1.5,1) [very thick];
\draw [red](2.25,.75) circle (4pt);
\draw (3,.5) node{$+ \hspace{.2cm} \epsilon_{ij}$};
\draw [shift={+(3.9,0)}][->](0,0) -- (0,1) [very thick];
\draw [shift={+(3.9,0)}][->](.75,0) -- (.75,1) [very thick];
\filldraw [shift={+(3.9,0)}][blue](0,.66) circle (2pt);
\filldraw [shift={+(3.9,0)}][blue](.75,.33) circle (2pt);
\end{tikzpicture}
$$

$$
\begin{tikzpicture}[>=stealth]
\draw (4.8,-0.2) node [anchor=east] [black] {$j$};
\draw (4.1,-0.2) node [anchor=east] [black] {$i$};
\draw (2.7,-0.2) node [anchor=east] [black] {$j$};
\draw (1.7,-0.2) node [anchor=east] [black] {$i$};
\draw (0.2,-0.2) node [anchor=east] [black] {$i$};
\draw (1.2,-0.2) node [anchor=east] [black] {$j$};
\draw [->](0,0) -- (1,1) [very thick];
\draw [->](1,0) -- (0,1) [very thick];
\draw [red](.25,.75) circle (4pt);
\draw (1.25,.5) node{$=$};
\draw [->](1.5,0) -- (2.5,1) [very thick];
\draw [->](2.5,0) -- (1.5,1) [very thick];
\draw [red](2.25,.25) circle (4pt);
\draw (3,.5) node{$+ \hspace{.2cm} \epsilon_{ij}$};
\draw [shift={+(3.9,0)}][->](0,0) -- (0,1) [very thick];
\draw [shift={+(3.9,0)}][->](.75,0) -- (.75,1) [very thick];
\filldraw [shift={+(3.9,0)}][blue](0,.33) circle (2pt);
\filldraw [shift={+(3.9,0)}][blue](.75,.66) circle (2pt);
\end{tikzpicture}
$$

These relations are similar to the relations in the degenerate affine Hecke algebra of the symmetric group and its wreath products, which suggests that the endomorphism algebra of $P_i^n$ (or more generally the endomorphism algebra of $P_{i_1}^{n_1}\dots P_{i_k}^{n_k}$) is an interesting algebraic object in its own right.  It is perhaps worthwhile to express at least one of these algebras in more standard algebraic notation. 

We thus define the graded $\k$-algebra $H^n_i$ as follows. The generators of $H_i^n$ are elements $y_k,z_k$ for $k=1,\dots,n$ and $t_l$ for $1 \le l \le n-1$ where
$$\deg(y_k) = \deg(z_k) = 2 \text{ and } \deg(t_l) = 0.$$
The relations are as follows. Firstly, the $\{t_l\}_{l=1}^{n-1}$ generate a subalgebra isomorphic to the group algebra $\k[S_n]$ of the symmetric group, with $t_l$ equal to the simple transposition $(l,l+1)$.  The 
$\{y_k\}_{k=1}^n$ generate a symmetric algebra $\Sym(y_1,\dots,y_n)$, and $\{z_k\}_{k=1}^n$ generate an exterior algebra $\Lambda(z_1,\dots,z_n)$.  The remaining relations are
\begin{eqnarray*}
y_kz_l = z_ly_k \text{ for all } k,l\\
t_ky_l = y_l t_k \text{ for } l\neq k,k+1,\\
t_kz_l = z_lt_k \text{ for } l \neq k,k+1,\\
t_kz_{k+1} =z_kt_k, \\
t_kz_k = z_{k+1}t_k,\\
t_ky_{k+1} = y_kt_k + z_k + z_{k+1},\\
y_{k+1}t_{k} = t_ky_k + z_k + z_{k+1}.
\end{eqnarray*} 

Note that there is a natural map of $\k$-algebras $H_i^n \longrightarrow \End_{\H^\G}(P_i^{n})$ taking $t_k$ to a crossing between the $k$ and $k+1$st strands, $y_k$ to a hollow dot on the $k$th strand, and $z_k$ to a solid dot on the $k$th strand.  

\begin{conj}\label{conj:injective}
The natural map of $\k$-algebras $H_i^n \longrightarrow \End_{\H^\G}(P_i^{n})$ is injective.
\end{conj}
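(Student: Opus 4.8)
\textbf{Proof proposal for Conjecture \ref{conj:injective}.}

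The plan is to prove injectivity by constructing a faithful graded representation of $H_i^n$ and checking that the map $H_i^n \to \End_{\H^\G}(P_i^n)$ is compatible with it, so that faithfulness of the abstract representation forces injectivity of the categorical map. The natural candidate is the ``polynomial'' representation on $W_n := \Sym(y_1,\dots,y_n) \otimes \Lambda(z_1,\dots,z_n) \otimes \k[S_n]$, regarded as a free module over the commutative-exterior part, on which $t_k$ acts on the right by the permutation and by a Demazure-type divided-difference correction that accounts for the mixed relations $t_k y_{k+1} = y_k t_k + z_k + z_{k+1}$. First I would verify the PBW-type statement that $W_n$ is a free module of rank $n!$ over $\Sym(y) \otimes \Lambda(z)$ with basis the $t_w$ for $w \in S_n$: this follows by a standard diamond-lemma / rewriting argument from the defining relations, pushing all $t$'s to the right and all $y$'s and $z$'s to the left, the only nontrivial resolvable ambiguities being the ones coming from the braid relation on the $t_k$ and the two mixed relations, which are straightforward to check. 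Granting this, $H_i^n$ has dimension (in each graded piece) equal to that of $\Sym(y)\otimes\Lambda(z)\otimes\k[S_n]$, and the representation on $W_n$ is faithful because the cyclic vector $1$ generates and the action on $1 \otimes 1 \otimes 1$ already separates the PBW basis.

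Next I would identify the categorical target. By Lemma \ref{lem:1} (applied with $m_i = n$ and all other exponents zero), $\End_{\H^\G}(P_i^n)$ has no negative-degree part and its degree-zero part is exactly $\k[S_n]$, realized by braid-like diagrams; and by the remark after Lemma \ref{lem:1}, every endomorphism is, up to the local relations, a sum of braid-like diagrams decorated by solid and hollow dots (a hollow dot being shorthand for a right-twist curl of degree $2$, as introduced in Section \ref{sec:relremarks}). Using the affine-Hecke-type relations for $H^\G$ displayed just above the statement — which let one move hollow dots through crossings at the cost of lower-order dot terms — together with the fact that solid dots on a single $i$-colored strand square to zero (relation \eqref{eq:rel2'} says a left curl is zero, and two $ii$-dots on a strand bound a left curl after an isotopy, exactly as in Equation \eqref{eq:zero}), one can put any such decorated diagram into the normal form: all crossings gathered at the top, then hollow dots $y_k$, then at most one solid dot $z_k$ per strand. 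Thus $\End_{\H^\G}(P_i^n)$ is spanned, in each degree, by the images of the PBW monomials of $H_i^n$. Since the map is visibly a $\k$-algebra homomorphism sending generators to generators, surjectivity onto the span is automatic, and the map is injective as soon as the spanning set of images is linearly independent — equivalently, as soon as $\dim_\k \End_{\H^\G}(P_i^n)$ in each degree is at least $\dim_\k H_i^n$ in that degree.

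The remaining step, and the main obstacle, is this dimension lower bound for $\End_{\H^\G}(P_i^n)$: one must show that the normal-form diagrams are genuinely linearly independent, i.e. that no further relations collapse them. The cleanest route is to feed the diagrams into a faithful 2-representation. Theorem \ref{thm:main2} (equivalently the Koszul-dual Theorem \ref{thm:Koszul}) gives an action of $\H_\G$, hence via $\eta$ of $\H^\G$, on $\oplus_n D(A_n^\G\dgmod)$; restricting to the summand $P_i$ and composing $n$ times, one obtains a homomorphism $\End_{\H^\G}(P_i^n) \to \End(\fP_i^n)$, and it suffices to show the composite $H_i^n \to \End(\fP_i^n)$ is injective. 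Concretely $\fP_i^n$ is tensoring with an explicit bimodule built from $A_{n+k}^\G$ and $\k[\G]e_i^{\otimes n}$, the hollow dot $y_k$ acts by the degree-two operator coming from the Euler class on the relevant $\P^1 \cong E_i$ factor (the right-twist curl), the solid dot $z_k$ by the tautological degree-two class $\omega$ restricted to the $i$-isotypic line, and $t_k$ by the geometric permutation of the $k$-th and $(k+1)$-th copies of $E_i$ as in Section \ref{sec:temp1}. One then checks that evaluating a nonzero element of $H_i^n$ against a suitable test object — e.g. the skyscraper at a configuration of $n$ distinct points of $E_i$, whose endomorphisms see $\Sym(y)$, together with the local $\Ext$-algebra of $\O_{E_i}$, which is $\k[z]/z^2$, and the symmetric group acting by permuting factors — recovers the full polynomial-exterior-group algebra, giving the desired lower bound. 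The subtlety here is purely bookkeeping of signs and of which $\Ext$-classes are hit, but conceptually it is the same kind of computation as in Sections \ref{sec:temp1}–\ref{sec:temp2}; alternatively, one can bypass the geometry entirely and run the linear-independence argument inside the abelian $B_n^\G$-module representation of Theorem \ref{thm:Koszul}, where the bimodule $B_{n+k}^\G$ makes the actions of $y_k$ (right-twist curls), $z_k$ (multiplication by the relevant $\Lambda^*(V)$-generator), and $t_k$ (simple transpositions in $S_{n+k}$) completely explicit and one checks faithfulness by evaluating on the regular bimodule.
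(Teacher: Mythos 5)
This statement is Conjecture~\ref{conj:injective} of the paper: the authors state it as an open problem and do not prove it, so there is no proof of theirs to compare yours against. What the paper does establish (Remark~\ref{rem:injective}) is only that the subalgebra of $H_i^n$ generated by the $t_k$ and the $z_k$ injects into $\End_{\H^\G}(P_i^n)$, because in the Koszul-dual action of Section~\ref{sec:koszul} these elements act by right multiplication on the bimodules $B_{n+k}^\G$; the authors explicitly single out the hollow dots $y_k$ (the Jucys--Murphy-type elements) as "the more complicated part of the conjecture." Your proposal does not close that gap.

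Two concrete problems. First, your reduction of injectivity to a dimension lower bound relies on the claim that decorated braid-like diagrams (crossings, hollow dots, at most one solid dot per strand) span $\End_{\H^\G}(P_i^n)$. This is unjustified: $\End_{\H^\G}(P_i^n)$ also contains all closed diagrams placed beside the $n$ strands, i.e.\ a copy of $\End_{\H^\G}(\id)$ generated by clockwise circles carrying dots, and these are not visibly in the image of $H_i^n$. Whether $H_i^n\otimes_\k\End_{\H^\G}(\id)\to\End_{\H^\G}(P_i^n)$ is an isomorphism is exactly Conjecture~\ref{conj:iso}, also open; without the spanning statement, knowing $\dim_\k\End_{\H^\G}(P_i^n)\geq\dim_\k H_i^n$ degreewise does not give linear independence of the images of your PBW monomials. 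Second, and more seriously, the faithfulness step you invoke cannot work as described: for each fixed weight $k$ the functor $\fP_i^n$ is given by a finite-dimensional bimodule (respectively a kernel on a finite-dimensional variety), so its endomorphism algebra in that weight is finite dimensional, and the infinite-dimensional polynomial subalgebra $\Sym(y_1,\dots,y_n)$ of hollow dots cannot act faithfully on any single test object such as "the regular bimodule" or a skyscraper at $n$ distinct points of $E_i$. One must show that every nonzero element of $H_i^n$ acts nontrivially in some weight $k$ depending on the element --- an asymptotic Jucys--Murphy analysis in the wreath-product setting, analogous to what Khovanov carries out in \cite{K} for the symmetric group, further complicated here by the mixed relation $t_ky_{k+1}=y_kt_k+z_k+z_{k+1}$, whose correction terms live in the exterior part (so even your PBW/diamond-lemma step and the "polynomial representation" with Demazure-type corrections are not the standard ones and are nowhere verified). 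That analysis is the actual content of the conjecture, and dismissing it as "purely bookkeeping of signs" is where the proposal fails.
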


\begin{Remark}\label{rem:injective}
In section \ref{sec:koszul} the functor $P^n$ is induced by the $(B_k^\G, B_{k+n}^\G)$ bimodule $B_{n+k}^\G \{n\}$. Then it is clear from the above definitions of $X(a)$ and $T$ that the map $b \in B_n^\G$ acts on $B_{n+k}^\G \{n\}$ by right multiplication. Subsequently the map $ B_n^\G \longrightarrow \End_{\H_\G}(P^n) $ is injective. When $\G$ is cyclic (i.e. when $\H_\G$ and $\H^\G$ agree), this means that the subalgebra of $H_i^n$ generated by the $t$'s and $z$'s injects into $\End_{\H^\G}(P_i^{n})$. 

The more complicated part of the conjecture above involves the hollow dots, which generate a commutative subalgebra of $\End_{\H^\G}(P^n)$.  Under the analogy between $H_i^n$ and the degenerate affine Hecke algebra, it is natural to consider the image of hollow dots in representations of $\H^\G$ as ``Jucys-Murphy elements''.  We refer the reader to \cite{K} for a related discussion where the algebra $B_n^\G$ is replaced by the group algebra $\k[S_n]$ of the symmetric group, in which case the algebra $H_i^n$ is replaced by the degenerate affine Hecke algebra.
\end{Remark}

The space of 2-morphisms $\End(\id)$ from the empty diagram to itself is spanned by closed diagrams. Moreover, since one can slide closed diagrams past vertical lines, one can assume that all closed components appear to the right of all components with boundary. On the other hand, any closed diagram acts naturally on any space of 2-morphisms in $\H^\G$, since closed diagrams can be added to the right of a 2-morphism to produce a new 2-morphism with the same source and target. A more ambitious version of Conjecture \ref{conj:injective} is that, up to the action of $\End(\id)$, all endomorphisms of $P_i^n$ come from the algebra $H_i^n$:

\begin{conj}\label{conj:iso}
The natural map $H_i^n\otimes_\k \End_{\H^\G}(\id) \longrightarrow \End_{\H^\G}(P_i^n)$ is an isomorphism. 
\end{conj}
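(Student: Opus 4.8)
The plan is to prove Conjecture~\ref{conj:iso} by combining a diagrammatic normal-form argument — which yields surjectivity together with a spanning bound — with a faithfulness argument built from the $2$-representations of Sections~\ref{sec:action} and~\ref{sec:koszul}. As a preliminary I would record a PBW-type $\k$-basis of $H_i^n$, namely the monomials $t_w\, z_S\, y^{\mathbf a}$ with $w\in S_n$ (a fixed reduced word), $S\subseteq\{1,\dots,n\}$, and $\mathbf a\in\N^n$; that these form a basis is proven exactly as for the degenerate affine Hecke algebra, i.e.\ the relations rewrite any word into this shape, and a faithful action of $H_i^n$ on $\k[S_n]\otimes\Sym(y_1,\dots,y_n)\otimes\Lambda(z_1,\dots,z_n)$ (the $t$'s acting by a Demazure-type operator, the $y$'s and $z$'s by multiplication after the permutation) gives linear independence.

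First I would establish surjectivity. Starting from an arbitrary planar diagram representing an element of $\End_{\H^\G}(P_i^n)$, I would use the isotopy relations, the adjunction (zig-zag) relations, the pitchfork relations, the dot-migration relations, the vanishing of left-twist curls in~(\ref{eq:rel2'}), the affine-Hecke-type relations for hollow dots recorded in Section~\ref{sec:open}, and crucially relations~(\ref{eq:rel3'})--(\ref{eq:rel4'}) to remove every $P_iQ_i$ or $Q_iP_i$ detour. The upshot is that any such diagram equals a $\k$-linear combination of diagrams of the form (a braid-like diagram carrying solid and hollow dots) placed to the left of a closed diagram; the former is the image of a monomial $t_w z_S y^{\mathbf a}$ and the latter lies in $\End_{\H^\G}(\id)$. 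Hence the natural map is surjective, and in fact the image of the basis of $H_i^n$ above spans $\End_{\H^\G}(P_i^n)$ as an $\End_{\H^\G}(\id)$-module.

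It then remains to prove that this spanning family is $\End_{\H^\G}(\id)$-linearly independent. For this I would pass through $\eta$ and act on a concrete category: by Theorem~\ref{thm:Koszul} the $2$-category $\H_\G$, hence $\H^\G$, acts on $\oplus_m B_m^\G\dgmod$, and by Remark~\ref{rem:injective} the $1$-morphism $P_i^n$ is there given by right multiplication by $B_{n+k}^\G$, so that $t_k$ acts by the transposition $s_k$ and $z_k$ by $\omega$ in the appropriate tensor slot; this already faithfully realizes the subalgebra of $H_i^n$ generated by the $t$'s and $z$'s. The new ingredient is the hollow dot: one must compute the natural transformation realizing the right-twist curl $y_k$ explicitly on these bimodules, which I expect to be a ``Jucys--Murphy'' element of $B_{n+k}^\G$ (a sum over $j<k$ of transposition-times-Casimir terms, in the spirit of Khovanov's construction~\cite{K} and its $\G$-equivariant analogue). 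Granting this, the image of $H_i^n$ is a concrete degenerate-affine-Hecke-type algebra for $B^\G$ for which a PBW basis can be established directly; combined with the non-degeneracy of $\End_{\H^\G}(\id)$ — the fact, extractable from the proof of Theorem~\ref{thm:main1}, that distinct closed diagrams act by independent operators — this separates the spanning family and yields linear independence. Working $\k^\times$-equivariantly (with the algebras $\hat A_n^\G$ and their Koszul duals) gives the finer grading control likely needed to disentangle the $y$-degrees from the $z$-degrees.

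The main obstacle is this last faithfulness step, and within it the two tasks of pinning down the action of the hollow dot $y_k$ and then proving a PBW/basis theorem for the affine algebra it generates together with the $t$'s, the $z$'s, and the closed diagrams. This is precisely the subtlety flagged in Remark~\ref{rem:injective}: the hollow dots form a commutative subalgebra not visible from the evident $B_n^\G$-action, so controlling it requires genuinely new input, and the bookkeeping of the ``bubble'' diagrams produced when resolving curls — the same $H^\star(\P^k)$ factors appearing in Propositions~\ref{prop:basicrel}--\ref{prop:basicrel2} — is where essentially all the work lies. Note finally that Conjecture~\ref{conj:iso} subsumes Conjecture~\ref{conj:injective}, so the same argument settles both.
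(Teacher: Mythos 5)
There is a genuine gap here, and in fact the statement you are trying to prove is stated in the paper only as a conjecture (Conjecture \ref{conj:iso}); the paper offers no proof, and even the weaker Conjecture \ref{conj:injective} is left open, with Remark \ref{rem:injective} establishing only that the subalgebra of $H_i^n$ generated by the $t$'s and $z$'s injects. Your surjectivity/normal-form step is plausible and in the spirit of the paper's Lemma \ref{lem:1} and the discussion of $\End_{\H^\G}(\id)$, but your injectivity step does not go through as written. You propose to separate the spanning family by acting on $\oplus_m B_m^\G\dgmod$ (or the derived categories of Section \ref{sec:action}) and invoking a ``non-degeneracy of $\End_{\H^\G}(\id)$ extractable from the proof of Theorem \ref{thm:main1}.'' No such fact is available: Theorem \ref{thm:main1} is a statement about Grothendieck groups and says nothing about faithfulness on $2$-morphisms, the structure of $\End_{\H^\G}(\id)$ is itself only conjecturally described in the paper (the conjectural free symmetric algebra on the classes $c_k^i$ and $c_k$), and the paper explicitly notes that the induced map $\End_{\H^\G}(\id_n)\to HH^*(X_\G^{[n]})$ is \emph{neither injective nor surjective}, so the constructed $2$-representations demonstrably do not separate closed diagrams. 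Consequently an action on a concrete module category can at best show linear independence of the \emph{images} of your spanning set, not independence in $\End_{\H^\G}(P_i^n)$ over $\End_{\H^\G}(\id)$.

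The second unproved ingredient is your identification of the hollow dot $y_k$ with a Jucys--Murphy-type element in the Koszul-dual action: you only ``expect'' this, and Remark \ref{rem:injective} flags exactly this point as the hard part --- the hollow dots generate a commutative subalgebra that is not visible from the evident $B_n^\G$-action by right multiplication, and no PBW-type basis theorem for the algebra generated by $t$'s, $z$'s, $y$'s together with the closed diagrams is established anywhere. So your proposal is a reasonable research program (normal form $+$ a faithful enough $2$-representation), but both of its pillars --- computing the curl's action and proving that some representation detects $H_i^n\otimes_\k\End_{\H^\G}(\id)$ --- are precisely the open content of the conjecture, not consequences of results in the paper.
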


\subsection{The structure of $\End_{\H^\G}(\id)$} 

Conjecture \ref{conj:iso} reduces the problem of understanding $\End_{\H^\G}(P_i^{n})$ to the problem of understanding $\End_{\H^\G}(\id)$. An element of $\End_{\H^\G}(\id)$ can be simplified to a linear combination of unions of disjoint, oriented circles containing solid and hollow dots. In fact, with a little manipulation one can also write any counterclockwise oriented circle as a linear combination of products of clockwise oriented circles. 

Notice that if you have a circle labeled by $i \in I$ with a solid dot then you can split the dot into two solid dots corresponding to maps $P_i \rightarrow P_j \la 1 \ra \rightarrow P_i \la 2 \ra$ where $i,j \in I$ are joined by an edge in the Dynkin diagram. Moving one of the dots around the other side and joining back the two dots gives a circle labeled $j$ with a solid dot. Thus it is clear that the label on a circle containing a solid dot is irrelevant.  

For $i\in I$ and $k\in \N$ let
\begin{itemize}
\item $c_k^i$ be a clockwise circle labeled $i$ containing exactly $k$ hollow dots, and 
\item $c_k$ denote a clockwise circle containing one solid dot and $k$ hollow dots (the labeling of the circle $c_k$ is not important by the observation above).  
\end{itemize}
Notice that $\deg(c_k^i) = 2k+2$ and $\deg(c_k) = 2k+4$. 

\begin{conj} $\End_{\H^\G}(\id)$ is isomorphic to the symmetric algebra freely generated by $\{c_k^i\}_{k \ge 0, i \in I}$ and $\{c_k\}_{k \ge 0}$.
\end{conj}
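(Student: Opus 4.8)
The plan is to establish the claimed isomorphism by producing two maps: a surjection from the free (super)symmetric algebra $S := \Sym\left( \{c_k^i\}_{k \ge 0, i \in I} \oplus \{c_k\}_{k \ge 0} \right)$ onto $\End_{\H^\G}(\id)$, and a lower bound on the size of $\End_{\H^\G}(\id)$ matching the Hilbert series of $S$, forcing both to be isomorphisms. First I would verify surjectivity. An arbitrary element of $\End_{\H^\G}(\id)$ is a $\k$-linear combination of closed planar diagrams; using the graphical relations (\ref{eq:rel1'})--(\ref{eq:rel4'}), the fact that left-twist curls vanish, and the observation already made in the excerpt that every counterclockwise circle rewrites as a polynomial in clockwise circles, one reduces to disjoint unions of clockwise circles carrying solid and hollow dots. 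A clockwise circle with at least two solid dots is zero (two degree-two $ii$-dots on a strand vanish by (\ref{eq:rel0'})-type collision into the square of $\omega$), so each circle carries at most one solid dot; the remaining decorations are hollow dots, whose count is the only invariant because a solid dot can be split as $P_i \to P_j\la 1\ra \to P_i \la 2\ra$, slid around, and recombined to change the label freely. That any tangle of nested circles can be disentangled into a disjoint union (at the cost of lower-order terms in the number of hollow dots, handled by induction) follows from the affine-Hecke-type relations for hollow dots with crossings recorded in the excerpt. This gives the surjection $S \twoheadrightarrow \End_{\H^\G}(\id)$, and supercommutativity of the $c_k^i, c_k$ among themselves is checked directly from dot-sliding and the anticommutation of degree-one dots, so the map factors through $S$.

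Next I would prove the lower bound, which is the crux. The cleanest route is to use the 2-representations constructed earlier in the paper: $\H_\G$ (hence $\H^\G$ via $\eta$, using Proposition \ref{prop:Kgroups} and the fact that $\eta$ is essentially surjective on $1$-morphisms) acts on $\oplus_n D(A_n^\G \dgmod)$, equivalently on $\oplus_n DCoh(X_\G^{[n]})$, and also — via Section \ref{sec:koszul} — on $\oplus_n B_n^\G\dgmod$. A nonzero closed diagram in $\End_{\H^\G}(\id)$ acts as a natural transformation of the identity functor, i.e. gives an element of $\prod_n \End(\id_{D(A_n^\G\dgmod)}) = \prod_n HH^*(A_n^\G)$ (or of $\oplus_n HH^*(X_\G^{[n]})$ on the geometric side). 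I would identify the images of $c_k^i$ and $c_k$ under this action: $c_0^i$ should act on $X_\G^{[n]}$ essentially by (a multiple of) the class of the $\P^1$-component $E_i$, or more precisely by an Euler-class/tautological operator built from $\Ext^*_{X_\G}(\O_{E_i},\O_{E_i})$, while the hollow dots insert powers of a tautological first Chern class; $c_k$ with its solid dot produces the symplectic form class. One then computes in the cohomology of the Hilbert schemes $X_\G^{[n]}$ — whose structure, by Nakajima/Grojnowski, is a free (super)polynomial algebra on generators indexed by $I_\G$ and a degree parameter — that the elements $\prod (c_{k}^{i})^{a_{k,i}} \prod (c_k)^{b_k}$, or rather a suitable truncation/limit over $n$, are linearly independent. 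This matches the Hilbert series of $S$ and completes the argument.

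The main obstacle I anticipate is precisely this last independence/faithfulness statement: one needs that the composite $\End_{\H^\G}(\id) \to \prod_n HH^*(A_n^\G)$ is injective, or at least injective on the span of monomials in the $c_k^i$ and $c_k$. Unlike the situation for $1$-morphisms — where faithfulness on $K_0$ followed from faithfulness of the Fock space representation (used in Section \ref{sec:K-theory}) — here we are probing $2$-morphisms of the identity, and the Fock space itself (being a mere $K_0$-level object) sees nothing. One genuinely needs either (a) a detailed computation in the Hochschild cohomology $HH^*(A_n^\G)$, using that $A_n^\G$ is (derived) Morita equivalent to $DCoh(X_\G^{[n]})$ and that $X_\G^{[n]}$ is a smooth holomorphic-symplectic variety with well-understood cohomology, or (b) a direct combinatorial basis theorem for $\End_{\H^\G}(\id)$ proved by a normal-form/diammond-lemma argument on closed diagrams, showing the spanning set of disjoint-union-of-decorated-clockwise-circles is actually a basis with no further relations. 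Approach (b) is self-contained but requires a careful confluence check for the rewriting system (the subtle point being that circles can be nested and the hollow-dot/crossing relations produce correction terms); approach (a) is shorter but imports nontrivial geometry. I would attempt (a) first, falling back to (b), and in either case the degree bookkeeping ($\deg c_k^i = 2k+2$, $\deg c_k = 2k+4$) is the guide that tells us exactly which monomials must be present in each graded piece.
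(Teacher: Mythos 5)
There is a genuine gap: the statement you are addressing is stated in the paper as an open conjecture (Section \ref{sec:open}), and your proposal does not close it. The first half of your plan --- reducing an arbitrary closed diagram to a linear combination of disjoint unions of clockwise circles carrying at most one solid dot and some hollow dots, with the circle label irrelevant once a solid dot is present --- is essentially the informal reduction the paper itself already records immediately before the conjecture; it yields (at best) a spanning set, i.e.\ a surjection from the free symmetric algebra, and even there the confluence issues you mention (nested circles, correction terms from the hollow-dot/crossing relations, and the need to know that the square of an $ii$ dot vanishes as a formal consequence of the listed relations rather than only in the 2-representations) are waved at rather than verified. The actual content of the conjecture is the linear independence of the monomials in the $c_k^i$ and $c_k$, and this is exactly the step your proposal leaves as a choice between two unexecuted strategies.

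Neither strategy is carried out, and strategy (a) runs into an obstruction the paper explicitly flags: the ring map $\End_{\H^\G}(\id_n) \rightarrow HH^*(X_\G^{[n]})$ of (\ref{eq:HHmap}) is neither injective nor surjective, so faithfulness of the action on $2$-endomorphisms of the identity is precisely what is unknown; you offer no argument that assembling these maps over all $n$ (or passing to the Koszul-dual action on $\oplus_n B_n^\G\dgmod$) restores injectivity, nor do you compute the images of $c_k^i$ and $c_k$ in $HH^*(A_n^\G)$, so no independence statement is ever established on the target side. (Your proposed identifications --- $c_0^i$ acting by a class built from $\Ext^*_{X_\G}(\O_{E_i},\O_{E_i})$, hollow dots inserting tautological Chern classes --- are plausible guesses but are not derived from the definitions of the natural transformations in Sections \ref{sec:action} or \ref{sec:koszul}.) Strategy (b), a diamond-lemma/normal-form basis theorem for closed diagrams, is only named. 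As it stands, the proposal is a reasonable research plan whose hard step coincides with the open problem itself, not a proof.
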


\subsection{Hochschild cohomology of Hilbert schemes}

Let us work over $\k=\C$. Our categorical action of $\H^\G$ on $\oplus_n DCoh(X_\G^{[n]})$ gives a degree preserving map of rings
\begin{equation}\label{eq:HHmap}
\End_{\H^\G}(\id_n) \rightarrow HH^*(X_\G^{[n]})
\end{equation}
where $\id_n$ is the identity functor of the region labeled by $n \in \Z$ and $HH^*$ denotes the Hochschild cohomology. Unfortunately, this map is neither injective nor surjective. For example, $HH^1(X_\G)$ is nonzero because $X_\G$ carries nontrivial vector fields but $\End_{\H^\G}(\id_n)$ is zero in odd degrees. 

On the other hand, one can consider ``cyclotomic quotients'' of $\End_{\H^\G}(\id_n)$ much like those of quiver Hecke algebras considered in \cite{KL1,KL2,KL3,R}. For example, one such quotient $\overline{\End}_{\H^\G}(\id_n)$ is defined as the quotient by the ideal generated by diagrams which containing a region labeled by an integer $n < 0$.

{\bf Example.} $\overline{\End}_{\H^\G}(\id_0) \cong \C$ essentially because any counterclockwise circle is zero or the indentity and any other diagram will contain a region labeled by some $n < 0$. 

{\bf Example.} Likewise we have 
$$\overline{\End}_{\H^\G}(\id_1) \cong 
\begin{cases} 
\C \text{ in degree } 0, 4 \\ 
\C^{|\G|} \text{ in degree } 2 \\
0 \text{ in all other degrees}
\end{cases}$$
where the degree two maps are spanned by clockwise circles labeled some $i \in I_\G$. The product of any two such circles is equal to a clockwise circle containing a solid dot (this spans the space of degree 4 maps). 

The map (\ref{eq:HHmap}) factors through $\overline{\End}_{\H^\G}(\id_n)$. One would like to know if the resulting morphism has any interesting properties. Likewise, one can ask this same question if you replace $DCoh(X_\G^{[n]})$ by the category $B_n^\G \dgmod$ from section \ref{sec:koszul}. 

\appendix

\section{The case $\G=\Z_2$}\label{sec:z2}

The case $\G=\Z_2$ differs slightly from other non-trivial $\G \subset SL_2(\C)$ and the main definitions in the body of the paper need to be modified slightly. We collect the appropriate definitions for $\G=\Z_2$ in this section. Having made these definitions all of the results and proofs in the body of the paper carry over to $\G=\Z_2$ without difficulty. 

The vertex set $I_{\Z_2}=\{0,1\}$ consists of the two distinct isomorphism classes of representations of $\Z_2$, the trivial representation and the sign representation.  The quantum Cartan matrix $C_{\Z_2}$ is the matrix with entries $\la i,j \ra$ indexed by $i,j \in I_{\Z_2}$ given by 
$$ \la i,j \ra =
\begin{cases}
        t + t^{-1} & \text{ if } i = j \\
        -2 & \text{ if } i \ne j.
\end{cases} $$
The Heisenberg algebra $\h_{\Z_2}$ is the unital infinite dimensional $\k[t,t^{-1}]$ algebra with generators $p_i^{(n)}, q_i^{(n)}$ for $i \in I_{\Z_2}$ and integers $n \ge 0$, with relations
$$p_i^{(n)} p_j^{(m)} = p_j^{(m)} p_i^{(n)} \text{ and } q_i^{(n)}q_j^{(m)} = q_j^{(m)} q_i^{(n)} \text{ for any } i,j \in I_{\Z_2}$$
$$q_i^{(n)} p_j^{(m)} =  
\begin{cases}
\sum_{k \ge 0} [k+1] p_i^{(m-k)} q_i^{(n-k)} \text{ if } i=j \\
p_j^{(m)} q_i^{(n)} + 2 p_j^{(m-1)} q_i^{(n-1)} + p_j^{(m-2)} q_i^{(n-2)} \text{ if } i \ne j.
\end{cases}$$

The definition of the 2-category $\H'_{\Z_2}$ is the same as before (and $\H_{\Z_2}$ is still defined as the Karoubi envelope of $\H'_{\Z_2}$).

On the other hand, ${\H'}^{\Z_2}$ is slightly different. We fix an orientation $\epsilon$ of the Dynkin diagram. Since there are two edges connecting the two vertices $i$ and $j$ we describe the orientation $\epsilon$ by two functions $\epsilon_{ij}(0)$ and $\epsilon_{ij}(1)$. More precisely, $\epsilon_{ij}(0) = \pm 1$ depending on whether the first edge is oriented from $i$ to $j$ or from $j$ to $i$. Similarly, $\epsilon_{ij}(1)$ describes the orientation of the second edge.

As in the case $\G \neq \Z_2$ the objects of ${\H'}^{\Z_2}$ are indexed by the integers $\Z$ while the 1-morphisms are generated by $P_i$ and $Q_i$ ($i \in I_{\Z_2}$). 

The space of 2-morphisms is the $\k$-algebra generated by suitable planar diagrams modulo local relations. As before, the diagrams consist of oriented compact one-manifolds immersed into the plane strip $\R \times [0,1]$ (possibly with crossings) modulo isotopies fixing the boundary. 

Each such strand is labeled by some $i \in I_{\Z_2}$. It can carry a solid dot as before which represents a map $X: P_i \rightarrow P_i \la 2 \ra$. These $ii$ dots move freely on strands and through crossings just as in the case $\G \neq \Z_2$.

The main difference is that the strands now also carry a hollow rectangle (see below). More precisely, corresponding to each of the two edges in the Dynkin diagram connecting vertices $i$ and $j$ there is a 2-morphism $X: P_i \rightarrow P_j \la 1 \ra$ (one denoted by the solid dot and the other by the hollow rectangle).
$$
\begin{tikzpicture}[>=stealth]
\draw (0,0) -- (0,1) [->][very thick];
\draw (0,-.25) node {$i$};
\filldraw [blue] (0,.5) circle (2pt);
\draw (0,1.25) node {$j$};
\draw (2,0) -- (2,1) [->][very thick];
\draw (2,-.25) node {$i$};
\path (2,.5) node [shape=rectangle,blue, draw]{};
\draw (2,1.25) node {$j$};
\end{tikzpicture}
$$
These dots and rectangles are also allowed to move freely along the one-manifold along cups and caps and through crossings.  Dots or  rectangles on distinct strands anticommute when they move past each other relative to the vertical. The relations which govern compositions are as follows: 
\begin{equation}
\begin{tikzpicture}[>=stealth]
\draw (0,0) -- (0,2) [->][very thick];
\draw (0,-.25) node {$i$};
\filldraw [blue] (0,.66) circle (2pt);
\draw (-.25,1) node {$j$};
\filldraw [blue] (0,1.33) circle (2pt);
\draw (0,2.25) node {$i$};
\draw (.5,1) node {$=$};
\draw (1.33,1) node {$\epsilon_{ij}(0)$} ;
\draw (2,0) -- (2,2) [->][very thick];
\draw (2,-.25) node {$i$};
\filldraw [blue] (2,1) circle (2pt);
\draw (2,2.25) node {$i$};

\draw (5,0) -- (5,2) [->][very thick];
\draw (5,-.25) node {$i$};
\path (5,.66) node [shape=rectangle,blue, draw]{};
\draw (4.75,1) node {$j$};
\path (5,1.33) node [shape=rectangle,blue, draw]{};
\draw (5,2.25) node {$i$};
\draw (5.5,1) node {$=$};
\draw (6.33,1) node {$\epsilon_{ij}(1)$} ;
\draw (7,0) -- (7,2) [->][very thick];
\draw (7,-.25) node {$i$};
\filldraw [blue] (7,1) circle (2pt);
\draw (7,2.25) node {$i$};

\draw (10,0) -- (10,2) [->][very thick];
\draw (10,-.25) node {$i$};
\filldraw [blue] (10,.66) circle (2pt);
\draw (9.75,1) node {$j$};
\path (10,1.33) node [shape=rectangle,blue, draw]{};
\draw (10,2.25) node {$i$};
\draw (11,1) node {$=$};
\draw (12,0) -- (12,2) [->][very thick];
\draw (12,-.25) node {$i$};
\filldraw [blue] (12,1.33) circle (2pt);
\draw (11.75,1) node {$j$};
\path (12,.66) node [shape=rectangle,blue, draw]{};
\draw (12,2.25) node {$i$};
\draw (12.5,1) node {$=$};
\draw (13,1) node {$0.$};
\end{tikzpicture}
\end{equation}
Also, the composition of an $ii$ dot with any other dot or rectangle is zero. 

Next, we have the same relations (\ref{eq:rel1'}) and (\ref{eq:rel2'}) as well as (\ref{eq:rel4'}). The only more interesting relation is (\ref{eq:rel3'}) which is replaced by 
\begin{equation}\label{eq:rel3''}
\begin{tikzpicture}[>=stealth]
\draw (0,0) .. controls (1,1) .. (0,2)[<-][very thick];
\draw (1,0) .. controls (0,1) .. (1,2)[->] [very thick];
\draw (0,-.25) node {$i$};
\draw (1,-.25) node {$j$};
\draw (1.5,1) node {=};
\draw (2,0) --(2,2)[<-][very thick];
\draw (3,0) -- (3,2)[->][very thick];
\draw (2,-.25) node {$i$};
\draw (3,-.25) node {$j$};
\draw (3.75,1) node {$-\ \epsilon_{ij}(0)$};

\draw (4,1.75) arc (180:360:.5) [very thick];
\draw (4,2) -- (4,1.75) [very thick];
\draw (5,2) -- (5,1.75) [very thick][<-];
\draw (5,.25) arc (0:180:.5) [very thick];
\filldraw [blue] (4.5,1.25) circle (2pt);
\filldraw [blue] (4.5,0.75) circle (2pt);
\draw (5,0) -- (5,.25) [very thick];
\draw (4,0) -- (4,.25) [very thick][<-];
\draw (4,-.25) node {$i$};
\draw (5,-.25) node {$j$};
\draw (4,2.25) node {$i$};
\draw (5,2.25) node {$j$};
\draw (5.75,1) node {$-\ \epsilon_{ij}(1)$};

\draw (6,1.75) arc (180:360:.5) [very thick];
\draw (6,2) -- (6,1.75) [very thick];
\draw (7,2) -- (7,1.75) [very thick][<-];
\draw (7,.25) arc (0:180:.5) [very thick];
\path (6.5,1.25) node [shape=rectangle,blue, draw]{};
\path (6.5,.75) node [shape=rectangle,blue, draw]{};
\draw (7,0) -- (7,.25) [very thick];
\draw (6,0) -- (6,.25) [very thick][<-];
\draw (6,-.25) node {$i$};
\draw (7,-.25) node {$j$};
\draw (6,2.25) node {$i$};
\draw (7,2.25) node {$j$};
\end{tikzpicture}
\end{equation}

Finally, we assign a $\Z$ grading on the space of planar diagrams like before. In particular, we define the degree of both an $ij$ dot and an $ij$ rectangle to be one when $i \ne j$ and the degree of an $ii$ dot to be two. Equipped with these assignments all the graphical relations are graded, providing a $\Z$ grading on ${\H'}^{\Z_2}$. We then define $\H^{\Z_2}$ to be the Karoubi envelope of ${\H'}^{\Z_2}$.

\end{document}